\numberwithin{theorem}{section}
\newcommand{\TheTitle}{Subgradient-based approach for MFS$_C$ problem}
\newcommand{\TheAuthors}{Minglu Ye and Ting Kei Pong}
\headers{\TheTitle}{\TheAuthors}
\title{A subgradient-based approach for finding the maximum feasible subsystem with respect to a set}
\author{
Minglu Ye \thanks{Department of Mathematics and Information, China West Normal University, Nanchong, Sichuan, China.
\email{yml2002cn@aliyun.com}.}
\and Ting Kei Pong \thanks{Department of Applied Mathematics, the Hong Kong Polytechnic University, Hong Kong.
This author was supported partly by Hong Kong Research Grants Council PolyU153005/17p. \email{tk.pong@polyu.edu.hk}.}
}
\newtheorem{example}{Example}
\newcommand{\inner}[2]{\langle#1,#2\rangle}
\newcommand{\norm}[1]{\|#1\|}
\def\R{{\rm I\!R}}
\def\F{{\frak M}}
\def\R{{\mathbb R}}
\def\dist{{\rm dist}}
\def\argmin{\mathop{\rm arg\,min}}
\def\EAS{{EAS$_{\mbox{\tiny{\rm MFS}$_C$}}$}}
\def\GP{{sGP$_{ls}$}}
\begin{document}

\maketitle

\begin{abstract}
  We propose a subgradient-based method for finding the maximum feasible subsystem in a collection of closed sets with respect to a given closed set $C$ (MFS$_C$). In this method, we reformulate the MFS$_C$ problem as an $\ell_0$ optimization problem and construct a sequence of continuous optimization problems to approximate it. The objective of each approximation problem is the sum of the composition of a nonnegative nondecreasing continuously differentiable concave function with the squared distance function to a closed set. Although this objective function is nonsmooth in general, a subgradient can be obtained in terms of the projections onto the closed sets. Based on this observation, we adapt a subgradient projection method to solve these approximation problems. Unlike classical subgradient methods, the convergence (clustering to stationary points) of our subgradient method is guaranteed with a {\em nondiminishing stepsize} under mild assumptions. This allows us to further study the sequential convergence of the subgradient method under suitable Kurdyka-{\L}ojasiewicz assumptions. Finally, we illustrate our algorithm numerically for solving the MFS$_C$ problems on a collection of halfspaces and a collection of unions of halfspaces, respectively, with respect to the set of $s$-sparse vectors.
\end{abstract}

\begin{keywords}
  Maximum feasible subsystem, subgradient methods, Kurdyka-{\L}ojasiewicz property
\end{keywords}

\begin{AMS}
  90C06, 90C26, 90C30, 90C90
\end{AMS}

\section{Introduction}

Let $\{C,D_1,\ldots,D_m\}$ be a collection of finitely many nonempty (possibly nonconvex) closed sets in $\R^n$. We consider the problem of finding the {\em maximum feasible subsystem with respect to $C$} (MFS$_C$):
\begin{equation}\label{MFSC}
\begin{array}{rl}
  \max & \#(I)\\
  {\rm s.t.}& C\cap \bigcap_{i\in I}D_i\neq \emptyset,\\
  & I \subseteq \{1,\ldots,m\},
\end{array}
\end{equation}
where $\#(I)$ stands for the cardinality of the index set $I$. The above problem is a natural generalization of the widely studied problem of finding the {\em maximum feasible (linear) subsystem} (MF$\ell$S), which is just \eqref{MFSC} with $C= \R^n$ and $D_i$ being halfspaces and is known to be NP hard; see \cite{chakravarti94}. The MF$\ell$S problem finds applications in many different areas such as image and signal processing \cite{Amaldi1999}, operations research \cite{Amaldi1995,Amaldi1998}, machine learning \cite{Amaldi03} and linear programming \cite{Chinneck96,Chinneck01,Greenberg91}, and various solution methods have been proposed. Many of these methods are based on integer programming techniques and exploit explicitly the fact that each $D_i$ is a halfspace and $C= \R^n$; see, for example, \cite{Parker95,PRyan96,Pfetsch02,Pfetsch08} and references therein. For instance, the recently proposed branch-and-cut method in \cite{Pfetsch08} builds on the classical branch-and-cut approach for integer programming: it exploits the duality between MF$\ell$S and the problem of finding the minimum {\em irreducible infeasible subsystem} (IIS) cover, and makes use of the structure of a special kind of polytope to identify IIS covers; see Sections~3.1 and 3.2 of \cite{Pfetsch08}. Thus, when it comes to the MFS$_C$ problem \eqref{MFSC}, it is not clear whether the method in \cite{Pfetsch08} and other existing methods for the MF$\ell$S problem can be readily generalized to solve \eqref{MFSC} for general sets $C$ and $D_i$, which are possibly nonconvex.

In this paper, we develop a new approach for approximately solving the MFS$_C$ problem \eqref{MFSC}. Our method takes advantage of the recent advancement in $\ell_0$ minimization such as \cite{Zhao12}, and is based on the observation that the MFS$_C$ problem \eqref{MFSC} is equivalent to the following nonlinear programming problem with an $\ell_0$ objective:
\begin{align}\label{sum_zero_norm}
\min_{x\in C}\ \Phi_0(x) := \sum_{i=1}^{m}|d_{D_i}^2(x)|_0
\end{align}
where $|\cdot|_0$ is the $\ell_0$ norm.\footnote{This equivalence can be easily deduced by noting that if $I_*$ solves \eqref{MFSC}, then the solution set of \eqref{sum_zero_norm} is $C\cap \bigcap_{i\in I_*}D_i$, and that if $x^*$ solves \eqref{sum_zero_norm}, then a solution $I_*$ of \eqref{MFSC} is given by $I_* = \{i:\; \dist (x^*,D_i) = 0\}$.} In our approach, as in \cite{Zhao12}, we approximate the $\ell_0$ norm in $\Phi_0$ by a sequence of continuous functions. We show that if the sequence of continuous functions $\{\varphi_{\epsilon_k}\}$ is chosen in such a way that it is {\em both} epi-convergent and pointwise convergent to the $\ell_0$ norm, then the sequence of functions
\begin{equation}\label{Phi_epsilon}
\Phi_{\epsilon_k}(x) := \sum_{i=1}^{m}\varphi_{\epsilon_k}(d_{D_i}^2(x))
\end{equation}
epi-converges and pointwise converges to $\Phi_0$. We then explore how to minimize $\Phi_{\epsilon_k}$ over $C$, under further differentiability and concavity assumptions on $\varphi_{\epsilon_k}$ (see Section~\ref{sec4} for the assumptions and concrete examples of $\{\varphi_{\epsilon_k}\}$). Notice that the function $\Phi_{\epsilon_k}$, though continuous, is still possibly nonsmooth in general: this is because the function $x\mapsto d_{D_i}^2(x)$ is differentiable if and only if $D_i$ is convex. Fortunately, a subgradient of the squared distance function to $D_i$ can be obtained in terms of the projections onto $D_i$.
We thus propose a subgradient projection method for minimizing the $\Phi_{\epsilon_k}$ in \eqref{Phi_epsilon} over $C$. Surprisingly, we are able to show that the projected subgradient direction is indeed a {\em descent direction}. This enables us to incorporate the standard nonmonotone line-search scheme to empirically accelerate the algorithm. Moreover, under mild assumptions on the collection of closed sets, we show that the stepsizes used are uniformly bounded away from zero and that any accumulation point of the sequence generated by our subgradient projection method is a stationary point of $\Phi_{\epsilon_k} + \delta_C$.\footnote{As a corollary, under mild assumptions on the collection of closed sets, we establish that the sequence generated by the averaged projection method clusters at stationary points of a suitable potential function; see Corollary \ref{coro:aver}.} Based on these and some suitable Kurdyka-{\L}ojasiewicz (KL) assumptions, we further show that the whole sequence generated by our method (with monotone linesearch) for minimizing the $\Phi_{\epsilon_k}$ in \eqref{Phi_epsilon} over $C$ is convergent to a stationary point of $\Phi_{\epsilon_k}+\delta_C$ when each $D_i$ is convex or $C = \R^n$. We also establish a relationship between the different KL assumptions considered. Finally, we perform numerical experiments on \eqref{MFSC} under two different scenarios: we consider $C$ being the set of $s$-sparse vectors, and $\{D_1,\ldots,D_m\}$ being either a collection of halfspaces or a collection of unions of halfspaces. Our experiments on random instances show that our approach is able to identify a reasonably large feasible subsystem with respect to $C$ in a reasonable period of time, even for large-scale problems.

The rest of this paper is organized as follows. We introduce notation and preliminary materials in Section~\ref{sec2}. An approximation scheme for solving \eqref{MFSC} based on approximately minimizing a bunch of $\Phi_{\epsilon_k}$ in \eqref{Phi_epsilon} over $C$ is introduced in Section~\ref{sec3}. We then propose and study in Section~\ref{sec4} a subgradient method for minimizing $\Phi_{\epsilon_k}$ in \eqref{Phi_epsilon} over $C$ and show that the sequence generated clusters at a stationary point of $\Phi_{\epsilon_k}+\delta_C$ under mild assumptions on the collection of closed sets and some further differentiability and concavity assumptions on $\varphi_{\epsilon_k}$. Sequential convergence is studied in Section~\ref{sec5} under additional KL assumptions. Finally, numerical experiments are presented in Section~\ref{sec6}.
%
%
%
%
%

\section{Notation and preliminaries}\label{sec2}

We let $\R^n$ denote the $n$-dimensional Euclidean space and let $\inner{\cdot}{\cdot}$ and $\norm{\cdot}$ denote the standard inner product and the induced norm, respectively. The nonnegative orthant and positive orthant are denoted by $\mathbb{R}^n_{+}$ and $\mathbb{R}^n_{++}$, respectively. For an $x\in \R^n$, we let $\|x\|_0$ denote the $\ell_0$ norm of $x$, and let $\mathbf{B}(x,r)$ denote the closed ball centered at $x$ with radius $r$. Moreover, for an $s\in \R$, we use $|s|_0$ to denote its $\ell_0$ norm and $[s]_+ := \max\{s,0\}$ to denote its positive part.

Let $C\subseteq \R^n$ be a nonempty closed set. We let $P_C(x)$ denote the set of projections of a vector $x\in \R^n$ onto $C$: this set is always nonempty, and is a singleton when $C$ is in addition convex. The distance to $C$ from $x$ is denoted by $\dist (x,C)$ or $d_C(x)$, and we use $C^\infty$ to denote the horizon cone of $C$, which is defined as
$C^{\infty}:=\{x: \exists~ x^t\in C,\ \lambda_t\downarrow 0 \text{~with~} \lambda_t x^t\rightarrow x\}$.\footnote{We recall from \cite[Theorem~3.5]{Rock_wets98} that $C^\infty = \{0\}$ if and only if $C$ is bounded.}
Finally, we let $\delta_C$ denote the indicator function of $C$, which is defined as
\begin{equation*}
\delta_C(x)=
\begin{cases}
0       & \text{~if~}x\in C,\\
+\infty & \text{otherwise}.
\end{cases}
\end{equation*}

For an extended-real-valued function $f:\R^n\to [-\infty,\infty]$, we let ${\rm dom}\,f:=\{x\in \R^n:\; f(x) < \infty\}$. Such a function is said to be proper if ${\rm dom}\,f\neq \emptyset$ and $f$ is never $-\infty$, and is said to be closed if $f$ is lower semicontinuous. For a proper closed function $f$, the regular subgradient and (limiting) subgradient \cite[Definition 8.3]{Rock_wets98} at a point $\bar{x}\in {\rm dom}\,f$ are defined respectively as
\begin{align*}
&\widehat{\partial} f(\bar{x}): = \left\{v :\; \liminf_{x\rightarrow \bar{x},\,x\neq \bar{x}}\frac{f(x)-f(\bar{x})-\inner{v}{x-\bar{x}}}{\norm{x-\bar{x}}}  \geq 0 \right\},\\
&\partial f(\bar{x}): = \left\{v :\; \exists x^t\xrightarrow{f}\bar x \text{~and~} v^t\in \widehat{\partial} f(x^t)\text{~with~}v^t\rightarrow v\right\},
\end{align*}
where $x^t\xrightarrow {f} \bar x $ means both $x^t\rightarrow  \bar x $ and $f(x^t)\rightarrow f(\bar x)$. We define $\partial f(x) = \widehat\partial f(x) = \emptyset$ whenever $x\notin {\rm dom}\,f$ by convention, and write ${\rm dom}\partial f:= \{x\in \R^n:\; \partial f(x)\neq \emptyset\}$. Clearly, we have $\widehat{\partial} f(\bar{x}) \subseteq \partial f(\bar{x})$. It is known that $\partial f$ reduces to the classical subdifferential in convex analysis if $f$ is in addition convex \cite[Proposition~8.12]{Rock_wets98}, and we have $\partial f(x) = \{\nabla f(x)\}$ if $f$ is continuously differentiable at $x$. We also define
the normal cone of a nonempty closed set $C$ at $x \in C$ as $N_C(x) = \partial \delta_C (x)$.
Finally, for a positive number $\nu$, we denote the set of $\nu$-minimizers of $f$ by $\nu$-$\argmin f$, i.e., $\nu$-$\argmin f:= \{x:f(x)\leq \inf f + \nu\}$.
The set of minimizers of $f$ is denoted by $\argmin f$.

We next recall the Kurdyka-{\L}ojasiewicz (KL) property, which is an important property for analyzing convergence of first-order methods; see, for example, \cite{AttBolt09,AtBoReSo10,Attouch-Bolte13}. For notational simplicity, for any $\nu\in (0,\infty]$, we let $\Xi_\nu$ denote the set of continuous concave functions $\phi:[0,\nu)\to \R_+$ that are continuously differentiable on $(0,\nu)$ with positive derivatives and satisfy $\phi(0) = 0$.

\begin{definition}[{{\bf KL property \& KL function}}]\label{KL-exponenct}
Let $f$ be a proper closed function. We say that $f$ satisfies the Kurdyka-{\L}ojasiewicz (KL) property at $\hat x\in {\rm dom}\,\partial f$ if there exist a neighborhood $\cal N$ of $\hat x$, $\nu \in (0,\infty]$ and a $\phi\in \Xi_\nu$ such that
\begin{equation}\label{haha}
      \phi'(f(x) - f(\hat x))\,\dist (0,\partial f(x))\ge 1
\end{equation}
whenever $x\in {\cal N}$ and $f(\hat x)< f(x) < f(\hat x) + \nu$. If $f$ satisfies the KL property at $\hat x\in {\rm dom}\,\partial f$ and the $\phi$ in \eqref{haha} can be chosen as $\phi(s) = cs^{1-\theta}$ for some $\theta\in [0,1)$ and $c > 0$, then $f$ is said to satisfy the KL property at $\hat x$ with exponent $\theta$.

A proper closed function $f$ is called a KL function if it satisfies the KL property at every point in ${\rm dom}\,\partial f$, and is called a KL function with exponent $\theta\in[0,1)$ if it satisfies the KL property with exponent $\theta$ at every point in ${\rm dom}\,\partial f$.
\end{definition}

It can be shown that the KL property is satisfied by a large class of functions, including all proper closed semialgebraic functions. Indeed, it is known that proper closed semialgebraic functions are KL functions with exponent $\theta$ for some $\theta\in [0,1)$; see, for example, \cite[Section~4]{AtBoReSo10} and references therein. We next recall the following lemma concerning a {\em uniformized} KL property, first proved in \cite[Lemma~6]{BolteSabach14}. It was used there for establishing convergence of first-order methods for level-bounded objective functions.

\begin{lemma}[{{\bf Uniformized KL property}}]\label{uni-kl}
Let $\Omega$ be a compact set and $f$ be a proper closed function that satisfies the KL property at every point in $\Omega$ and is constant on $\Omega$, say, equals $l_*$. Then there exist $\epsilon>0$, $\nu \in (0,\infty]$ and a $\phi\in \Xi_\nu$ such that
\begin{equation*}
      \phi'(f(x) - l_*)\,\dist (0,\partial f(x))\ge 1
\end{equation*}
whenever $d_\Omega(x) < \epsilon$ and $l_* < f(x) < l_* + \nu$.
\end{lemma}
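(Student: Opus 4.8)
The plan is to uniformize the pointwise KL data over the compact set $\Omega$ by a finite covering argument, and then to splice the finitely many desingularizing functions into a single $\phi\in\Xi_\nu$ by summation. Since $f$ is constant on $\Omega$ with value $l_*$, we have $f(\bar x)=l_*$ for every $\bar x\in\Omega$, so the KL property at $\bar x$ (Definition~\ref{KL-exponenct}) furnishes a radius $\epsilon_{\bar x}>0$, a level $\nu_{\bar x}\in(0,\infty]$ and a $\phi_{\bar x}\in\Xi_{\nu_{\bar x}}$ such that
\begin{equation*}
\phi_{\bar x}'(f(x) - l_*)\,\dist(0, \partial f(x)) \ge 1
\end{equation*}
whenever $x\in {\rm int}\,\mathbf{B}(\bar x,\epsilon_{\bar x})$ and $l_* < f(x) < l_* + \nu_{\bar x}$. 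The open balls $\{{\rm int}\,\mathbf{B}(\bar x,\epsilon_{\bar x})\}_{\bar x\in\Omega}$ cover the compact set $\Omega$, so I extract a finite subcover centered at points $x_1,\dots,x_p\in\Omega$, with associated data $\epsilon_i:=\epsilon_{x_i}$, $\nu_i:=\nu_{x_i}$ and $\phi_i:=\phi_{x_i}$.

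Next I would produce the uniform tube radius $\epsilon$ and level $\nu$. I set $\nu:=\min_{1\le i\le p}\nu_i>0$, and claim there is an $\epsilon>0$ with $\{x:\; d_\Omega(x)<\epsilon\}\subseteq\bigcup_{i=1}^p {\rm int}\,\mathbf{B}(x_i,\epsilon_i)$. This is a standard compactness (tube-lemma) argument: if no such $\epsilon$ existed, then for each $k$ there would be $y_k$ with $d_\Omega(y_k)<1/k$ lying outside the (open) union; boundedness of $\Omega$ forces $\{y_k\}$ to be bounded, so along a subsequence $y_k\to\bar y$, and $d_\Omega(\bar y)=0$ together with closedness of $\Omega$ gives $\bar y\in\Omega$, whence $\bar y$ lies in the open union and so does $y_k$ for all large $k$, a contradiction.

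Finally I combine the desingularizing functions by defining $\phi:=\sum_{i=1}^p\phi_i$. Since each $\phi_i$, restricted to $[0,\nu)$, belongs to $\Xi_\nu$, the sum $\phi$ is continuous concave on $[0,\nu)$, is continuously differentiable on $(0,\nu)$, satisfies $\phi(0)=0$, and has $\phi'=\sum_i\phi_i'>0$; hence $\phi\in\Xi_\nu$. Now take any $x$ with $d_\Omega(x)<\epsilon$ and $l_*<f(x)<l_*+\nu$. By the tube inclusion, $x\in{\rm int}\,\mathbf{B}(x_j,\epsilon_j)$ for some $j$, and since $\nu\le\nu_j$ the bound $l_*<f(x)<l_*+\nu_j$ holds, so the KL inequality at $x_j$ yields $\phi_j'(f(x)-l_*)\,\dist(0,\partial f(x))\ge 1$. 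Because every $\phi_i'$ is positive and $\dist(0,\partial f(x))\ge 0$, we conclude $\phi'(f(x)-l_*)\,\dist(0,\partial f(x))\ge\phi_j'(f(x)-l_*)\,\dist(0,\partial f(x))\ge 1$, which is the asserted uniform bound. The only genuinely delicate point is the tube argument producing a single $\epsilon$ valid across all of $\Omega$; the summation trick for $\phi$ is what lets me reconcile the incompatible individual levels while preserving concavity and positivity of the derivative.
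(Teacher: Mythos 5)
Your proof is correct: the finite-subcover argument over the compact set $\Omega$, the tube lemma producing a single $\epsilon$, and the summation $\phi=\sum_i\phi_i$ (which preserves membership in $\Xi_\nu$ and dominates each $\phi_j'$) together give exactly the asserted uniform inequality. The paper does not prove this lemma itself but cites \cite[Lemma~6]{BolteSabach14}, and your argument is precisely the standard proof given there, so there is nothing to reconcile.
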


%
%

Finally, we prove that a certain sequence of function is equi-lsc. This will be used in Section~\ref{sec3} to construct an explicit example of sequence $\{\Phi_{\epsilon_t}\}$ (as in \eqref{Phi_epsilon}) that epi-converges and pointwise converges to $\Phi_0$ in \eqref{sum_zero_norm}. We first recall the following definition; see \cite[Page~248]{Rock_wets98}.

\begin{definition}\label{def:equi_lsc}
Let $\{f_{t}\}$ be a sequence of functions. We say that $\{f_{t}\}$ is equi-lsc at $x_0$ if for every $\epsilon >0$ and $\rho > 0$ there exists $\delta >0$ such that
\[
f_{t}(x)\geq \min\{f_{t}(x_0)-\epsilon,\rho\}\text{~for all~}t \text{~and~} \|x-x_0\|\leq \delta.
\]
We say that $\{f_{t}\}$ is equi-lsc on $\mathbb{R}^n$ if $\{f_{t}\}$ is equi-lsc at every point $x_0 \in \mathbb{R}^n$.
\end{definition}

\begin{lemma}\label{equi-lsc}
Let $\{\epsilon_{t}\}$ be a decreasing positive sequence with $\epsilon_{t}\downarrow 0$ and define $\varphi_{\epsilon_{t}}(s) =1-\frac{\log(|s|+\epsilon_{t})}{\log\epsilon_{t}}$. Then the sequence of functions $\{\varphi_{\epsilon_{t}}\}$ is equi-lsc on $\mathbb{R}$.
\end{lemma}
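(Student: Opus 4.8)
The plan is to verify Definition~\ref{def:equi_lsc} at each point $x_0\in\R$ separately, treating $x_0=0$ and $x_0\neq 0$ differently; the whole point is that the two potential difficulties---the exploding slopes near the origin and the uniformity over $t$---never occur together. First I would record a few elementary facts. Since $\{\epsilon_t\}$ is decreasing to $0$ and the functions are well defined only when $\epsilon_t\in(0,1)$ (so that $\log\epsilon_t<0$), we have $\epsilon_t\le\epsilon_1<1$ for all $t$, and hence $|\log\epsilon_t|=-\log\epsilon_t\ge -\log\epsilon_1=|\log\epsilon_1|>0$. Moreover, because $|s|+\epsilon_t\ge\epsilon_t$ gives $\log(|s|+\epsilon_t)\ge\log\epsilon_t$ and we divide by the negative number $\log\epsilon_t$, each $\varphi_{\epsilon_t}$ satisfies $\varphi_{\epsilon_t}(s)\ge 0$ with $\varphi_{\epsilon_t}(0)=0$; it is even in $s$ and, for $s\neq 0$, continuously differentiable with $|\varphi_{\epsilon_t}'(s)|=\frac{1}{(|s|+\epsilon_t)\,|\log\epsilon_t|}$.

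The case $x_0=0$ is then immediate. Since $f_t(x_0)=\varphi_{\epsilon_t}(0)=0$, for any $\epsilon>0$ and $\rho>0$ the target lower bound is $\min\{-\epsilon,\rho\}=-\epsilon<0$, while $\varphi_{\epsilon_t}(x)\ge 0>-\epsilon$ holds for every $x$ and every $t$. Hence any $\delta>0$ works, and no control of the (unbounded as $\epsilon_t\downarrow0$) slope near the origin is needed---precisely because the value $\varphi_{\epsilon_t}(0)$ is already the global minimum, so the lower-semicontinuity requirement is vacuous there.

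For $x_0\neq 0$ the idea is to keep the admissible points bounded away from the origin, where the derivative estimate above becomes uniform in $t$. Imposing $\delta\le|x_0|/2$ forces every $x$ with $|x-x_0|\le\delta$ to have the same sign as $x_0$ and to satisfy $|x|\ge|x_0|/2$; on the corresponding interval $\varphi_{\epsilon_t}$ is smooth, and combining $|s|+\epsilon_t\ge|s|\ge|x_0|/2$ with $|\log\epsilon_t|\ge|\log\epsilon_1|$ yields $|\varphi_{\epsilon_t}'(s)|\le \frac{2}{|x_0|\,|\log\epsilon_1|}=:M$, a constant independent of $t$. By the mean value theorem $|\varphi_{\epsilon_t}(x)-\varphi_{\epsilon_t}(x_0)|\le M|x-x_0|\le M\delta$, so choosing $\delta=\min\{|x_0|/2,\ \epsilon/M\}$ gives $\varphi_{\epsilon_t}(x)\ge\varphi_{\epsilon_t}(x_0)-\epsilon\ge\min\{f_t(x_0)-\epsilon,\rho\}$ for all $t$ and all $|x-x_0|\le\delta$, as required.

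The only genuinely delicate point is the uniformity in $t$, and the previous paragraph isolates exactly why it holds: the derivative $\frac{1}{(|s|+\epsilon_t)|\log\epsilon_t|}$ grows only as $|s|\downarrow 0$ or as $\epsilon_t\downarrow 0$, but restricting to $|x|\ge|x_0|/2$ removes the first source while the monotonicity $|\log\epsilon_t|\ge|\log\epsilon_1|$ removes the second. I therefore expect the write-up to be short; the main things to get right are to confirm that the chosen $\delta$ keeps $x$ strictly on one side of $0$ so that the smooth derivative formula and the Lipschitz estimate are valid, and to observe that $\delta$ may be taken independent of $\rho$ because we obtain the stronger bound $\varphi_{\epsilon_t}(x)\ge\varphi_{\epsilon_t}(x_0)-\epsilon$ directly.
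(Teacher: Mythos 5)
Your proof is correct, but it proceeds quite differently from the paper's. The paper argues by contradiction: it assumes equi-lsc fails at some $s_0$, extracts sequences $t_j$ and $s_j\to s_0$ violating the definition, disposes of the case of bounded $t_j$ by continuity of a single $\varphi_{\epsilon_{N_0}}$, and then for $t_j\to\infty$ splits into $s_0\neq 0$ (where both $\varphi_{\epsilon_{t_j}}(s_j)$ and $\varphi_{\epsilon_{t_j}}(s_0)$ tend to $1$, so the gap closes) and $s_0=0$ (where $\liminf_j\varphi_{\epsilon_{t_j}}(s_j)\ge 0$ while the right-hand side is $-\epsilon_0<0$). You instead give a direct verification with an explicit modulus: the same observation that $\varphi_{\epsilon_t}\ge 0=\varphi_{\epsilon_t}(0)$ makes the origin trivial, and away from the origin you get a Lipschitz constant $M=\tfrac{2}{|x_0|\,|\log\epsilon_1|}$ uniform in $t$, hence an explicit $\delta=\min\{|x_0|/2,\epsilon/M\}$. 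Your route is more quantitative and arguably cleaner (it yields equi-continuity, not just equi-lsc, on compact sets avoiding $0$), at the cost of invoking differentiability; the paper's soft limit argument avoids any derivative computation. One small caveat: the lemma as stated only assumes $\epsilon_t\downarrow 0$, whereas your uniform bound $|\log\epsilon_t|\ge|\log\epsilon_1|$ and the inequality $\varphi_{\epsilon_t}\ge 0$ both use $\epsilon_1<1$. This is harmless --- at most finitely many indices have $\epsilon_t\ge 1$ (and $\epsilon_t=1$ is excluded since $\varphi_{\epsilon_t}$ would be undefined), and any finite family of continuous functions is equi-lsc, so you can run your argument on the tail $\{t: \epsilon_t<1\}$ and shrink $\delta$ once more for the exceptional indices --- but it deserves a sentence in the write-up.
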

\begin{proof}
We prove by contradiction.
Let $s_0\in \R$. Suppose that $\{\varphi_{\epsilon_{t}}\}$ is not equi-lsc at $s_0$. Then we see from Definition \ref{def:equi_lsc} that there exist $\epsilon_0>0$, $\rho_0 > 0$, a sequence $\{t_j\}$, and a sequence $s_j \rightarrow s_0$ such that
\begin{align}\label{lem:asvar}
 \varphi_{\epsilon_{t_j}}(s_j) < \min\{\varphi_{\epsilon_{t_j}}(s_0) - \epsilon_0,\rho_0\} \text{~ for all ~}j.
\end{align}

If there exists $N$ such that $t_j \le N$ infinitely often, by passing to a further subsequence if necessary, we may assume that $t_j \equiv N_0$ for some integer $N_0$. But this together with \eqref{lem:asvar} contradicts the continuity of $\varphi_{\epsilon_{N_0}}$ at $s_0$.

Thus, we must have $t_j \to \infty$. We then consider the following two cases:
\begin{enumerate}[\rm(a)]
\item Suppose that $s_0\neq 0$. Then clearly $\lim_{j\rightarrow \infty} \varphi_{\epsilon_{t_j}}(s_j) = 1$ and $\lim_{j\rightarrow \infty} \varphi_{\epsilon_{t_j}}(s_0) = 1$. Hence, we have $\lim_{j\rightarrow \infty} \varphi_{\epsilon_{t_j}}(s_j) > \lim_{j\rightarrow \infty} \varphi_{\epsilon_{t_j}}(s_0) - \epsilon_0$. This contradicts \eqref{lem:asvar}.
\item Suppose that $s_0 = 0$. Using the facts that $\epsilon_{t_j}\downarrow 0$ and $s_j\rightarrow s_0 = 0$, we conclude that there exists a positive integer $N$ such that $0<\epsilon_{t_j}\leq \epsilon_{t_j} + |s_j| < 1$ for all $j\geq N$, which further implies $\log(\epsilon_{t_j}) \leq \log(\epsilon_{t_j} + |s_j|)<0$ for all $j\geq N$. Thus, we have $1\ge \frac{\log(\epsilon_{t_j} + |s_j|)}{\log(\epsilon_{t_j})}$ for all $j\geq N$, and hence $\liminf_{j\rightarrow \infty}\varphi_{\epsilon_{t_j}}(s_j)\ge 0$. On the other hand, we have $\varphi_{\epsilon_{t_j}}(s_0) - \epsilon_0 = -\epsilon_0 < 0$ for all $j$. This contradicts \eqref{lem:asvar}.
\end{enumerate}
This completes the proof.
\end{proof}

\section{An algorithmic framework for the MFS$_C$ problem}\label{sec3}

In this section, we introduce an algorithmic framework for solving the MFS$_C$ problem \eqref{MFSC}. Our approach is to solve the equivalent reformulation \eqref{sum_zero_norm}. We construct a sequence of approximation problems with continuous objectives, and solve those approximate problems successively to approximate the original $\ell_0$ optimization problem in \eqref{sum_zero_norm}. A similar approach was previously used in \cite{Zhao12} for solving $\ell_0$ minimization problems to find sparse solutions of linear systems.

Our algorithm, which is an epigraphical approximation scheme for the MFS$_C$ problem (\EAS{}), is presented below as Algorithm~\ref{algorithm_main}.

\begin{algorithm}[h]
\caption{Epigraphical approximation scheme for MFS$_C$ (\EAS{})}\label{algorithm_main}
\begin{algorithmic}
\item[Step 0.] Choose a sequence of continuous functions $\{\Phi_{\epsilon_k}\}$ that is epi-convergent and pointwise convergent to $\Phi_0$. Choose $\tilde x^{0}\in \mathbb{R}^n$. Let $k=1$.
\item[Step 1.]
\begin{enumerate}[{\rm (a)}]
  \item Find an approximate minimizer $\tilde{x}^k$ of $\Phi_{\epsilon_k} + \delta_C$ by an iterative algorithm initialized at $\tilde{x}^{k-1}$.
  \item If a termination criterion is not met, set $k\leftarrow k+1$ and go to Step 1(a).
\end{enumerate}
\end{algorithmic}
\end{algorithm}

In \EAS{}, we first construct a sequence of continuous functions $\{\Phi_{\epsilon_k}\}$. However, different from the literature, we require the sequence of functions to be both {\em epi}-convergent and {\em pointwise} convergent to $\Phi_0$ in \eqref{sum_zero_norm}; see \cite[Chapter~7]{Rock_wets98} for the definition of epi-convergence. Then, in each iteration of our algorithm, we approximately minimize $\Phi_{\epsilon_k}+\delta_C$ and use the approximate minimizer $\tilde{x}^k$ as an initial point for minimizing $\Phi_{\epsilon_{k+1}}+\delta_C$. It can be shown that if $\tilde{x}^k \in \nu_k$-$\argmin(\Phi_{\epsilon_k}+\delta_C)$ with $\nu_k \downarrow 0$, then any accumulation point of $\{\tilde{x}^k\}$ is a minimizer of \eqref{sum_zero_norm}; see \cite[Theorem~7.46(a)]{Rock_wets98} and \cite[Theorem~7.31(b)]{Rock_wets98}.

In order to make use of \EAS{}, we need to specify how to construct the sequence of continuous functions $\{\Phi_{\epsilon_k}\}$ and how to solve the corresponding subproblem. We postpone the discussion of the subproblem to the next section. In the remainder of this section, we discuss how the sequence of continuous functions $\{\Phi_{\epsilon_k}\}$ can be constructed. We start with the following theorem, which suggests a simple way of constructing such a sequence.

\begin{theorem}\label{lem:sure_epi_con}
Let $\{\varphi_{\epsilon_k}(\cdot)\}$ be a sequence of continuous functions on $\R$ that is both epi-convergent and pointwise convergent to $|\cdot|_0$ on $\R$. Define $\Phi_{\epsilon_k}(x) := \sum_{i=1}^m \varphi_{\epsilon_k}(d_{D_i}^2(x))$. Then $\{\Phi_{\epsilon_{k}}\}$ is both epi-convergent and pointwise convergent to $\Phi_{0}$ in \eqref{sum_zero_norm}.
\end{theorem}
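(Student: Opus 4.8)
The plan is to establish the two convergence properties separately, relying on the standard sequential characterization of epi-convergence in \cite[Proposition~7.2]{Rock_wets98}: $\{\Phi_{\epsilon_k}\}$ epi-converges to $\Phi_0$ if and only if at every $x$ one has (i) $\liminf_k \Phi_{\epsilon_k}(x^k)\ge \Phi_0(x)$ for every sequence $x^k\to x$, and (ii) $\limsup_k \Phi_{\epsilon_k}(x^k)\le \Phi_0(x)$ for some sequence $x^k\to x$. The single structural fact I would exploit throughout is that each $s_i(x):=d_{D_i}^2(x)$ is continuous (being the square of the $1$-Lipschitz function $d_{D_i}$), so that $x^k\to x$ forces $s_i(x^k)\to s_i(x)$ for every $i$. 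This is what lets me transfer the one-dimensional convergence hypotheses on $\{\varphi_{\epsilon_k}\}$ through the compositions and then through the finite sum.

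Pointwise convergence is immediate: for a fixed $x$ the numbers $s_i(x)$ are fixed, so pointwise convergence of $\{\varphi_{\epsilon_k}\}$ to $|\cdot|_0$ gives $\varphi_{\epsilon_k}(s_i(x))\to |s_i(x)|_0$ for each $i$, and summing the finitely many terms yields $\Phi_{\epsilon_k}(x)\to \Phi_0(x)$. For the liminf inequality (i), fix $x$ and an arbitrary $x^k\to x$. Continuity gives $s_i(x^k)\to s_i(x)$, so the liminf half of the epi-convergence of $\{\varphi_{\epsilon_k}\}$, applied to the sequence $\{s_i(x^k)\}$, yields $\liminf_k \varphi_{\epsilon_k}(s_i(x^k))\ge |s_i(x)|_0$ for each $i$. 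Since all of these liminfs are bounded below by $0$, no $\infty-\infty$ ambiguity arises and I may invoke the superadditivity of $\liminf$ over the finite sum to conclude $\liminf_k \Phi_{\epsilon_k}(x^k)\ge \sum_{i=1}^m |s_i(x)|_0=\Phi_0(x)$.

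For the recovery-sequence inequality (ii), the naive attempt of patching together the one-dimensional recovery sequences provided by epi-convergence of $\{\varphi_{\epsilon_k}\}$ at each point $s_i(x)$ fails, because those sequences live in $\R$ and there is in general no single $x^k\to x$ realizing all $m$ of them simultaneously through the maps $s_i$. This is precisely where the additional pointwise-convergence hypothesis earns its keep: I would simply take the constant recovery sequence $x^k\equiv x$, for which the pointwise convergence of $\Phi_{\epsilon_k}$ to $\Phi_0$ established in the previous step gives $\limsup_k \Phi_{\epsilon_k}(x)=\lim_k \Phi_{\epsilon_k}(x)=\Phi_0(x)$, verifying (ii). The main obstacle is thus conceptual rather than computational: recognizing that epi-convergence alone need not survive the composition-then-sum because of the recovery direction, whereas assuming pointwise convergence in addition bypasses this difficulty cleanly and furnishes the constant recovery sequence for free.
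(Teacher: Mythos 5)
Your proof is correct and follows essentially the same route as the paper's: both verify the sequential characterization of epi-convergence from \cite[Proposition~7.2]{Rock_wets98}, using continuity of $d_{D_i}^2$ to transfer the liminf inequality through the composition and the constant sequence $x^k\equiv x$ (justified by pointwise convergence) as the recovery sequence. The only cosmetic difference is that the paper first establishes epi- and pointwise convergence of each summand $\varphi_{\epsilon_k}(d_{D_i}^2(\cdot))$ and then cites \cite[Theorem~7.46]{Rock_wets98} to pass to the finite sum, whereas you handle the sum directly via superadditivity of $\liminf$ (correctly noting that nonnegativity of the individual liminfs rules out any $\infty-\infty$ issue).
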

\begin{proof}
We start by showing that for each $i$, the sequence of functions $\{\varphi_{\epsilon_k}(d_{D_i}^2(\cdot))\}$ epi-converges to $|d_{D_i}^2(\cdot)|_0$.
In view of \cite[Proposition~7.2]{Rock_wets98}, it suffices to show that, for each $i$ and $x \in \mathbb{R}^n$, it holds that
\begin{equation}\label{epicon}
\left\{
\begin{split}
\liminf_{k\to\infty}\varphi_{\epsilon_k}(d_{D_i}^2({x}^k)) \geq |d_{D_i} ^2 (x)|_0 \text{~ for every sequence ~}{x}^k\rightarrow x,\\
\limsup_{k\to\infty}\varphi_{\epsilon_k}(d_{D_i}^2({x}^k))\leq |d_{D_i} ^2 (x)|_0 \text{~ for some sequence ~}{x}^k\rightarrow x.
\end{split}
\right.
\end{equation}
Since $\{\varphi_{\epsilon_k}(\cdot)\}$ converges pointwise to $|\cdot|_0$, the second relation above holds trivially for the constant sequence ${x}^k \equiv x$. On the other hand, consider any sequence ${x}^k\rightarrow x$. Then we have $d_{D_i}^2({x}^k)\to d_{D_i} ^2 (x)$. Using this together with \cite[Proposition~7.2]{Rock_wets98} and the fact that $\{\varphi_{\epsilon_k}(\cdot)\}$ epi-converges to $|\cdot|_0$, we conclude that the first relation in \eqref{epicon} also holds. Thus, we have shown that $\{\varphi_{\epsilon_k}(d_{D_i}^2(\cdot))\}$ epi-converges to $|d_{D_i}^2(\cdot)|_0$.

Now, notice that $\{\varphi_{\epsilon_k}(d_{D_i}^2(\cdot))\}$ also pointwise converges to $|d_{D_i}^2(\cdot)|_0$ since $\{\varphi_{\epsilon_k}(\cdot)\}$ pointwise converges to $|\cdot|_0$. The desired conclusion now follows from these and \cite[Theorem~7.46]{Rock_wets98}. This completes the proof.
\end{proof}

Based on the above theorem, in order to construct the desired sequence $\{\Phi_{\epsilon_k}\}$ as required in \EAS{}, it suffices to construct a sequence of continuous functions $\{\varphi_{\epsilon_k}(\cdot)\}$ that is both epi-convergent and pointwise convergent to $|\cdot|_0$ on $\R$ and define $\Phi_{\epsilon_k}$ accordingly. We now present some concrete examples of such $\{\varphi_{\epsilon_k}\}$.

\begin{example}\label{lem:3}
Let $\{\epsilon_k\}$ be a decreasing positive sequence with $\epsilon_k\downarrow 0$.
\begin{enumerate}[{\rm (a)}]
  \item Consider $\varphi_{\epsilon}(s)=1-\frac{\log(|s|+\epsilon)}{\log\epsilon}$, which appeared in \cite[Example~2.3(i)]{Zhao12}. We claim that the sequence $\{\varphi_{\epsilon_k}(\cdot)\}$ is both epi-convergent and pointwise convergent to $|\cdot|_0$.

  First, it is routine to show the pointwise convergence. Also, we know from Lemma~\ref{equi-lsc} that the sequence $\{\varphi_{\epsilon_k}\}$ is equi-lsc on $\mathbb{R}$. This together with the pointwise convergence and \cite[Theorem~7.10]{Rock_wets98} shows that $\{\varphi_{\epsilon_k}(\cdot)\}$ also epi-converges to $|\cdot|_0$.

  \item Consider $\varphi_{\epsilon}(s)=\frac{|s|}{|s|+\epsilon} + \epsilon |s|$, which is a modification of the function in \cite[Example~2.6]{Zhao12}. We claim that the sequence $\{\varphi_{\epsilon_k}(\cdot)\}$ is both epi-convergent and pointwise convergent to $|\cdot|_0$.

  Again, it is routine to show the pointwise convergence. Next, define $\varphi_{\epsilon_k,1}(s) := \frac{|s|}{|s|+\epsilon_k}$ and $\varphi_{\epsilon_k,2}(s):= \epsilon_k |s|$. Then the sequence $\{\varphi_{\epsilon_k,1}(\cdot)\}$ is nondecreasing and converges pointwise to $|\cdot|_0$, and the sequence $\{\varphi_{\epsilon_k,2}\}$ is nonincreasing and converges pointwise to $0$. Thus, according to Proposition~7.4(c) and (d) of \cite{Rock_wets98}, we see that $\{\varphi_{\epsilon_k,1}(\cdot)\}$ epi-converges to $|\cdot|_0$, and $\{\varphi_{\epsilon_k,2}\}$ epi-converges to $0$. Since $\varphi_{\epsilon_k} = \varphi_{\epsilon_k,1}+ \varphi_{\epsilon_t,2}$, using the above observations and \cite[Theorem 7.46]{Rock_wets98}, we conclude further that $\{\varphi_{\epsilon}(\cdot)\}$ epiconverges to $|\cdot|_0$.
\end{enumerate}
\end{example}

Suppose a sequence $\{\Phi_{\epsilon_k}\}$ satisfying the requirement of \EAS{} is constructed as described in Theorem~\ref{lem:sure_epi_con}. Then subproblems in the following form have to be approximately solved to obtain $\tilde{x}^k$:
\begin{equation}\label{min_sub}
\min_{x\in C}\ \sum_{i=1}^{m}\varphi_{\epsilon_k} (d_{D_i} ^2 (x)).
\end{equation}
This optimization problem is hard to solve in general. Indeed,
even when $\varphi_{\epsilon_k}$ is chosen to be a smooth function, the objective function in \eqref{min_sub} is still nonsmooth and nonconvex in general if some $D_i$'s are nonconvex. Thus, in the next section, we will restrict our attention to a special class of $\{\varphi_{\epsilon_k}\}$ and discuss how to solve the corresponding problem \eqref{min_sub} efficiently.

\section{A subgradient method for subproblems in \EAS{}}\label{sec4}

In this section, we propose an algorithm for solving the subproblem in Step 1(a) of \EAS{} in the form of \eqref{min_sub} for a large class of choices of $\varphi_\epsilon$.
Specifically, let $\Theta$ denote the set of level-bounded continuous concave functions $\psi:\R_+\to \R_+$ that satisfy the following properties:
\begin{enumerate}
  \item $\psi$ is continuously differentiable on $\mathbb{R}_{++}$ with positive derivative and $\psi(0)=0$;
  \item $\lim_{s\downarrow 0}\psi'(s)$ exists and is positive, and $\psi'_+$ is Lipschitz continuous on $\mathbb{R}_{+}$.
\end{enumerate}
We consider problems of the following form, for a function $\psi\in \Theta$:
\begin{align}\label{sum_subproblem_concave}
\min_{x\in C}\ \Psi (x) := \sum_{i=1}^{m}\psi (d_{D_i} ^2 (x)).
\end{align}
We would like to point out that the assumption $\psi\in \Theta$ in \eqref{sum_subproblem_concave} is general enough to cover the subproblems that arise in Step 1(a) of Algorithm~\ref{algorithm_main} for the two classes of functions studied in Example~\ref{lem:3}: $\varphi_\epsilon(s) = \frac{|s|}{|s|+\epsilon} + \epsilon |s|$, $\epsilon > 0$, and $\varphi_\epsilon(s) = 1-\frac{\log(|s|+\epsilon)}{\log\epsilon}$, $\epsilon\in (0,1)$.
Indeed, for $\varphi_\epsilon(s) = \frac{|s|}{|s|+\epsilon} + \epsilon |s|$, $\epsilon > 0$, the corresponding subproblem \eqref{min_sub} takes the form of \eqref{sum_subproblem_concave} with $\psi(s)= \frac{s}{s+\epsilon} + \epsilon s$; clearly, $\psi\in \Theta$. On the other hand, for $\varphi_\epsilon(s) = 1-\frac{\log(|s|+\epsilon)}{\log\epsilon}$, $\epsilon\in (0,1)$, the subproblem \eqref{min_sub} becomes
\[
\min_{x\in C}\,\sum_{i=1}^{m}\left[1-\frac{\log(d_{D_i} ^2 (x)+\epsilon)}{\log\epsilon}\right].
\]
Since $\epsilon\in (0,1)$ so that $-\log\epsilon > 0$, the above problem is equivalent to
\[
\min_{x\in C}\,\sum_{i=1}^m [\log(d_{D_i} ^2 (x)+\epsilon)-\log\epsilon],
\]
which takes the form of \eqref{sum_subproblem_concave} with $\psi(s) = \log(s+\epsilon) - \log\epsilon$; it is routine to check that $\psi\in \Theta$.

Notice that \eqref{sum_subproblem_concave} is a nonconvex nonsmooth problem in general, and it is not obvious at first glance what algorithm should be applied for solving such a problem. However, in the special case when $D_i$'s are all convex, the functions $x\mapsto d_{D_i} ^2 (x)$, $i=1,\ldots,m$, are smooth, and \eqref{sum_subproblem_concave} can be solved by the classical gradient projection algorithm. This method can be applied efficiently when the projections onto $C$ and $D_i$'s can be computed efficiently, because the gradient of $\Psi$ can be computed in terms of projections onto $D_i$'s:
\begin{align}\label{nablaPsi}
\nabla \Psi(x) = 2\sum_{i=1}^{m}\psi'_+(d_{D_i} ^2 (x))[x-P_{D_i}(x)].
\end{align}
In the general case when $D_i$'s are possibly nonconvex, the function $\Psi$ is not everywhere differentiable in general. Nevertheless, we still have $2\psi'_+(d_{D_i} ^2 (x))[x-\xi] \in \partial (\psi\circ d_{D_i}^2)(x)$ whenever $\xi \in P_{D_i}(x)$,\footnote{This can be proved using the definition of subdifferential, \cite[Example~8.53]{Rock_wets98} and $\psi\in \Theta$.} and the element $2\psi'_+(d_{D_i} ^2 (x))[x-\xi]$ can be computed efficiently if a projection onto $D_i$ can be obtained efficiently. Thus, mimicking the framework of gradient projection algorithm, we propose a subgradient projection algorithm with nonmonotone linesearch for solving \eqref{sum_subproblem_concave}, in which $\nabla \Psi(x)$ is replaced by an element in $\sum_{i=1}^m\partial (\psi\circ d_{D_i}^2)(x)$. Our algorithm, known as subgradient projection algorithm with nonmonotone linesearch (\GP{}), is presented below as Algorithm~\ref{algorithm}. Even though this is a subgradient type algorithm, surprisingly, we can show that the linesearch can be terminated after finitely many inner iterations (i.e., the linesearch is well defined), and that the stepsize sequence $\{\alpha_t\}$ in the algorithm has a uniform lower bound (under an additional assumption on the collection of closed sets; see Theorem~\ref{th:subsequence} below), unlike classical subgradient methods (see, for example, \cite{Bertsekas_99}).

\begin{algorithm}
\caption{Subgradient projection algorithm with nonmonotone linesearch (\GP{}) for \eqref{sum_subproblem_concave}}\label{algorithm}
\begin{algorithmic}
\item[Step 0.] Choose $\alpha_{\max}> \alpha_{\min}>0$, $\eta\in (0,1)$, $\sigma>0$ and an integer $M\geq 0$. Set $t=0$ and pick an $x^0\in C$.
\item[Step 1.]
\begin{enumerate}[{\rm (a)}]
  \item Choose $\alpha^0_t \in [\alpha_{\min},\alpha_{\max}]$ and set $\alpha = \alpha^0_t$. Pick any ${\bm\xi}_i^t\in P_{D_i}(x^t)$ for $i=1,\dots,m$. Set
\begin{align}\label{average_projection}
g^t:= 2\sum_{i=1}^{m}\psi'_+(d^2_{D_i}(x^t))(x^t-{\bm\xi}_i^t).
\end{align}
  \item Choose any
\begin{align}\label{min_u}
\tilde{u} \in \argmin_{u\in C}\left\{ \inner{g^t}{u-x^t} + \frac{1}{2\alpha}\norm{u-x^t}^2 \right\}.
\end{align}
  \item If
  \begin{align}\label{linesearch}
   \Psi(\tilde{u}) - \max\limits_{[t-M]_+ \leq i \leq t} \Psi(x^{i}) \leq - \frac{\sigma}{2}\norm{\tilde{u}-x^t}^2,
\end{align}
go to Step 2. Otherwise, update $\alpha\leftarrow \eta\alpha$ and go to Step 1(b).
\end{enumerate}
\item[Step 2.] Let $\alpha_t := \alpha$, $x^{t+1}: = \tilde{u}$ and go to Step 1.
\end{algorithmic}
\end{algorithm}

We first establish the well-definedness of the linesearch procedure in \GP{}, which is an immediate consequence of the following proposition. For notational convenience, given $x\in C$ and $\bm\xi_i\in P_{D_i}(x)$ for all $i$, for each $\alpha > 0$, let $\tilde u(\alpha)$ denote a minimizer of the problem
\begin{equation}\label{tildeu}
\min_{u\in C} \ \langle g,u - x \rangle + \frac1{2\alpha}\|u-x\|^2,
\end{equation}
where $g := 2\sum_{i=1}^m \psi_+'(d_{D_i} ^2 (x))(x - {\bm\xi}_i)$.

\begin{proposition}[\textbf{Sufficient descent}]\label{lem:decreaseF}
Let $\Psi$ be defined in \eqref{sum_subproblem_concave} with $\psi\in \Theta$, $x\in C$ and $\bm\xi_i\in P_{D_i}(x)$ for all $i$. Let $\alpha > 0$ and $\tilde u(\alpha)$ be defined in \eqref{tildeu}. Then there exists $\beta > 0$ so that
\begin{align}\label{Ff-decrease}
\Psi(\tilde{u}(\alpha)) - \Psi(x)\leq \left(-\frac{1}{\beta} + \sum_{i=1}^{m} \psi'_+(d_{D_i} ^2 (x))\right)\norm{\tilde{u}(\alpha)-x}^2.
\end{align}
Indeed, one can take $\beta = \alpha$ when $C$ is in addition convex and set $\beta = 2\alpha$ otherwise.%
\end{proposition}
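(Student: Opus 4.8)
The plan is to chain together three ingredients: the concavity of $\psi$, a one-sided estimate of the squared distance obtained from the fixed projections $\bm\xi_i$, and the optimality of $\tilde u(\alpha)$ in \eqref{tildeu}. First I would linearize $\Psi$ using concavity. Since $\psi\in\Theta$ is concave on $\R_+$ with right derivative $\psi'_+$, the supporting-line inequality $\psi(b)\le \psi(a)+\psi'_+(a)(b-a)$ holds for all $a,b\ge 0$; at an interior point this is the usual concavity estimate, and at $a=0$ it follows from the monotonicity of the difference quotient $\frac{\psi(b)-\psi(0)}{b}$ together with the limit defining $\psi'_+(0)$. Applying this with $a=d_{D_i}^2(x)$ and $b=d_{D_i}^2(\tilde u(\alpha))$ (both nonnegative) and summing over $i$ gives
\[
\Psi(\tilde u(\alpha))-\Psi(x)\le \sum_{i=1}^m \psi'_+(d_{D_i}^2(x))\bigl[d_{D_i}^2(\tilde u(\alpha))-d_{D_i}^2(x)\bigr].
\]

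Next I would bound each distance increment. Because $\bm\xi_i\in P_{D_i}(x)$ we have $d_{D_i}^2(x)=\norm{x-\bm\xi_i}^2$, whereas $\bm\xi_i\in D_i$ is merely feasible for $\tilde u(\alpha)$, so $d_{D_i}^2(\tilde u(\alpha))\le \norm{\tilde u(\alpha)-\bm\xi_i}^2$. Expanding $\norm{\tilde u(\alpha)-\bm\xi_i}^2$ about $x$ yields
\[
d_{D_i}^2(\tilde u(\alpha))-d_{D_i}^2(x)\le \norm{\tilde u(\alpha)-x}^2+2\inner{\tilde u(\alpha)-x}{x-\bm\xi_i}.
\]
Substituting into the previous display and recognizing $g = 2\sum_i\psi'_+(d_{D_i}^2(x))(x-\bm\xi_i)$ collapses the weighted cross terms into $\inner{g}{\tilde u(\alpha)-x}$, giving
\[
\Psi(\tilde u(\alpha))-\Psi(x)\le \Bigl(\sum_{i=1}^m\psi'_+(d_{D_i}^2(x))\Bigr)\norm{\tilde u(\alpha)-x}^2+\inner{g}{\tilde u(\alpha)-x}.
\]

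The final step is to control $\inner{g}{\tilde u(\alpha)-x}$ via the optimality of $\tilde u(\alpha)$, and this is exactly where the convex/nonconvex dichotomy enters. Writing $h(u):=\inner{g}{u-x}+\frac{1}{2\alpha}\norm{u-x}^2$, note that $x\in C$ is feasible for \eqref{tildeu} and $h(x)=0$. In the general (possibly nonconvex) case, merely comparing objective values gives $h(\tilde u(\alpha))\le h(x)=0$, i.e. $\inner{g}{\tilde u(\alpha)-x}\le -\frac{1}{2\alpha}\norm{\tilde u(\alpha)-x}^2$, which yields \eqref{Ff-decrease} with $\beta=2\alpha$. When $C$ is convex, $h$ is $\frac1\alpha$-strongly convex, and its constrained minimizer $\tilde u(\alpha)$ satisfies the sharper inequality $h(x)\ge h(\tilde u(\alpha))+\frac{1}{2\alpha}\norm{x-\tilde u(\alpha)}^2$ (a standard consequence of strong convexity on a convex set, obtained by testing along the segment from $\tilde u(\alpha)$ to $x$ and letting the step tend to $0$); since $h(x)=0$ this improves the estimate to $\inner{g}{\tilde u(\alpha)-x}\le -\frac1\alpha\norm{\tilde u(\alpha)-x}^2$, giving $\beta=\alpha$. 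Combining either bound with the displayed inequality completes the proof. I expect the only real obstacle to be correctly tracking the factor of two: everything else is a direct chain of inequalities, and the sole place where convexity of $C$ is used — and where the distinction $\beta=\alpha$ versus $\beta=2\alpha$ originates — is this concluding strong-convexity step.
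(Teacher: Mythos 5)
Your proposal is correct and follows essentially the same three-step route as the paper's proof: linearize $\Psi$ via concavity of $\psi$, bound $d_{D_i}^2(\tilde u(\alpha))-d_{D_i}^2(x)$ using the fixed projections $\bm\xi_i$, and then invoke the optimality of $\tilde u(\alpha)$ (simple value comparison for $\beta=2\alpha$, strong convexity when $C$ is convex for $\beta=\alpha$). The only cosmetic difference is in the middle step, where the paper routes the bound through the convex function $h_i(x)=\sup_{y\in D_i}\{\inner{x}{y}-\tfrac12\norm{y}^2\}$ and the inclusion $P_{D_i}(x)\subseteq\partial h_i(x)$, whereas you obtain the identical inequality directly from $d_{D_i}^2(\tilde u(\alpha))\le\norm{\tilde u(\alpha)-\bm\xi_i}^2$.
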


\begin{proof}
Since $\psi$ is concave and $\psi'_+$ is continuous on $\mathbb{R}_{+}$, we see that
\begin{align}\label{f1}
\begin{split}
&\Psi(\tilde{u}(\alpha))\leq \sum_{i=1}^{m} \left\{\psi(d^2_{D_i}(x)) + \psi'_+(d^2_{D_i}(x))[d^2_{D_i}(\tilde{u}(\alpha)) - d^2_{D_i}(x)]\right\}\\
&=\Psi(x) + \sum_{i=1}^{m} \psi'_+(d^2_{D_i}(x))[d^2_{D_i}(\tilde{u}(\alpha)) - d^2_{D_i}(x)].
\end{split}
\end{align}
Next, from the definition of distance function, we see that, for each fixed $i$,
\begin{align*}
d^2_{D_i}(x)= \norm{x}^2 - 2h_i(x),
\end{align*}
where $h_i(x):  = \sup\{\inner{x}{y}-\frac{1}{2}\norm{y}^2: y\in D_i\}$. Notice that $h_i$ is finite everywhere and is the pointwise supreme of affine functions. Thus, $h_i$ is a continuous convex function and one can check directly from definition that $P_{D_i}(x)\subseteq \partial h_i(x)$. Then we have
\begin{align}\label{f2}
\begin{split}
&\sum_{i=1}^{m} \psi'_+(d^2_{D_i}(x))[d^2_{D_i}(\tilde{u}(\alpha)) - d^2_{D_i}(x)] \\
&= \sum_{i=1}^{m} \psi'_+(d^2_{D_i}(x)) \{\norm{\tilde{u}(\alpha)}^2 - \|x\|^2 - 2 [h_i(\tilde{u}(\alpha)) - h_i(x)]\}\\
&\leq \sum_{i=1}^{m} \psi'_+(d^2_{D_i}(x)) [\norm{\tilde{u}(\alpha)}^2 - \|x\|^2 -2 \inner{{\bm\xi}_i}{\tilde{u}(\alpha)-x}]\\
&=\inner{g}{\tilde{u}(\alpha)-x} + \sum_{i=1}^{m} \psi'_+(d^2_{D_i}(x))\norm{\tilde{u}(\alpha)-x}^2,
\end{split}
\end{align}
where the first inequality holds because $\psi'_+>0$, $h_i$ is convex and ${\bm\xi}_i\in P_{D_i}(x)\subseteq \partial h_i(x)$ and the last equality holds because of the relation $\norm{\tilde{u}(\alpha)}^2 - \|x\|^2 -2 \inner{{\bm\xi}_i}{\tilde{u}(\alpha)-x} = \|\tilde{u}(\alpha)-x\|^2 + 2\inner{x - {\bm\xi}_i}{\tilde{u}(\alpha)-x}$ and the definition of $g$ in \eqref{tildeu}.

Now, using the definition of $\tilde u(\alpha)$ as a minimizer of \eqref{tildeu} and the fact that $x\in C$, we see that $\inner{g}{\tilde{u}(\alpha) - x} + \frac{1}{2\alpha}\|\tilde{u}(\alpha) - x\|^2 \leq 0$. Combining this with \eqref{f1} and \eqref{f2}, we see that \eqref{Ff-decrease} holds with $\beta = 2\alpha$.

Finally, suppose $C$ is in addition convex. Then the function $f(u):= \inner{g}{u-x} + \frac{1}{2\alpha}\norm{u-x}^2 + \delta_C(u)$ is strongly convex with modulus $\frac{1}{\alpha}$. Using this and the definition of $\tilde u(\alpha)$ as a minimizer of \eqref{tildeu}, we see that $f(x)-f(\tilde{u}(\alpha))\geq \frac{1}{2\alpha}\norm{\tilde{u}(\alpha)-x}^2$. Rearranging terms, we have
\[
\inner{g}{\tilde{u}(\alpha) - x} \leq -\frac{1}{\alpha}\norm{\tilde{u}(\alpha)-x}^2.
\]
This together with \eqref{f1} and \eqref{f2} implies that \eqref{Ff-decrease} holds with $\beta = \alpha$. This completes the proof.
\end{proof}

Using Proposition~\ref{lem:decreaseF}, it is then routine to show the well-definedness of the linesearch procedure in \GP{}.
\begin{corollary}[\textbf{Well-definedness of linesearch}]\label{well-defined}
Let $\Psi$ be defined in \eqref{sum_subproblem_concave} with $\psi\in \Theta$ and suppose that the \GP{} is applied for solving \eqref{sum_subproblem_concave}. Then, in each iteration, the linesearch criterion in Step~1(c) is satisfied after finitely many inner iterations.
\end{corollary}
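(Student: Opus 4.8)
The plan is to deduce the corollary almost immediately from the sufficient-descent estimate in Proposition~\ref{lem:decreaseF}. The key observation that makes this possible is that, within the inner loop of Step~1, the iterate $x^t$ and the projections $\bm\xi_i^t\in P_{D_i}(x^t)$ (hence $g^t$) are \emph{frozen}, and only the trial stepsize $\alpha$ is shrunk. Consequently, the $\tilde u$ generated by \eqref{min_u} for the current $\alpha$ is exactly the point $\tilde u(\alpha)$ from \eqref{tildeu} with $x=x^t$ and $g = g^t$, so Proposition~\ref{lem:decreaseF} applies verbatim with this fixed data for every $\alpha$ tried in the linesearch.

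First I would simplify the linesearch target \eqref{linesearch}. Since $x^t$ itself is one of the indices in the window, we have $\Psi(x^t)\le \max_{[t-M]_+\le i\le t}\Psi(x^i)$, so it suffices to establish the stronger inequality $\Psi(\tilde u(\alpha))-\Psi(x^t)\le -\frac{\sigma}{2}\norm{\tilde u(\alpha)-x^t}^2$ for all sufficiently small $\alpha$. Plugging in the bound \eqref{Ff-decrease}, this reduces to forcing the coefficient $-\frac1\beta+\sum_{i=1}^m\psi'_+(d_{D_i}^2(x^t))$ to be at most $-\frac\sigma2$. The second term here is a fixed finite constant for the present iteration, say $L_t:=\sum_{i=1}^m\psi'_+(d_{D_i}^2(x^t))$, whereas $\beta\in\{\alpha,2\alpha\}$ and so $-\frac1\beta\to-\infty$ as $\alpha\downarrow 0$. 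Hence there is an explicit threshold on $\alpha$ (for instance $\alpha\le(2L_t+\sigma)^{-1}$ in the general case, or $\alpha\le(L_t+\sigma/2)^{-1}$ when $C$ is convex) below which the coefficient is $\le-\frac\sigma2$ and \eqref{linesearch} holds.

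Finally, I would close the argument by a counting step: the inner loop starts from $\alpha=\alpha^0_t\in[\alpha_{\min},\alpha_{\max}]$ and, upon each failure, replaces $\alpha$ by $\eta\alpha$ with $\eta\in(0,1)$; therefore $\alpha$ falls below the above threshold after finitely many inner iterations, at which point the criterion in Step~1(c) is met. The only point needing a word of care—the mild ``obstacle''—is the finiteness and $\alpha$-independence of $L_t$: this holds because $\psi\in\Theta$ guarantees that $\lim_{s\downarrow 0}\psi'(s)$ exists and $\psi'_+$ is finite-valued (indeed Lipschitz) on $\R_+$, while $x^t$ and the $\bm\xi_i^t$ remain fixed throughout the inner iterations, so $L_t$ is a genuine constant for each outer index $t$ and the threshold is well defined.
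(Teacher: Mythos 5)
Your argument is correct and is exactly the "routine" deduction the paper has in mind: it omits the proof of Corollary~\ref{well-defined} but uses precisely this reasoning explicitly in the proof of Theorem~\ref{th:subsequence}(a), where the threshold $\alpha\le(2mM_2+\sigma)^{-1}$ plays the role of your $(2L_t+\sigma)^{-1}$. Your thresholds, the reduction via $\Psi(x^t)\le\max_{[t-M]_+\le i\le t}\Psi(x^i)$, and the finiteness of $L_t$ from $\psi\in\Theta$ all check out.
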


We next show that the stepsize sequence $\{\alpha_t\}$ generated in \GP{} for solving \eqref{sum_subproblem_concave} has a {\em uniform} lower bound under the additional assumption that $C^{\infty}\cap(\bigcap_{i=1}^m D_i^{\infty})=\{0\}$.\footnote{This assumption is satisfied if $C$ or any $D_i$ is bounded.} We will also show that the sequence $\{x^t\}$ generated by \GP{} is bounded and any accumulation point is a stationary point of the function $\Psi+\delta_C$ in \eqref{sum_subproblem_concave}. Here, we say that $\bar x$ is a stationary point of $\Psi+\delta_C$ if
\begin{align}\label{stationarypoint}
 0\in \sum_{i=1}^{m} [2\psi_+'(d^2_{D_i}(\bar x))(\bar x-P_{D_i}(\bar x))]+N_C(\bar x).
\end{align}
Note that if $\bar x$ is a local minimizer of $\Psi+\delta_C$, then in view of \cite[Theorem~10.1]{Rock_wets98}, \cite[Corollary~10.9]{Rock_wets98}, \cite[Theorem~1.110(ii)]{Morduk06} and \cite[Example~8.53]{Rock_wets98}, one can show that $\bar x$ is a stationary point of $\Psi+\delta_C$.

\begin{theorem}\label{th:subsequence}
Suppose that $C^{\infty}\cap(\bigcap_{i=1}^m D_i^{\infty})=\{0\}$ and let $\Psi$ be defined in \eqref{sum_subproblem_concave} with $\psi\in \Theta$. Let $\{x^t\}$, $\{\alpha_t\}$ and $\{{\bm\xi}_i^t\}$, $i=1,\ldots,m$, be the sequences generated by \GP{}. Then the following statements hold.
\begin{enumerate}[\rm(a)]
  \item The sequences $\{x^t\}$ and $\{{\bm\xi}_i^t\}$, $i=1,\ldots,m$, are all bounded, and $\inf\limits_{t\ge 0}\alpha_t > 0$.
  \item It holds that $\lim\limits_{t\to\infty}\|x^{t+1}-x^t\|= 0$ and the limit $\lim\limits_{t\rightarrow\infty}\Psi(x^{t})$ exists.
  \item {\bf (Global subsequential convergence)} Any cluster point of $\{x^t\}$ is a stationary point of $\Psi+\delta_C$.
\end{enumerate}
\end{theorem}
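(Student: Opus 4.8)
The plan is to prove the three items in order, with part~(a) carrying most of the weight. I would start with the uniform lower bound on the stepsizes, which I expect to be the crux and, importantly, does \emph{not} require any boundedness of $\{x^t\}$. Since $\max_{[t-M]_+\le i\le t}\Psi(x^i)\ge\Psi(x^t)$, the nonmonotone test \eqref{linesearch} is implied by the one-sided inequality $\Psi(\tilde u(\alpha))-\Psi(x^t)\le-\frac{\sigma}{2}\|\tilde u(\alpha)-x^t\|^2$. Feeding this into the sufficient-descent estimate \eqref{Ff-decrease} of Proposition~\ref{lem:decreaseF} (with $\beta=2\alpha$), the test is guaranteed as soon as $\frac{1}{2\alpha}\ge\sum_{i=1}^m\psi'_+(d_{D_i}^2(x^t))+\frac{\sigma}{2}$. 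The key point that distinguishes this from generic subgradient methods is that $\psi\in\Theta$ is concave, so $\psi'_+$ is nonincreasing and hence $\sum_{i=1}^m\psi'_+(d_{D_i}^2(x^t))\le m\,\psi'_+(0)=m\lim_{s\downarrow0}\psi'(s)$, a finite constant independent of $t$. Therefore every $\alpha\le(2m\psi'_+(0)+\sigma)^{-1}$ passes the test, and since the linesearch only shrinks $\alpha$ by the factor $\eta$ starting from a value $\ge\alpha_{\min}$, the accepted $\alpha_t$ obeys $\alpha_t\ge\min\{\alpha_{\min},\,\eta(2m\psi'_+(0)+\sigma)^{-1}\}>0$.

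Next I would establish boundedness. Setting $M_t:=\max_{[t-M]_+\le i\le t}\Psi(x^i)$, a standard consequence of \eqref{linesearch} is that $\{M_t\}$ is nonincreasing, so $\Psi(x^t)\le M_t\le M_0=\Psi(x^0)$ for all $t$. It then suffices to show that $\Psi+\delta_C$ is level-bounded, and this is where the horizon-cone hypothesis enters. Arguing by contradiction, if $x^k\in C$ with $\|x^k\|\to\infty$ and $\Psi(x^k)\le r$, then nonnegativity of the summands together with level-boundedness of $\psi$ forces each $d_{D_i}(x^k)$ to stay bounded. Normalizing, a subsequence of $x^k/\|x^k\|$ converges to some $\bar d$ with $\|\bar d\|=1$ and $\bar d\in C^\infty$; moreover, choosing $y_i^k\in P_{D_i}(x^k)$, the bound on $\|x^k-y_i^k\|$ gives $y_i^k/\|x^k\|\to\bar d$, so $\bar d\in D_i^\infty$ for every $i$, contradicting $C^{\infty}\cap(\bigcap_{i=1}^m D_i^{\infty})=\{0\}$. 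Hence $\{x^t\}$ lies in the bounded sublevel set $\{x\in C:\Psi(x)\le\Psi(x^0)\}$, and boundedness of $\{\bm\xi_i^t\}$ follows from $\|\bm\xi_i^t\|\le\|x^t\|+d_{D_i}(x^t)$ and the same sublevel bound.

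For part~(b) I would invoke the standard nonmonotone-linesearch (Grippo--Lampariello--Lucidi / Zhang--Hager) analysis. Since $\{M_t\}$ is nonincreasing and bounded below by $0$, it converges to some $M_*$. Writing $\ell(t)$ for a window index attaining $M_t$, the accepted linesearch at iteration $\ell(t)-1$ gives $M_t+\frac{\sigma}{2}\|x^{\ell(t)}-x^{\ell(t)-1}\|^2\le M_{\ell(t)-1}$; as both $M_{\ell(t)-1}$ and $M_t$ tend to $M_*$, we get $\|x^{\ell(t)}-x^{\ell(t)-1}\|\to0$, and a finite induction over the window length $M$ propagates this to $\|x^{t+1}-x^t\|\to0$. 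Finally, $\|x^{\ell(t)}-x^t\|\le\sum_{j=\ell(t)}^{t-1}\|x^{j+1}-x^j\|\to0$ together with continuity of $\Psi$ on the bounded orbit yields $\Psi(x^t)\to M_*$, so $\lim_{t\to\infty}\Psi(x^t)$ exists.

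For part~(c), let $x^{t_j}\to\bar x$, so $\bar x\in C$. The optimality condition for \eqref{min_u} reads $-g^t-\alpha_t^{-1}(x^{t+1}-x^t)\in N_C(x^{t+1})$. I would pass to the limit along $t_j$: $x^{t_j+1}\to\bar x$ by part~(b); the bounded sequences $\{\bm\xi_i^{t_j}\}$ subconverge to some $\bar\xi_i\in D_i$ with $\|\bar x-\bar\xi_i\|=d_{D_i}(\bar x)$ by continuity of $d_{D_i}$, hence $\bar\xi_i\in P_{D_i}(\bar x)$; continuity of $\psi'_+$ and of $d_{D_i}^2$ gives $g^{t_j}\to2\sum_{i=1}^m\psi'_+(d_{D_i}^2(\bar x))(\bar x-\bar\xi_i)$; and $\alpha_{t_j}^{-1}(x^{t_j+1}-x^{t_j})\to0$ since $\inf_t\alpha_t>0$ by part~(a) while $\|x^{t+1}-x^t\|\to0$. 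Outer semicontinuity of the limiting normal cone $N_C$ then yields $0\in2\sum_{i=1}^m\psi'_+(d_{D_i}^2(\bar x))(\bar x-\bar\xi_i)+N_C(\bar x)$ with $\bar\xi_i\in P_{D_i}(\bar x)$, which is exactly \eqref{stationarypoint}. I regard part~(a) as the main obstacle: the uniform lower bound on $\alpha_t$ rests entirely on the concavity-driven bound $\sum_i\psi'_+(d_{D_i}^2(x^t))\le m\psi'_+(0)$, and the boundedness of the iterates rests on the horizon-cone level-boundedness argument, while the window induction in part~(b) is routine but bookkeeping-heavy.
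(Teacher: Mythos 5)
Your proof is correct and follows essentially the same route as the paper's: the sufficient descent estimate of Proposition~\ref{lem:decreaseF} combined with the linesearch mechanics for the uniform stepsize bound, the horizon-cone normalization/contradiction argument for boundedness of $\{x^t\}$ and $\{\bm\xi_i^t\}$, the standard nonmonotone-linesearch lemma for part~(b), and passage to the limit in the optimality condition of \eqref{min_u} together with outer semicontinuity of $N_C$ for part~(c). The one small refinement is that you bound $\sum_{i}\psi'_+(d^2_{D_i}(x^t))$ by $m\psi'_+(0)$ using concavity of $\psi$, which makes the stepsize bound independent of the boundedness of the iterates, whereas the paper first proves boundedness and then bounds $\psi'_+$ on a compact set by continuity; both are valid.
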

\begin{proof}
(a) We first note from \eqref{linesearch} and a simple induction argument that $\Psi(x^t)\leq \Psi(x^0)$ for all $t\ge 1$.
This together with the nonnegativity and level-boundedness of $\psi$ implies that the sequence $\{d^2_{D_i}(x^t)\}$ is bounded for each $i=1,\ldots,m$. Hence, there exists $M_1 > 0$ such that
\begin{align}\label{boundxandxi}
\norm{x^t-{\bm\xi}_i^t} = d_{D_i}(x^t)\leq M_1 \text{~for all~} t \text{~and all~} i,
\end{align}
where the first equality holds because ${\bm\xi}_i^t\in P_{D_i}(x^t)$.

Now, we claim that the sequence $\{x^t\}$ is bounded. Suppose to the contrary that $\{x^t\}$ is unbounded. Then there exists a subsequence $\{x^{t_j}\}$ such that $\lim\limits_{j\rightarrow \infty}\norm{x^{t_j}}= + \infty$. By passing to a further subsequence if necessary, we may assume without loss of generality that $\|x^{t_j}\|\neq 0$ for all $j$ and $\lim_{j\to\infty}\frac{x^{t_j}}{\|x^{t_j}\|} = d$ for some $d$. Then $\|d\|=1$ and we also have from the definition of horizon cone that $d\in C^\infty$. Next, dividing $\|x^{t_j}\|$ from both sides of \eqref{boundxandxi}, we see that
\begin{align*}
0\leq \left\|\frac{{\bm\xi}^{t_j}_i}{\norm{x^{t_j}}}-\frac{x^{t_j}}{\norm{x^{t_j}}}\right\| = \frac{\norm{{\bm\xi}^{t_j}_i-x^{t_j}}}{\norm{x^{t_j}}}\leq \frac {M_1}{\norm{x^{t_j}}}.
\end{align*}
Passing to the limit in the above inequality, we see that $\lim_{k\to\infty}\frac{{\bm\xi}^{t_j}_i}{\norm{x^{t_j}}} = d$ for each $i$. Using this together with the definition of horizon cone of $D_i^{\infty}$ and the fact ${\bm\xi}_i^{t_j}\in D_i$ for all $i$ and $j$, we conclude further that $d \in \bigcap_{i=1}^m D_i^{\infty}$. Since we also have $\|d\|=1$ and $d\in C^\infty$, this contradicts the assumption that $C^{\infty}\cap\bigcap_{i=1}^m D_i^{\infty} =\{0\}$. Thus, the sequence $\{x^t\}$ is bounded. In view of \eqref{boundxandxi}, we see that $\{{\bm\xi}_i^t\}$ is also bounded for all $i$.

Next, we show that $\inf_{t\ge 0}\alpha_t > 0$. We first note from \eqref{boundxandxi} and the positivity and continuity of $\psi_+'$ on $\R_+$ that there exists $M_2 > 0$ so that $0\leq \psi_+'(d^2_{D_i}(x^t))\leq M_2$ for all $t$ and $i$. Using this fact and applying Proposition~\ref{lem:decreaseF} with $x = x^t$ and $\bm\xi_i = \bm\xi_i^t$, we see that
\[
\begin{aligned}
\Psi(\tilde{u}(\alpha)) - \Psi(x^t) \leq \left(-\frac1{2\alpha} + mM_2\right)\norm{\tilde{u}(\alpha)-x^t}^2,
\end{aligned}
\]
where $\tilde u(\alpha)$ is defined as in \eqref{tildeu} with $x = x^t$ and $g = g^t$ defined in \eqref{average_projection}, i.e., $\tilde u(\alpha)$ is a minimizer of \eqref{min_u} in the $t$-th iteration. Thus, the linesearch criterion \eqref{linesearch} is satisfied if $\alpha \le (2mM_2 + \sigma)^{-1}$. Hence, using the definition of $\alpha_t$, we must have either $\alpha_t = \alpha^0_t\ge \alpha_{\min}$ (if $\alpha^0_t\le (2mM_2 + \sigma)^{-1}$) or $\alpha_t > \eta(2mM_2 + \sigma)^{-1}$. Consequently, it holds that $\inf_{t\ge 0}\alpha_t \ge \min\{\alpha_{\min},\eta(2mM_2 + \sigma)^{-1}\} > 0$.

(b) This can be proved similarly as in \cite[Lemma~4]{Wright_nowak09}.

(c) Let $x^*$ be a cluster point of the bounded sequence $\{x^t\}$ and let $\{x^{t_j}\}$ be a convergent subsequence with limit $x^*$. Since $\{{\bm\xi}_i^t\}$ is also bounded for all $i$, by passing to a further subsequence if necessary, we may assume without loss of generality that, for each $i = 1,\dots,m$, $\bm\xi_i^{t_j}\rightarrow \bm\xi^*_i$ for some $\bm\xi^*_i$. Next, using the definition of $x^{t_j+1}$ as a minimizer of \eqref{min_u} when $\alpha = \alpha_{t_j}$, we see that
\begin{align}\label{fis order1}
0\in 2\sum_{i=1}^{m}\psi'_+(d_{D_i}^2(x^{t_j}))(x^{t_j}-{\bm\xi}_i^{t_j}) + \frac{1}{\alpha_{t_j}}(x^{{t_j}+1} - x^{t_j}) + N_C(x^{{t_j}+1}).
\end{align}
Notice that $\psi_+'(d_{D_i}^2 (\cdot))$ is continuous, $\lim\limits_{t\rightarrow \infty}\|x^{t+1}-x^t\|=0$ according to (b) and $\{\frac{1}{\alpha_t}\}$ is bounded according to (a). Then, passing to the limit as $j\rightarrow \infty$ in \eqref{fis order1}, we have
\begin{align*}
0\in \sum_{i=1}^{m}[2\psi_+'(d_{D_i}^2(x^*))(x^*-{\bm\xi}^*_i)]+ N_C(x^*).
\end{align*}
To complete the proof, it now remains to show that, for each $i$, ${\bm\xi}^*_i\in P_{D_i}(x^*)$. To this end, we first note that ${\bm\xi}^*_i\in D_i$ because $D_i$ is closed and $\{\bm\xi_i^{t_j}\}\subseteq D_i$. In addition, we we have $d_{D_i}(x^*) = \norm{x^*-{\bm\xi}^*_i}$ because $d_{D_i}(x^{t_j})=\|x^{t_j} - {\bm\xi}_i^{t_j}\|$ (thanks to $\bm\xi_i^{t_j}\in P_{D_i}(x^{t_j})$). Thus, we conclude that $\bm\xi^*_i\in P_{D_i}(x^*)$ and this completes the proof.
\end{proof}

Before ending this section, as a little digression and an immediate application of Theorem~\ref{th:subsequence}, we discuss global convergence of the averaged projection algorithm.
Averaged projection algorithm is a popular algorithm for finding a point of intersection of several nonempty closed sets $D_1,\ldots,D_m$. In this algorithm, we initialize at an $x^0\in \R^n$ and update
\begin{equation}\label{eq_avg_proj}
x^{t+1}\in \frac1m\sum_{i=1}^m P_{D_i}(x^t).
\end{equation}
When each $D_i$ is convex, the above algorithm is just the proximal gradient algorithm applied to $\frac1{2m}\sum_{i=1}^m d_{D_i}^2(x)$ with constant stepsize $1$, and its convergence is well known. However, the global convergence of the above algorithm is unknown if $D_i$'s are nonconvex: only local convergence was proved recently in \cite{Lewis-Luke09} under suitable regularity assumptions.

We next show that the averaged projection algorithm \eqref{eq_avg_proj} is a special case of \GP{} when $C = \R^n$ and $\psi(s) = \frac{s}{m}$, which clearly belongs to $\Theta$. Hence, we obtain as an immediate corollary of Theorem~\ref{th:subsequence} that the averaged projection algorithm \eqref{eq_avg_proj} is globally subsequentially convergent when $\bigcap_{i=1}^m D_i^\infty=\{0\}$.

\begin{proposition}
Suppose that $C=\mathbb{R}^n$, $\psi(s)= \frac{s}{m}$ and let $\Psi$ be defined in \eqref{sum_subproblem_concave}. Let $\alpha^0_t = \frac{1}{2}$ for all $t$ and $0<\sigma \leq 2$ in \GP{}. Then the \GP{} reduces to the averaged projection algorithm.
\end{proposition}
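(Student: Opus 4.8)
The plan is to trace through a single iteration of \GP{} under the specified parameters and verify that the accepted iterate coincides with the averaged projection update \eqref{eq_avg_proj}, with the linesearch terminating immediately at the trial stepsize $\alpha = \alpha^0_t = \frac12$ (so that no backtracking occurs and $\alpha_t = \frac12$ exactly).

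First I would substitute $\psi(s) = s/m$, for which $\psi'_+ \equiv 1/m$, into the definition of $g^t$ in \eqref{average_projection} to get $g^t = \frac{2}{m}\sum_{i=1}^m (x^t - \bm\xi_i^t)$. Since $C = \R^n$, the subproblem \eqref{min_u} is an unconstrained strongly convex minimization, so its unique minimizer has the closed form $\tilde u = x^t - \alpha g^t$. Setting $\alpha = \alpha^0_t = \frac12$ and simplifying gives $\tilde u = x^t - \frac1m\sum_{i=1}^m(x^t - \bm\xi_i^t) = \frac1m\sum_{i=1}^m \bm\xi_i^t$. Because $\bm\xi_i^t \in P_{D_i}(x^t)$, this is exactly a point of the form $\frac1m\sum_{i=1}^m P_{D_i}(x^t)$ appearing in \eqref{eq_avg_proj}.

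It then remains to confirm that the linesearch \eqref{linesearch} accepts this $\tilde u$ at the very first trial, rather than at some reduced stepsize $\eta^k/2$. For this I would invoke Proposition~\ref{lem:decreaseF}: since $C = \R^n$ is convex, \eqref{Ff-decrease} holds with $\beta = \alpha = \frac12$, and since $\sum_{i=1}^m \psi'_+(d_{D_i}^2(x^t)) = m\cdot\frac1m = 1$, it yields $\Psi(\tilde u) - \Psi(x^t) \leq (-2 + 1)\norm{\tilde u - x^t}^2 = -\frac22\norm{\tilde u - x^t}^2$. Because the maximum over the window in \eqref{linesearch} is at least $\Psi(x^t)$ and $0 < \sigma \leq 2$, this gives $\Psi(\tilde u) - \max_{[t-M]_+ \le i \le t}\Psi(x^i) \leq \Psi(\tilde u) - \Psi(x^t) \leq -\frac\sigma2\norm{\tilde u - x^t}^2$, so \eqref{linesearch} is satisfied already at $\alpha = \frac12$. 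Hence $\alpha_t = \frac12$ and $x^{t+1} = \tilde u = \frac1m\sum_{i=1}^m \bm\xi_i^t$, which is precisely the update \eqref{eq_avg_proj}.

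This argument is almost entirely a direct computation, so I do not anticipate a genuine obstacle. The only point needing care is the coupling of the constants: the choice $\alpha^0_t = \frac12$ together with $\psi(s)=s/m$ makes the bracket $-1/\alpha + 1$ in \eqref{Ff-decrease} equal exactly $-1$, and the hypothesis $\sigma \leq 2$ is precisely what guarantees this descent is strong enough to pass \eqref{linesearch} on the first trial, thereby fixing the stepsize at $\frac12$ and making the reduction to averaged projection exact rather than merely approximate.
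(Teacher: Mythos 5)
Your proposal is correct and follows essentially the same route as the paper's own proof: both invoke Proposition~\ref{lem:decreaseF} with $\beta=\alpha=\tfrac12$ and $\sum_i\psi'_+\equiv 1$ to show the linesearch accepts the first trial stepsize, and both then use the (unconstrained) optimality condition of \eqref{min_u} to identify $x^{t+1}=\frac1m\sum_{i=1}^m\bm\xi_i^t$. The only cosmetic difference is the order of these two steps, and your explicit remark that the nonmonotone maximum in \eqref{linesearch} is at least $\Psi(x^t)$ is a nice touch the paper leaves implicit.
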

\begin{proof}
It suffices to show that $x^{t+1}= \frac{1}{m}\sum_{i=1}^m{\bm\xi}_i^t$ in every iteration of \GP{} under the assumptions.
To this end, we first apply Proposition~\ref{lem:decreaseF} with $x = x^t$ and $\bm\xi_i = \bm\xi_i^t$, and invoke $\psi'_+\equiv \frac1m$ to obtain
\[
\begin{aligned}
 \Psi(\tilde{u}(0.5)) - \Psi(x^t) \leq (-2+1)\norm{\tilde{u}(0.5)-x^t}^2 \leq - \frac{\sigma}{2}\norm{\tilde{u}(0.5)-x^t}^2,
\end{aligned}
\]
where $\tilde u(\alpha)$ is defined as in \eqref{tildeu} with $x = x^t$ and $g = g^t$ defined in \eqref{average_projection} (i.e., $\tilde u(\alpha)$ is a minimizer of \eqref{min_u} in the $t$-th iteration.), and the last inequality holds because $0<\sigma\leq 2$.
This implies that $\tilde{u}(0.5)$ satisfies \eqref{linesearch} and hence $x^{t+1}= \tilde{u}(0.5)$. Using this, the first-order optimality condition of the subproblem \eqref{min_u} with $\alpha = 0.5$ and the fact that $C=\mathbb{R}^n$, we see further that
$0= g^t + 2 (x^{t+1} - x^t)$. Thus
\begin{align*}
x^{t+1} = x^t - \frac{1}{2} g^t = x^t - \left(\frac{1}{m}\sum_{i=1}^{m}[x^t-{\bm\xi}_i^t]\right) = \frac{1}{m}\sum_{i=1}^{m}{\bm\xi}_i^t,
\end{align*}
where the second equality follows from \eqref{average_projection} and the fact that $\psi_+'\equiv \frac{1}{m}$ on $\R_+$. This completes the proof.
\end{proof}

\begin{corollary}\label{coro:aver}
  Suppose that $\bigcap_{i=1}^m D_i^\infty = \{0\}$ and let $\{x^t\}$ be the sequence generated by the averaged projection algorithm \eqref{eq_avg_proj}. Then the sequence $\{x^t\}$ is bounded, $\lim_{t\to\infty}\|x^{t+1}-x^t\| = 0$ and any cluster point of $\{x^t\}$ is a stationary point of $\Psi+\delta_C$ in \eqref{sum_subproblem_concave} with $\psi(s) = \frac{s}m$ and $C = \R^n$.
\end{corollary}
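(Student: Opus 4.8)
The plan is to treat this as a direct corollary of Theorem~\ref{th:subsequence}, using the preceding proposition to identify the averaged projection algorithm with a specific instance of \GP{}. First, I would invoke that proposition: with $C=\R^n$, $\psi(s)=s/m$, $\alpha^0_t=\frac12$ for all $t$ and any $\sigma\in(0,2]$ (together with any admissible integer $M\ge 0$ and bounds satisfying $\alpha_{\min}\le\frac12\le\alpha_{\max}$), the inner iteration \eqref{min_u}--\eqref{linesearch} of \GP{} accepts $\alpha_t=\frac12$ and produces exactly $x^{t+1}=\frac1m\sum_{i=1}^m\bm\xi_i^t$ for the chosen projections $\bm\xi_i^t\in P_{D_i}(x^t)$, which is precisely the update \eqref{eq_avg_proj}. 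Thus the sequence $\{x^t\}$ in the statement coincides with a sequence generated by \GP{} for this choice of data and parameters.

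Next, I would check that the hypotheses of Theorem~\ref{th:subsequence} hold for this instance. The function $\psi(s)=s/m$ is level-bounded, continuous, concave and continuously differentiable on $\R_{++}$ with $\psi'_+\equiv\frac1m>0$ and $\psi(0)=0$, so $\psi\in\Theta$. For the horizon-cone condition, note that $C=\R^n$ is unbounded, so $C^\infty=\R^n$; consequently $C^\infty\cap\bigl(\bigcap_{i=1}^m D_i^\infty\bigr)=\bigcap_{i=1}^m D_i^\infty=\{0\}$ by the standing assumption of the corollary. Hence the requirement $C^\infty\cap(\bigcap_{i=1}^m D_i^\infty)=\{0\}$ of Theorem~\ref{th:subsequence} is met automatically.

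With the hypotheses verified, the three conclusions are read off from Theorem~\ref{th:subsequence}: part~(a) yields boundedness of $\{x^t\}$; part~(b) yields $\lim_{t\to\infty}\|x^{t+1}-x^t\|=0$; and part~(c) yields that every cluster point $x^*$ of $\{x^t\}$ satisfies \eqref{stationarypoint}. Specializing \eqref{stationarypoint} to $\psi'_+\equiv\frac1m$ and $N_{\R^n}(x^*)=\{0\}$ recovers the asserted stationarity of $x^*$ for the particular $\Psi+\delta_C$ in \eqref{sum_subproblem_concave} with $\psi(s)=s/m$ and $C=\R^n$.

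Since this is a corollary, there is no genuine obstacle; the only point requiring a moment's care is the translation step, namely confirming through the preceding proposition that the linesearch in \GP{} indeed accepts $\alpha_t=\frac12$ at every iteration, so that the \GP{} iterate equals the averaged-projection iterate, and that the arbitrary selection $\bm\xi_i^t\in P_{D_i}(x^t)$ implicit in \eqref{eq_avg_proj} is exactly the freedom already built into Step~1(a) of \GP{}. Once that identification is made, every assertion follows verbatim from Theorem~\ref{th:subsequence}.
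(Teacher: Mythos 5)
Your proposal is correct and follows exactly the route the paper intends: the corollary is obtained by using the preceding proposition to identify the averaged projection iteration with an instance of \GP{} (with $C=\R^n$, $\psi(s)=s/m$, $\alpha_t^0=\frac12$, $\sigma\in(0,2]$), verifying $\psi\in\Theta$ and that $C^\infty\cap\bigcap_{i=1}^m D_i^\infty=\bigcap_{i=1}^m D_i^\infty=\{0\}$, and then reading off parts (a)--(c) of Theorem~\ref{th:subsequence}. The points you flag for care (acceptance of $\alpha_t=\frac12$ by the linesearch, and the arbitrary selection $\bm\xi_i^t\in P_{D_i}(x^t)$ matching the freedom in Step~1(a)) are exactly the content of the paper's proposition, so nothing is missing.
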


\section{Global sequential convergence of \GP{} with $M = 0$}\label{sec5}

In this section, we study convergence of the whole sequence generated by \GP{} with $M = 0$ for solving \eqref{sum_subproblem_concave}. We consider two cases in Sections~\ref{sec5.1} and \ref{sec5.2}, respectively: (1) each $D_i$ is convex; (2) some $D_i$'s are possibly nonconvex and $C = \R^n$. We establish global convergence of the whole sequence generated by \GP{} with $M = 0$ in these two cases by assuming KL properties of suitable potential functions. Then, in Section~\ref{sec5.3}, we discuss a relationship between the KL properties used in Section~\ref{sec5.1} and Section~\ref{sec5.2}.

\subsection{Global sequential convergence of \GP{} with $M = 0$ for convex $D_i$}\label{sec5.1}
In this subsection, we assume that each $D_i$ is convex, but $C$ can still be possibly nonconvex.
We show in the next theorem the global convergence of the whole sequence generated by \GP{} with $M = 0$ under the assumption that $\Psi+\delta_C$ is a KL function.

\begin{theorem}\label{thm5.1}
Suppose that each $D_i$ is convex, $C^{\infty} \cap \bigcap_{i=1}^m D_i^{\infty}=\{0\}$ and $\Psi + \delta_C$ is a KL function, where $\Psi$ is defined in \eqref{sum_subproblem_concave} with $\psi\in \Theta$. Let $\{x^t\}$ be the sequence generated by \GP{} with $M = 0$. Then $\{x^t\}$ is globally convergent to a stationary point of $\Psi + \delta_C$.
\end{theorem}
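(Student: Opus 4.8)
The plan is to establish global sequential convergence via the standard three-ingredient KL framework (sufficient decrease, a relative-error/subgradient bound, and the KL inequality), as in \cite{AtBoReSo10,BolteSabach14}, using the results already proved in Theorem~\ref{th:subsequence}. I would work with the potential function $F := \Psi + \delta_C$, which in this subsection is the object assumed to be a KL function. The key structural simplification here is that each $D_i$ is convex, so each $x\mapsto d_{D_i}^2(x)$ is continuously differentiable with gradient $2(x - P_{D_i}(x))$; consequently the vector $g^t$ in \eqref{average_projection} is exactly $\nabla\Psi(x^t)$ (cf.\ \eqref{nablaPsi}), and $\Psi$ is smooth with $\nabla\Psi$ locally Lipschitz. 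This removes the subgradient ambiguity and lets me treat \GP{} essentially as a proximal-gradient-type method.

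First I would record the \emph{sufficient decrease} property. With $M=0$ the linesearch criterion \eqref{linesearch} reads $\Psi(x^{t+1}) - \Psi(x^t) \le -\frac{\sigma}{2}\|x^{t+1}-x^t\|^2$, and since every iterate lies in $C$ we have $F(x^{t+1}) - F(x^t) \le -\frac{\sigma}{2}\|x^{t+1}-x^t\|^2$; this is exactly the descent inequality needed. Theorem~\ref{th:subsequence} already gives that $\{x^t\}$ is bounded, that $\|x^{t+1}-x^t\|\to 0$, that $\lim_t \Psi(x^t)$ exists, and that every cluster point is stationary; moreover $\inf_t \alpha_t > 0$ and $\{1/\alpha_t\}$ is bounded above, say by $\bar\alpha^{-1}$ with $\bar\alpha := \inf_t\alpha_t > 0$. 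Next I would establish the \emph{relative error bound}. From the first-order optimality condition for the subproblem \eqref{min_u} (the convex case of \eqref{fis order1}) I get $0 \in \nabla\Psi(x^t) + \frac1{\alpha_t}(x^{t+1}-x^t) + N_C(x^{t+1})$, so that $\nabla\Psi(x^{t+1}) - \nabla\Psi(x^t) - \frac1{\alpha_t}(x^{t+1}-x^t) \in \nabla\Psi(x^{t+1}) + N_C(x^{t+1}) = \partial F(x^{t+1})$. Using the local Lipschitz continuity of $\nabla\Psi$ on the bounded set containing $\{x^t\}$ (with constant $L$) together with the bound $1/\alpha_t \le \bar\alpha^{-1}$, I obtain $\dist(0,\partial F(x^{t+1})) \le (L + \bar\alpha^{-1})\|x^{t+1}-x^t\|$, the required subgradient estimate.

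With these two inequalities in hand, I would invoke the uniformized KL property (Lemma~\ref{uni-kl}). Let $\Omega$ be the (nonempty, compact, by boundedness) set of cluster points of $\{x^t\}$; by Theorem~\ref{th:subsequence}(c) every point of $\Omega$ is stationary, hence in $\dom\partial F$, and since $\lim_t F(x^t) =: l_*$ exists, a standard continuity argument shows $F \equiv l_*$ on $\Omega$. Applying Lemma~\ref{uni-kl} on $\Omega$ yields $\phi\in\Xi_\nu$ and $\epsilon>0$ with $\phi'(F(x)-l_*)\,\dist(0,\partial F(x)) \ge 1$ for $x$ near $\Omega$ with $l_* < F(x) < l_*+\nu$. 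The concavity of $\phi$ then gives the telescoping estimate $\phi(F(x^t)-l_*) - \phi(F(x^{t+1})-l_*) \ge \phi'(F(x^t)-l_*)\,(F(x^t)-F(x^{t+1}))$; combining with the decrease and error bounds in the now-classical way produces $\sum_t \|x^{t+1}-x^t\| < \infty$, so $\{x^t\}$ is Cauchy and converges to some $x^*$. Finally, $x^*$ is a cluster point, hence stationary by Theorem~\ref{th:subsequence}(c).

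I expect the main obstacle to be the bookkeeping that sets up the KL machinery rather than any one hard estimate: specifically, verifying that the iterates eventually enter and remain in the uniformized KL neighborhood of $\Omega$ while satisfying $l_* < F(x^t) < l_* + \nu$ (one must handle the finite-termination case $F(x^t) = l_*$, where convergence is immediate, separately from the strict-inequality case), and then carrying out the summability induction so that the iterates never leave that neighborhood. The convexity of the $D_i$ is what makes the relative-error step clean; the genuinely delicate point is threading the decrease inequality, the error bound, and the concavity estimate for $\phi$ together to force absolute summability of the increments.
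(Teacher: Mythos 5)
Your proposal is correct and follows essentially the same route as the paper: the paper verifies exactly your three ingredients (the sufficient decrease from \eqref{linesearch} with $M=0$, the relative-error bound via $\omega^{t+1}=\nabla\Psi(x^{t+1})-\nabla\Psi(x^t)-\frac{1}{\alpha_t}(x^{t+1}-x^t)\in\partial(\Psi+\delta_C)(x^{t+1})$ using the local Lipschitz continuity of $\nabla\Psi$ and $\inf_t\alpha_t>0$, and the continuity condition from boundedness of the iterates) and then invokes the abstract convergence result \cite[Theorem~2.9]{Attouch-Bolte13} rather than re-deriving the uniformized-KL summability argument by hand. The only cosmetic difference is that you carry out the final telescoping step explicitly via Lemma~\ref{uni-kl}, which the cited theorem packages for you.
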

\begin{proof}
In view of Theorem~\ref{th:subsequence}, it suffices to show that the sequence $\{x^t\}$ is convergent. Since ${\rm dom}\partial(\Psi + \delta_C) = C$ thanks to the smoothness of $\Psi$, and $\Psi + \delta_C$ is a KL function by assumption, according to \cite[Theorem 2.9]{Attouch-Bolte13}, we only need to check that $\{x^t\}$ satisfies the conditions {\bf H1}, {\bf H2} and {\bf H3} there.

\textbf{H1}: Since $M=0$ and $\{x^t\}\subseteq C$, we see from \eqref{linesearch} that $\Psi(x^{t+1}) + \frac{\sigma}{2}\norm{x^{t+1} -x^t}^2 \leq \Psi(x^{t})$.

\textbf{H2}: We need to check that for each $t$ there exists $\omega^{t+1}\in \partial (\Psi+\delta_C)(x^{t+1})$ so that $\|\omega^{t+1}\|\leq b \norm{x^{t+1} - x^t}$ for some $b > 0$ independent of $t$.

To this end, we note first that $x^{t+1}$ is a minimizer of \eqref{min_u} when $\alpha = \alpha_{t}$. Hence, we have $-\nabla\Psi(x^t) - \frac{1}{\alpha_t}(x^{t+1} - x^t) \in N_C(x^{t+1})$. Now, define
\[
\omega^{t+1} := \nabla\Psi(x^{t+1})-\nabla\Psi(x^t) - \frac{1}{\alpha_t}(x^{t+1} - x^t).
\]
Then we have $\omega^{t+1}\in \nabla \Psi(x^{t+1}) + N_C(x^{t+1}) = \partial(\Psi+\delta_C)(x^{t+1})$, where the equality follows from \cite[Exercise\,8.8]{Rock_wets98}. On the other hand, note from \eqref{nablaPsi} that $\nabla \Psi$ is locally Lipschitz because $\psi'_+$ is Lipschitz.
Also, recall from Theorem \ref{th:subsequence}(a) that $\{x^t\}$ is bounded. Using these and the definition of $w^{t+1}$, we conclude that
\[
\|w^{t+1}\| = \left\|\nabla\Psi(x^{t+1})-\nabla\Psi(x^t) - \frac{1}{\alpha_t}(x^{t+1} - x^t)\right\| \le \left(c + \frac1{\alpha_0}\right)\|x^{t+1} - x^t\|,
\]
where $c$ is the Lipschitz continuity modulus of $\nabla \Psi$ on a compact set containing $\{x^t\}$, and $\alpha_0 := \inf_{t\geq 0}\, \alpha_t$, which is positive thanks to Theorem \ref{th:subsequence}(a). Thus, {\bf H2} holds.

\textbf{H3}: This follows from the boundedness of $\{x^t\}$ by Theorem \ref{th:subsequence}(a), the continuity of $\Psi$ and the closedness of $C$.
\end{proof}

Based on the assumptions of Theorem~\ref{thm5.1}, it is now routine (see \cite[Theorem~3.4]{AtBoReSo10} for a similar analysis) to establish the local convergence rate of the sequence generated by \GP{} with $M = 0$ under the additional assumption that $\Psi+\delta_C$ is a KL function with exponent $\theta\in [0,1)$. In particular, an exponent of $\theta=\frac12$ implies local linear convergence of the sequence generated.
Thus, in the next theorem, we discuss conditions on $\{C,D_1,\dots,D_m\}$ that will guarantee $\Psi + \delta_C$ to be a KL function with exponent $\frac12$. Specifically, we assume that
$\{C,D_1,\dots,D_m\}$ is a collection of closed convex sets that is boundedly linearly regular. Recall that a collection of closed convex sets $\{C,D_1,\dots,D_m\}$ is boundedly linearly regular if $C\cap \bigcap_{i=1}^m D_i\neq \emptyset$ and for every bounded set $B$, there exists $c > 0$ so that
\[
\dist \left(x,C\cap \bigcap_{i=1}^m D_i\right) \le c  \max\{ d_C(x),\max_{1\le i\le m} d_{D_i}(x)\}
\]
whenever $x\in B$. It is known that when $C$ and $D_i$ are polyhedra with nonempty intersection, then $\{C,D_1,\dots,D_m\}$ is boundedly linearly regular; see \cite[Theorem 5.6.2]{Bauschke_Borwein96}.

\begin{theorem}\label{kl_exponent_convex}
Suppose that $C$ and all $D_i$'s are convex and let $\Psi$ be defined in \eqref{sum_subproblem_concave} with $\psi\in \Theta$. If $\{C,D_1, \cdots, D_m\}$ is boundedly linearly regular, then $\Psi+\delta_C$ is a KL function with exponent $\frac{1}{2}$.
\end{theorem}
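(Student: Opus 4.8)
The plan is to verify the KL property with exponent $\frac12$ at every point of $\dom\partial(\Psi+\delta_C)$, which equals $C$ because $\Psi$ is continuously differentiable: each $d_{D_i}^2$ is $C^1$ with $\nabla d_{D_i}^2(x)=2(x-P_{D_i}(x))$ since $D_i$ is convex, and $\psi'_+$ is continuous, so $\nabla\Psi$ is given by \eqref{nablaPsi}. A useful preliminary is that, on any bounded set $B$, the values $d_{D_i}^2(x)$ range over a compact interval, so $\psi'_+(d_{D_i}^2(x))\in[\mu,L]$ for some $0<\mu\le L$; combined with concavity and $\psi(0)=0$ this gives $\mu t\le\psi(t)\le Lt$ on that range, hence $\mu\sum_i d_{D_i}^2(x)\le\Psi(x)\le L\sum_i d_{D_i}^2(x)$ on $B$. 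Since bounded linear regularity forces $S:=C\cap\bigcap_{i=1}^m D_i\neq\emptyset$ and $\psi(t)=0$ only at $t=0$, the set $S$ is exactly the set of minimizers of $\Psi+\delta_C$, on which the function takes the value $0$. I would then treat the two cases $\hat x\in S$ and $\hat x\in C\setminus S$ separately.

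The cornerstone observation is that $C\setminus S$ contains no stationary points. Indeed, for $x^*\in S\subseteq D_i$, the projection inequality for the convex set $D_i$ gives $\inner{x-P_{D_i}(x)}{x^*-x}\le-d_{D_i}^2(x)$, so $\inner{\nabla\Psi(x)}{x^*-x}\le-2\sum_i\psi'_+(d_{D_i}^2(x))d_{D_i}^2(x)$, which is strictly negative whenever $x\notin S$. If such an $x$ were stationary, then $-\nabla\Psi(x)\in N_C(x)$ and convexity of $C$ would force $\inner{\nabla\Psi(x)}{x^*-x}\ge0$, a contradiction. Consequently, for $\hat x\in C\setminus S$ we have $\dist(0,\partial(\Psi+\delta_C)(\hat x))>0$; since $x\mapsto\dist(0,\partial(\Psi+\delta_C)(x))$ is lower semicontinuous (outer semicontinuity of the limiting subdifferential plus continuity of $\Psi$ on $C$), it stays bounded below by some $\delta>0$ near $\hat x$. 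Taking $\phi(s)=cs^{1/2}$, whose derivative $\frac c2 s^{-1/2}$ blows up as $s\downarrow0$, makes \eqref{haha} hold for all $x$ close enough to $\hat x$ with $0<\Psi(x)-\Psi(\hat x)<\nu$; thus the exponent $\frac12$ is obtained for free at non-minimizers.

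The substantive case is $\hat x\in S$, where $\Psi(\hat x)=0$. Fix a bounded convex neighborhood $B$ of $\hat x$, with $\mu,L$ and regularity constant $c_B$ as above. For $x\in C\cap B$ with $\Psi(x)>0$, pick any $v\in\partial(\Psi+\delta_C)(x)=\nabla\Psi(x)+N_C(x)$ and set $\bar x:=P_S(x)$ (well defined since $S$ is closed convex). As $v-\nabla\Psi(x)\in N_C(x)$ and $\bar x\in C$, the normal-cone inequality gives $\inner{v}{\bar x-x}\le\inner{\nabla\Psi(x)}{\bar x-x}$, and the projection bound above (with $x^*=\bar x$) yields $\inner{\nabla\Psi(x)}{\bar x-x}\le-2\mu\sum_i d_{D_i}^2(x)$. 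Cauchy--Schwarz then gives $\norm{v}\,\dist(x,S)\ge2\mu\sum_i d_{D_i}^2(x)$. Now I would invoke bounded linear regularity: since $d_C(x)=0$, it gives $\dist(x,S)\le c_B\max_i d_{D_i}(x)$, so $\sum_i d_{D_i}^2(x)\ge\max_i d_{D_i}^2(x)\ge c_B^{-2}\dist(x,S)^2$, whence $\norm{v}\ge\frac{2\mu}{c_B^2}\dist(x,S)$. Finally $S\subseteq D_i$ gives $d_{D_i}(x)\le\dist(x,S)$, hence $\Psi(x)\le Lm\,\dist(x,S)^2$; substituting $\dist(x,S)\ge(\Psi(x)/(Lm))^{1/2}$ produces $\dist(0,\partial(\Psi+\delta_C)(x))\ge\frac{2\mu}{c_B^2\sqrt{Lm}}\,\Psi(x)^{1/2}$, which is precisely the KL inequality with exponent $\frac12$ at $\hat x$.

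Combining the two cases shows $\Psi+\delta_C$ satisfies the KL property with exponent $\frac12$ at every point of $C$, i.e.\ it is a KL function with exponent $\frac12$. I expect the main obstacle to be the error bound in the case $\hat x\in S$: the function $\Psi$ is nonconvex (a concave $\psi$ composed with the convex maps $d_{D_i}^2$), so one cannot simply quote the convex equivalence between quadratic growth and metric subregularity of the subdifferential. The point that makes the direct argument work is that $\psi'_+$ is bounded below by a positive constant $\mu$ near $S$, so the projection inequalities still furnish genuine descent toward $S$, after which bounded linear regularity upgrades $\max_i d_{D_i}(\cdot)$ to $\dist(\cdot,S)$ and closes the estimate. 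Care is also needed to keep all constants ($\mu$, $L$, $c_B$) tied to a single bounded neighborhood, so that the neighborhood ${\cal N}$, the threshold $\nu$, and the function $\phi$ required in the KL definition can be specified consistently.
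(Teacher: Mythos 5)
Your proof is correct and follows essentially the same route as the paper: both arguments reduce the verification to points of $C\cap\bigcap_i D_i$ (you do this by hand via the strict descent inequality and lower semicontinuity of $\dist(0,\partial(\Psi+\delta_C)(\cdot))$, the paper by citing \cite[Lemma~2.1]{LiPong17}), and both derive the key error bound $\dist(x,C\cap\bigcap_i D_i)\lesssim\dist(0,\partial(\Psi+\delta_C)(x))$ from the convex projection inequality $\langle P_{D_i}(x)-x,y-x\rangle\ge d_{D_i}^2(x)$, the normal-cone inequality and bounded linear regularity. The only cosmetic differences are that the paper works with normalized weights $\omega_i(x)$ rather than a uniform lower bound $\mu$ on $\psi'_+$, and bounds $\Psi(x)-\Psi(\bar x)$ via the descent lemma for $\nabla\Psi$ plus a second application of bounded linear regularity, whereas you use the slightly more elementary estimate $\psi(t)\le\psi'_+(0)\,t$ together with $d_{D_i}(x)\le\dist(x,C\cap\bigcap_i D_i)$.
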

\begin{proof}
For notational simplicity, we write $D= \bigcap_{i=1}^m\,D_i$. We first show that the set of stationary points of $\Psi +\delta_C$, denoted by $\cal X$, is $C\cap D$. Note that $C\cap D \subseteq \cal X$ can be shown by a direct verification using the definition of stationary points in \eqref{stationarypoint}. Conversely, suppose that $\bar x\in \cal X$, i.e., it satisfies \eqref{stationarypoint}. Note that $d^2_{D_i}$ is convex due to the convexity of $D_i$, $C$ is convex by assumption and $\psi_+' > 0$ on $\R_+$. Hence, the function $x\mapsto \sum_{i=1}^{m} [\psi_+'(d_{D_i}^2(\bar x))d^2_{D_i}(x)] + \delta_C(x)$ is a convex function. Using this, \eqref{stationarypoint} and $\nabla d^2_{D_i}(x) = 2 (x - P_{D_i}(x))$, we see further that
\begin{align*}
\bar x\in \argmin_{x\in C}\sum_{i=1}^{m} \psi_+'(d_{D_i}^2(\bar x))d^2_{D_i}(x) = C\cap D,
\end{align*}
where the equality holds because $\psi_+' > 0$ on $\R_+$ and $C\cap D \neq\emptyset$. Thus, we have shown that $C\cap D = \cal X$.

Since $\mathcal{X} =C\cap D = \argmin(\Psi + \delta_C)$, in view of \cite[Lemma 2.1]{LiPong17} and Definition \ref{KL-exponenct}, we only need to check that, for any fixed $\bar{x}\in C\cap D$, there exist positive numbers $c$ and $r$ so that
\[
\dist(0,\partial (\Psi + \delta_C)(x)) \geq c (\Psi(x) - \Psi(\bar{x}))^{\frac{1}{2}}\text{~ whenever ~} x\in C \cap \mathbf{B}(\bar{x},r).
\]

To this end, fix any $\bar x\in C\cap D$ and any $r>0$. 
Since $\psi'_+(d^2_{D_i}(\cdot))$ is continuous on $\mathbf{B}(\bar{x},r)$ and $\psi'_+ >0$ on $\R_+$, we see that there exist $\nu_1$ and $\nu_2$ such that $0 < \nu_1 \leq \psi'_+(d^2_{D_i}(x)) \leq \nu_2$ for all $x \in \mathbf{B}(\bar{x},r)$ and all $i=1,\ldots,m$. Now, define $\omega_i(x) := \frac{\psi'_+(d^2_{D_i}(x))}{\sum_{j=1}^m\, \psi'_+(d^2_{D_j}(x))}$.
Then we have for all $x \in \mathbf{B}(\bar{x},r)$ that
\begin{equation}\label{w_property}
\sum_{i=1}^m\, \omega_i(x) =1\ \ \text{ and }\ \ \min_{1\le i\le m}\omega_i(x) \geq \nu := \frac{\nu_1}{m \nu_2} >0.
\end{equation}

Let $x \in C\cap \mathbf{B}(\bar{x},r)$ and $y \in C \cap D$. Since $y\in D_i$ for each $i$, it follows that
\begin{align}\label{ineq:4}
\begin{split}
\langle P_{D_i}(x) -x , y -x \rangle &= \langle P_{D_i}(x) -x , y - P_{D_i}(x) + P_{D_i}(x) - x \rangle \geq  d^2_{D_i}(x),
\end{split}
\end{align}
where the inequality holds because $\langle P_{D_i}(x) -x , y - P_{D_i}(x)\rangle\ge 0$, which is a consequence of the convexity of $D_i$ and the definition of $P_{D_i}(x)$. Now, for any $\zeta \in N_C(x)$, we have
\begin{align*}
&\sum_{i=1}^m\,\omega_i(x) d^2_{D_i}(x) \leq  \left\langle \sum_{i=1}^m\,\omega_i(x) P_{D_i}(x) - x , y -x \right\rangle \\
&= \left\langle \sum_{i=1}^m\,\omega_i(x) P_{D_i}(x) - x - \zeta, y -x \right\rangle + \left\langle \zeta, y -x \right\rangle \\
& \leq  \left\| \sum_{i=1}^m\,\omega_i(x) P_{D_i}(x) - x - \zeta\right\| \| y -x \|,
\end{align*}
where the first inequality is due to \eqref{w_property} and \eqref{ineq:4}, the second inequality holds because $\zeta \in N_C(x)$ and $y\in C$. Taking infimum over $y \in C\cap D$ and $\zeta\in N_C(x)$ in the above inequality and noting that
\[
\partial(\Psi+\delta_C)(x) = 2 \left(\sum_{j=1}^{m}\psi'_+ (d^2_{D_j}(x))\right) \left[x - \sum_{i=1}^{m}\omega_i(x) P_{D_i}(x)+ N_C(x)\right],
\]
we obtain further that
\[
\sum_{i=1}^m\,\omega_i(x) d^2_{D_i}(x) \leq c_1\dist (0,\partial(\Psi+\delta_C)(x)) \cdot d_{C\cap D}(x),
\]
where $c_1 = (2m \nu_1)^{-1}$. Using this together with the bounded linear regularity of $\{C,D_1, \cdots, D_m\}$, we see further that for any $x\in C\cap \mathbf{B}(\bar{x},r)$, we have
\[
\begin{aligned}
&\nu \max_{1\leq j \leq m}\{d^2_{D_j}(x)\}\le  \sum_{i=1}^m\,\omega_i(x) d^2_{D_i}(x)\leq c_1 \dist (0,\partial(\Psi+\delta_C)(x)) \cdot d_{C\cap D} (x)\\
&\le c_2\dist (0,\partial(\Psi+\delta_C)(x))\max_{1\leq j \leq m}\{d_{D_j}(x)\}
\end{aligned}
\]
for some constant $c_2 > 0$, where the first inequality follows from \eqref{w_property}, and the last inequality holds because $\{C,D_1, \cdots, D_m\}$ is boundedly linearly regular and $x\in C$.
Thus, we have for all $x\in C\cap \mathbf{B}(\bar{x},r)$ that
\begin{align}\label{ineq:5}
\max_{1\leq j \leq m}\{d_{D_j}(x)\}\le \frac{c_2}{\nu} \dist (0,\partial (\Psi + \delta_C)(x)).
\end{align}

On the other hand, note that for any $x \in C\cap \mathbf{B}(\bar{x},r)$, we have $P_{C\cap D}(x) \in \mathbf{B}(\bar{x},r)$ since the projection operator is nonexpansive and $\bar x\in C\cap D$. Moreover, note from \eqref{nablaPsi} that $\nabla \Psi(x)$ is locally Lipschitz because $\psi'_+$ is Lipschitz. Thus, we deduce further that for any $x \in C\cap \mathbf{B}(\bar{x},r)$,
\begin{align*}
&\Psi(x) - \Psi(\bar{x}) = \Psi(x) - \Psi(P_{C\cap D}(x))\\
&\leq \langle \nabla\,\Psi(P_{C\cap D}(x)), x - P_{C\cap D}(x) \rangle + \frac{L_1}{2} \| x - P_{C\cap D}(x) \|^2\\
&= \frac{L_1}{2} d_{C\cap D}^2(x)\le \frac{L_1}{2}c_3 \max_{1\leq j \leq m}\{d^2_{D_j}(x)\} \leq c_4 \dist^2(0,\partial(\Psi + \delta_C)(x)),
\end{align*}
where $L_1$ is the Lipschitz continuity modulus of $\nabla \Psi$ on $\mathbf{B}(\bar{x},r)$, the first equality holds because $\Psi(\bar{x}) = \Psi(P_{C\cap D}(x)) = 0$, the second equality holds because $\nabla\,\Psi(P_{C\cap D}(x)) = 0$ by direct computation, the existence of $c_3$ follows from the bounded linear regularity of $\{C,D_1, \cdots, D_m\}$ and $x\in C$, and the existence of $c_4$ follows from \eqref{ineq:5}. This completes the proof.
\end{proof}

\subsection{Global sequential convergence of \GP{} with $M = 0$ for nonconvex $D_i$}\label{sec5.2}

In this subsection, we assume that $C=\R^n$ but allow each $D_i$ to be possibly nonconvex:
note that the function $d^2_{D_i}$ is not smooth when $D_i$ is nonconvex. We will study global convergence of the whole sequence generated by \GP{} with $M = 0$ for solving \eqref{sum_subproblem_concave} in this case.

Our analysis below will be based on the following potential function:
\begin{align}\label{def:F}
\F(x,{\bm\xi},u):=  \sum_{i=1}^{m}\left[ \rho^* (u_i) - u_i\norm{x- \bm\xi_i}^2 + \delta_{D_i}(\bm\xi_i) \right],
\end{align}
where $\bm\xi=(\bm\xi_1,\cdots,\bm\xi_m)\in \R^{mn}$ with each $\bm\xi_i \in \mathbb{R}^n$, $u\in \R^m$, $\rho$ is the convex function defined by
\begin{equation}\label{liftingrho}
\rho(s):=
\begin{cases}
-\psi(s) & {\rm if}\ s \geq 0,\\
s^2-l s  & {\rm otherwise},
\end{cases}
\end{equation}
with $\psi\in \Theta$ (see \eqref{sum_subproblem_concave}) and $l := \psi'_+(0)>0$, and $\rho^*$ is the convex conjugate. One can show that
\begin{equation}\label{rho'}
\rho'(s)=
\begin{cases}
-\psi_+'(s)& {\rm if}\ s \geq 0,\\
 2s-l     & {\rm otherwise}.
\end{cases}
\end{equation}
We then see immediately from \eqref{liftingrho}, \eqref{rho'} and $\psi\in \Theta$ that $\rho$ is a continuously differentiable nonincreasing convex function on $\mathbb{R}$ and $\rho'$ is Lipschitz continuous on $\mathbb{R}$.

We collect some essential properties of the potential function $\F$ that will be useful in our subsequent analysis. First, fix any $x\in \R^n$. For each $i$, pick any $\widehat {\bm \xi_i}\in P_{D_i}(x)$ and define $\widehat u_i := \rho'(d^2_{D_i}(x))$. Then we see from \eqref{rho'} that
\begin{align}\label{def:u}
\widehat u_i = \rho'(d^2_{D_i}(x)) = -\psi_+'(d^2_{D_i}(x)) < 0.
\end{align}
From this, we deduce further that
\begin{equation}\label{eq:psiand F}
  \F(x,\widehat{\bm \xi},\widehat u) = \sum_{i=1}^{m}\left[ \rho^* (\widehat u_i) - \widehat u_id^2_{D_i}(x) \right] = \sum_{i=1}^{m}\left[ -\rho(d^2_{D_i}(x))\right]= \Psi(x)\geq 0,
\end{equation}
where the first equality holds because $\widehat {\bm \xi_i}\in P_{D_i}(x)$, while the second equality follows from \eqref{def:u} and \cite[Proposition~11.3]{Rock_wets98}.
Finally, using \cite[Exercise~8.8]{Rock_wets98} and \cite[Proposition~10.5]{Rock_wets98}, we have the following formula for the subdifferential of $\F$ at any $(x,\bm\xi,u) \in \R^n\times \R^{mn}\times \R^n$:
\begin{equation}\label{partial:F}
\partial \F(x,\bm\xi,u)= \left[
\begin{split}
&-2\sum_{i=1}^{m} u_i(x- \bm\xi_i)\\
&\left[ - 2u_i (\bm\xi_i - x) + N_{D_i}(\bm\xi_i) \right]_{i=1}^m \\
&\left[ \partial \rho^*(u_i) - \|x- \bm\xi_i\|^2\right]_{i=1}^m
\end{split}
\right].
\end{equation}

We next bound the distance from the origin to $\partial \F(x,\bm\xi,u)$ along a certain sequence.
\begin{lemma}
Suppose that $\bigcap_{i=1}^m D_i^{\infty}=\{0\}$, $C = \R^n$ and $\psi \in \Theta$. Let $\F$ be defined in \eqref{def:F} and let $\{x^t\}$ and $\{{\bm\xi}_i^t\}$, $i=1,\ldots,m$, be the sequences generated by \GP{}. Define, for each $t$, $u^t := [\rho'(d_{D_i}^2(x^t))]_{i=1}^m$. Then there exists $c >0$ such that for all $t\ge 0$, we have
\begin{align}\label{parti:bound}
\dist (0,\partial \F(x^{t},{\bm\xi}^t,u^t))&\leq c \norm{x^{t+1}-x^{t}}.
\end{align}
\end{lemma}
\begin{proof}
Since $C = \R^n$ and $x^{t+1}$ is a minimizer of \eqref{min_u} when $\alpha = \alpha_{t}$, we have, using the definition of $u^t$ and the expression of $\rho'$ in \eqref{rho'}, that
\[
0 = g^t + \frac{1}{\alpha_{t}}(x^{t+1} - x^{t})=  -2\sum_{i=1}^{m}u_i^{t}(x^{t}-{\bm\xi}_i^{t}) + \frac{1}{\alpha_{t}}(x^{t+1} - x^{t}).
\]
Combining this with \eqref{partial:F}, we deduce that
\begin{align*}
\partial_{x} \F(x^{t},{\bm\xi}^t,u^t)= -2 \sum_{i=1}^{m} u_i^t(x^t- {\bm\xi}_i^t) = - \frac{1}{\alpha_{t}}(x^{t+1} - x^{t}).
\end{align*}
Since $\underline{\alpha} := \inf_{t\ge 0}\alpha_t >0$ according to Theorem \ref{th:subsequence}(a), we obtain further that
\begin{align}\label{convergence_x1}
\dist (0,\partial_{x} \F(x^{t},{\bm\xi}^t,u^t))&\leq {\underline{\alpha}}^{-1}\norm{x^{t+1} - x^{t}}.
\end{align}

Next, recall that ${\bm\xi}_i^t \in P_{D_i}(x^t)$, which implies $x^t - {\bm\xi}_i^t \in N_{D_i}({\bm\xi}_i^t)$, thanks to \cite[Example~6.16]{Rock_wets98}. Moreover, using the expression of $\rho'$ in \eqref{rho'} and the definition of $u^t$ together with the assumption that $\psi\in \Theta$, we have $-u_i^t = -\rho'(d_{D_i}^2(x^t)) = \psi_+'(d^2_{D_i}(x^t)) > 0$. Hence, $-2u_i^t (x^t - {\bm\xi}_i^t) \in N_{D_i}({\bm\xi}_i^t)$. This together with \eqref{partial:F} gives $0\in \partial_{{\bm\xi}}\F(x^t,{\bm\xi}^t,u^t)$. Thus,
\begin{align}\label{convergence_xi}
\dist (0,\partial_{{\bm\xi}} \F(x^{t},{\bm\xi}^t,u^t)) = 0.
\end{align}

Finally, since $u_i^t = \rho'(d_{D_i}^2(x^t))$ implies $d^2_{D_i}(x^t)\in\partial \rho^*(u_i^t)$ according to \cite[Proposition~11.3]{Rock_wets98} and note that ${\bm\xi}_i^t \in P_{D_i}(x^t)$, we see that $\|x^t - {\bm\xi}_i^t\|^2\in \partial \rho^*(u_i^t)$. This together with \eqref{partial:F} gives $0\in \partial_u \F(x^t,{\bm\xi}^t,u^t)$. The desired bound \eqref{parti:bound} now follows immediately from this, \eqref{convergence_x1} and \eqref{convergence_xi}.
\end{proof}

We are now ready to present our convergence analysis.

\begin{theorem}\label{KL_nonconvex}
Suppose that $\bigcap_{i=1}^m D_i^{\infty}=\{0\}$, $C = \R^n$, $\psi \in \Theta$ and $\F$ in \eqref{def:F} is a KL function. Let $\{x^t\}$ be the sequence generated by \GP{} with $M = 0$. Then $\{x^t\}$ is globally convergent to a stationary point of $\Psi$.
\end{theorem}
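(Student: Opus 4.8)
The plan is to run the standard Kurdyka--{\L}ojasiewicz descent argument, but on the lifted potential $\F$ of \eqref{def:F} evaluated along the auxiliary sequence $z^t := (x^t,\bm\xi^t,u^t)$, where $u^t = [\rho'(d^2_{D_i}(x^t))]_{i=1}^m$ as in the preceding lemma. The crucial bookkeeping observation is \eqref{eq:psiand F}: since each $\bm\xi_i^t\in P_{D_i}(x^t)$ and $u_i^t = \rho'(d^2_{D_i}(x^t))$, we have $\F(z^t) = \Psi(x^t)$ for every $t$. Hence the line-search inequality \eqref{linesearch} with $M=0$, which reads $\Psi(x^{t+1}) + \frac{\sigma}{2}\|x^{t+1}-x^t\|^2 \le \Psi(x^t)$, transfers verbatim into a sufficient-decrease statement for $\F$ along $\{z^t\}$, namely $\F(z^{t+1}) + \frac{\sigma}{2}\|x^{t+1}-x^t\|^2 \le \F(z^t)$. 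Together with the relative-error bound \eqref{parti:bound}, i.e. $\dist(0,\partial\F(z^t)) \le c\|x^{t+1}-x^t\|$, these are exactly the two estimates needed to drive a KL telescoping --- and, notably, both are expressed in the single scalar $\|x^{t+1}-x^t\|$, so the (possibly discontinuous) increments in the lifted variables $\bm\xi$ and $u$ never have to be controlled.

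First I would record that $\{z^t\}$ is bounded: $\{x^t\}$ and $\{\bm\xi_i^t\}$ are bounded by Theorem~\ref{th:subsequence}(a), and $\{u^t\}$ is bounded because each $d^2_{D_i}(x^t)$ is bounded and $\rho'$ is continuous. Let $\Omega$ denote the (compact, nonempty) set of cluster points of $\{z^t\}$; then $d_\Omega(z^t)\to 0$ by the usual argument for bounded sequences. Next I would identify the cluster points: if $z^{t_j}\to(\bar x,\bar{\bm\xi},\bar u)$, then arguing exactly as in the proof of Theorem~\ref{th:subsequence}(c) gives $\bar{\bm\xi}_i\in P_{D_i}(\bar x)$, while continuity of $\rho'\circ d^2_{D_i}$ gives $\bar u_i = \rho'(d^2_{D_i}(\bar x))$; hence \eqref{eq:psiand F} yields $\F(\bar x,\bar{\bm\xi},\bar u) = \Psi(\bar x)$. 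Since $\Psi(x^{t_j})\to\zeta^* := \lim_t\Psi(x^t)$ (which exists by Theorem~\ref{th:subsequence}(b)) and $\Psi$ is continuous, it follows that $\F\equiv\zeta^*$ on $\Omega$, and $\Omega\subseteq\dom\partial\F$ because $\partial\F(\bar z)\neq\emptyset$ (the $\bm\xi$- and $u$-components of the origin lie in the respective partial subdifferentials, as verified in the preceding lemma). With $\F$ assumed to be a KL function, I can then invoke the uniformized KL property (Lemma~\ref{uni-kl}) on $\Omega$ to obtain $\epsilon>0$, $\nu\in(0,\infty]$ and $\phi\in\Xi_\nu$ with $\phi'(\F(z)-\zeta^*)\dist(0,\partial\F(z))\ge 1$ whenever $d_\Omega(z)<\epsilon$ and $\zeta^*<\F(z)<\zeta^*+\nu$.

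The core estimate is then the standard concavity telescoping, applied at index $t$ rather than $t+1$. For all large $t$ we have $d_\Omega(z^t)<\epsilon$ and $\F(z^t)<\zeta^*+\nu$. If $\F(z^t)=\zeta^*$ for some such $t$, monotonicity forces $\F(z^s)=\zeta^*$, hence $\Psi(x^s)$ constant, for all $s\ge t$, and the sufficient-decrease inequality then gives $x^{s+1}=x^s$, so $\{x^t\}$ is eventually constant and trivially convergent. Otherwise $\zeta^*<\F(z^t)<\zeta^*+\nu$ for all large $t$, and setting $\Delta_t := \phi(\F(z^t)-\zeta^*)$, concavity of $\phi$ together with the KL inequality and \eqref{parti:bound} gives
\[
\Delta_t - \Delta_{t+1} \ge \phi'(\F(z^t)-\zeta^*)\,[\F(z^t)-\F(z^{t+1})] \ge \frac{\frac{\sigma}{2}\|x^{t+1}-x^t\|^2}{c\,\|x^{t+1}-x^t\|} = \frac{\sigma}{2c}\|x^{t+1}-x^t\|.
\]
Summing over $t$ and using $\Delta_t\ge 0$ shows $\sum_t\|x^{t+1}-x^t\|<\infty$, so $\{x^t\}$ is Cauchy and converges to some $x^*$; by Theorem~\ref{th:subsequence}(c) this limit is a stationary point of $\Psi+\delta_C = \Psi$ (recall $C=\R^n$), completing the argument.

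The step I expect to be the main obstacle is the clean transfer of the KL machinery from $\Psi$ on $\R^n$ to the lifted function $\F$ --- specifically, verifying that $\F$ is genuinely constant on the cluster set $\Omega$ and that $\Omega\subseteq\dom\partial\F$, since $\F$ lives on $\R^n\times\R^{mn}\times\R^m$ and the projection variables $\bm\xi_i^t$ may jump because the $D_i$ are nonconvex. The whole point of the lifting is that \eqref{eq:psiand F} and \eqref{parti:bound} let the telescoping be written purely in terms of $\|x^{t+1}-x^t\|$, so that these discontinuous increments never need to be bounded; making sure this identification is airtight is where the care is required.
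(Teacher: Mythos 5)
Your proposal is correct and follows essentially the same route as the paper: lift to the potential $\F$ along $(x^t,\bm\xi^t,u^t)$, use \eqref{eq:psiand F} to transfer the linesearch decrease into \eqref{decreas:F}, combine with \eqref{parti:bound} and the uniformized KL property on the compact cluster set $\Omega$, and telescope. The only (harmless) difference is that you establish constancy of $\F$ on $\Omega$ by identifying the cluster points directly ($\bar{\bm\xi}_i\in P_{D_i}(\bar x)$, $\bar u_i=\rho'(d^2_{D_i}(\bar x))$) and invoking \eqref{eq:psiand F}, whereas the paper argues via a chain of inequalities using the minimizing property of $u^t$ together with lower semicontinuity of $\F$; you also write out the telescoping step that the paper delegates to the literature as routine.
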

\begin{proof}
In view of Theorem~\ref{th:subsequence}, it suffices to show that the sequence $\{x^t\}$ is convergent.
We first note that $\max_{[t-M]_+ \leq i \leq t} \Psi(x^{i}) \equiv \Psi(x^{t})$ since $M=0$. Using this and \eqref{linesearch}, we have
\begin{align}\label{decreas:F}
\F(x^{t+1},{\bm\xi}^{t+1},u^{t+1}) - \F(x^{t},{\bm\xi}^{t},u^{t}) = \Psi(x^{t+1}) -\Psi(x^t) \leq - \frac{\sigma}{2}\norm{x^{t+1} - x^t}^2,
\end{align}
where $\{{\bm\xi}^t\}$ is generated by \GP{} and $u^t := [\rho'(d_{D_i}^2(x^t))]_{i=1}^m$, and the first equality follows from \eqref{eq:psiand F} and the definitions of $\widehat{\bm\xi}$, ${\bm\xi}^t$, $\widehat u$ and $u^t$.

Next, we note from Theorem \ref{th:subsequence}(a) and the definition of $u^t$ that the sequence $\{(x^t,{\bm\xi}^t,u^t)\}$ is bounded.
Let $\Omega$ be the set of cluster points of $\{(x^t,{\bm\xi}^t,u^t)\}$. Then $\Omega$ is nonempty and compact. We now show that $\F$ is constant on $\Omega$. To this end, we first observe from \eqref{eq:psiand F} that $\F(x^t,{\bm\xi}^t,u^t) = \Psi(x^t)$ for all $t$. This together with Theorem \ref{th:subsequence}(b) implies that $\lim_{t\rightarrow \infty}\F(x^t,{\bm\xi}^t,u^t) = l_*$ for some $l_*$. Now, take any $(x^*,{\bm\xi}^*,u^*)\in \Omega$. Then there exists a convergent subsequence $\{(x^{t_j},{\bm\xi}^{t_j},u^{t_j})\}$ converging to it. Using \eqref{decreas:F}, we have
\[
\begin{aligned}
  &\F(x^{t_j},{\bm\xi}^{t_j},u^{t_j}) \le \F(x^{t_j-1},{\bm\xi}^{t_j-1},u^{t_j-1}) - \frac\sigma{2}\|x^{t_j} - x^{t_j-1}\|^2\\
  &\le \F(x^{t_j-1},{\bm\xi}^{t_j-1},u^*) - \frac\sigma{2}\|x^{t_j} - x^{t_j-1}\|^2\le \F(x^{t_j-1},{\bm\xi}^*,u^*) - \frac\sigma{2}\|x^{t_j} - x^{t_j-1}\|^2,
\end{aligned}
\]
where the second inequality follows from the definition of $\{u^t\}$ so that $u^t$ is a minimizer of $u\mapsto \F(x^t,{\bm\xi}^t,u)$, while the last inequality follows from the facts that ${\bm\xi}^{t_j-1}_i\in P_{D_i}(x^{t_j-1})$ and that $u^t_i < 0$ (so that $u^*_i\le 0$) for all $i$. Passing to the limit in the above inequality and invoking the definition of $l_*$ and Theorem~\ref{th:subsequence}(b), we deduce that
\[
l_*=\lim_{j\to\infty}\F(x^{t_j},{\bm\xi}^{t_j},u^{t_j}) \le \F(x^*,{\bm\xi}^*,u^*).
\]
Since the converse inequality is an immediate consequence of the lower semicontinuity of $\F$, we conclude that $\F \equiv l_*$ on $\Omega$.

Now, the global convergence of $\{x^t\}$ can be proved based on $\F \equiv l_*$ on the nonempty compact set $\Omega\subseteq {\rm dom}\partial \F$, \eqref{decreas:F}, \eqref{parti:bound}, Lemma~\ref{uni-kl} and the KL assumption on $\F$. The proof is routine (see, for example, \cite[Theorem~1]{BolteSabach14}) and we omit the proof for brevity.
\end{proof}

One can also establish local convergence rate of the sequence generated by \GP{} with $M = 0$ based on the assumptions of Theorem~\ref{KL_nonconvex} and the additional assumption that $\F$ is a KL function with exponent $\theta\in [0,1)$; we refer the readers to \cite[Theorem~3.4]{AtBoReSo10} for a similar analysis.

\subsection{Relating the KL exponent of $\Psi$ and $\F$ when $C = \R^n$}\label{sec5.3}

Notice that global convergence of the sequence generated by \GP{} with $M = 0$ was established in Theorems~\ref{thm5.1} and \ref{KL_nonconvex} under two different KL assumptions. Theorem~\ref{thm5.1} studied the case when each $D_i$ is convex and requires $\Psi+\delta_C$ in \eqref{sum_subproblem_concave} to be a KL function, while Theorem~\ref{KL_nonconvex} studied the case when some $D_i$'s are possibly nonconvex and $C = \R^n$, and requires $\F$ in \eqref{def:F} to be a KL function. In this section, we study a relationship between these two KL assumptions.

We start with the following lemma, which describes how $\F$ is related to the stationary points of $\Psi$ in \eqref{sum_subproblem_concave} when $C = \R^n$ and each $D_i$ is convex.

\begin{lemma}\label{lem:5.4}
Suppose that each $D_i$ is convex, $C = \R^n$, $\Psi$ is defined in \eqref{sum_subproblem_concave} with $\psi\in \Theta$ and $\F$ is defined in \eqref{def:F}. If $0\in \partial \F(\bar{x}, \bar{{\bm\xi}}, \bar{u})$, then $\F(\bar{x}, \bar{{\bm\xi}}, \bar{u}) =  \Psi(\bar{x})$, $\nabla \Psi(\bar x) = 0$ and
\begin{align}\label{eq:stationary}
\sum_{i=1}^{m} \bar{u}_i(\bar{x}- \bar{{\bm\xi}}_i) = 0,\  \bar{{\bm\xi}}=[P_{D_i}(\bar{x})]_{i=1}^m\ {\rm and}\ \bar{u}=[\rho'(d^2(\bar{x},D_i))]_{i=1}^m < 0.
\end{align}
\end{lemma}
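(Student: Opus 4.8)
The plan is to unpack the optimality condition $0\in\partial\F(\bar x,\bar{\bm\xi},\bar u)$ componentwise using the subdifferential formula \eqref{partial:F}, specialized to the convex case $C=\R^n$, and then to read off each of the three conclusions in \eqref{eq:stationary} together with the two scalar identities. Since \eqref{partial:F} expresses $\partial\F$ as a product set with three blocks (the $x$-block, the $\bm\xi$-block, and the $u$-block), requiring the origin to lie in it forces the origin into each block separately.

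First I would treat the $x$-block: $0\in\partial_x\F(\bar x,\bar{\bm\xi},\bar u)$ reads $-2\sum_{i=1}^m \bar u_i(\bar x-\bar{\bm\xi}_i)=0$, which is exactly the first identity in \eqref{eq:stationary}. Next I would treat the $\bm\xi$-block: for each $i$, $0\in -2\bar u_i(\bar{\bm\xi}_i-\bar x)+N_{D_i}(\bar{\bm\xi}_i)$, i.e. $2\bar u_i(\bar{\bm\xi}_i-\bar x)\in N_{D_i}(\bar{\bm\xi}_i)$. Then I would treat the $u$-block: for each $i$, $0\in\partial\rho^*(\bar u_i)-\|\bar x-\bar{\bm\xi}_i\|^2$, i.e. $\|\bar x-\bar{\bm\xi}_i\|^2\in\partial\rho^*(\bar u_i)$, which by \cite[Proposition~11.3]{Rock_wets98} is equivalent to $\bar u_i=\rho'(\|\bar x-\bar{\bm\xi}_i\|^2)$. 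The main work is to combine the $\bm\xi$- and $u$-blocks to deduce $\bar{\bm\xi}_i=P_{D_i}(\bar x)$. Here I would use the convexity of $D_i$: since $\bar{\bm\xi}_i\in D_i$ (forced by $\delta_{D_i}(\bar{\bm\xi}_i)<\infty$, which is implicit in $\partial\F$ being nonempty there) and $\bar u_i=\rho'(\cdot)=-\psi_+'(\cdot)<0$ by the expression \eqref{rho'} and $\psi\in\Theta$, the condition $2\bar u_i(\bar{\bm\xi}_i-\bar x)\in N_{D_i}(\bar{\bm\xi}_i)$ rescales (dividing by the negative scalar $2\bar u_i$) to $\bar x-\bar{\bm\xi}_i\in N_{D_i}(\bar{\bm\xi}_i)$; for a convex set this is precisely the characterization that $\bar{\bm\xi}_i=P_{D_i}(\bar x)$. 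This simultaneously gives $\|\bar x-\bar{\bm\xi}_i\|^2=d^2_{D_i}(\bar x)$, so the $u$-block identity becomes $\bar u_i=\rho'(d^2_{D_i}(\bar x))=-\psi_+'(d^2_{D_i}(\bar x))<0$, yielding the second and third parts of \eqref{eq:stationary}.

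With these in hand, the remaining two conclusions follow quickly. For $\F(\bar x,\bar{\bm\xi},\bar u)=\Psi(\bar x)$, I would invoke \eqref{eq:psiand F}: having shown $\bar{\bm\xi}_i=P_{D_i}(\bar x)$ and $\bar u_i=\rho'(d^2_{D_i}(\bar x))$, the triple $(\bar x,\bar{\bm\xi},\bar u)$ matches the data $(x,\widehat{\bm\xi},\widehat u)$ appearing there, so the value identity transfers verbatim. For $\nabla\Psi(\bar x)=0$, I would substitute $\bar u_i=-\psi_+'(d^2_{D_i}(\bar x))$ and $\bar{\bm\xi}_i=P_{D_i}(\bar x)$ into the already-derived first identity $\sum_i\bar u_i(\bar x-\bar{\bm\xi}_i)=0$, which gives $\sum_i\psi_+'(d^2_{D_i}(\bar x))(\bar x-P_{D_i}(\bar x))=0$; comparing with the gradient formula \eqref{nablaPsi} (valid here since each $D_i$ is convex, making $\Psi$ smooth) shows this is exactly $\tfrac12\nabla\Psi(\bar x)=0$.

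The only subtle point, and the step I would scrutinize most carefully, is the rescaling in the $\bm\xi$-block: I must confirm that multiplying a normal-cone inclusion by the positive scalar $-1/(2\bar u_i)$ preserves membership, which holds because $N_{D_i}(\bar{\bm\xi}_i)$ is a cone, and that $\bar x-\bar{\bm\xi}_i\in N_{D_i}(\bar{\bm\xi}_i)$ together with $\bar{\bm\xi}_i\in D_i$ is genuinely equivalent to $\bar{\bm\xi}_i=P_{D_i}(\bar x)$ rather than merely characterizing a stationary (possibly non-projection) point — this equivalence is exactly where convexity of $D_i$ is indispensable, via \cite[Example~6.16]{Rock_wets98} and the fact that for convex sets the projection is the unique solution of the normal-cone condition. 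Everything else is routine substitution into the formulas \eqref{partial:F}, \eqref{rho'}, \eqref{eq:psiand F}, and \eqref{nablaPsi} already established in the excerpt.
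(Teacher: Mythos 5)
Your proposal is correct and follows essentially the same route as the paper's proof: unpack $0\in\partial\F$ blockwise via \eqref{partial:F}, use \cite[Proposition~11.3]{Rock_wets98} to get $\bar u_i=\rho'(\|\bar x-\bar{\bm\xi}_i\|^2)<0$, rescale the normal-cone inclusion by the positive scalar $-1/(2\bar u_i)$ to obtain $\bar x-\bar{\bm\xi}_i\in N_{D_i}(\bar{\bm\xi}_i)$ and hence $\bar{\bm\xi}_i=P_{D_i}(\bar x)$ by convexity (the paper cites $(I+N_{D_i})^{-1}=P_{D_i}$ from \cite[Proposition~6.17]{Rock_wets98} for exactly the equivalence you flag as the subtle step), and then read off the two remaining identities from \eqref{eq:psiand F} and \eqref{nablaPsi}. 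No gaps.
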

\begin{proof}
We first prove \eqref{eq:stationary}. Since $0\in \partial \F(\bar{x}, \bar{{\bm\xi}}, \bar{u})$, we see from \eqref{partial:F} that $\bar{{\bm\xi}}_i \in D_i$, $0=\sum_{i=1}^{m} \bar{u}_i(\bar{x}- \bar{{\bm\xi}}_i)$, $\|\bar{x}- \bar{{\bm\xi}}_i\|^2\in \partial\rho^*(\bar{u}_i)$ and $- 2\bar{u}_i (\bar{x} - \bar{{\bm\xi}}_i) \in  N_{D_i}(\bar{{\bm\xi}}_i)$ for each $i$. Hence, the first relation in \eqref{eq:stationary} holds. Also, combining $\|\bar{x}- \bar{{\bm\xi}}_i\|^2\in \partial\rho^*(\bar{u}_i)$ with \cite[Proposition~11.3]{Rock_wets98}, we obtain
\begin{equation}\label{baru}
\bar{u}_i = \rho'(\|\bar{x} - \bar{{\bm\xi}}_i\|^2)< 0.
\end{equation}
Using this and $- 2\bar{u}_i (\bar{x} - \bar{{\bm\xi}}_i) \in  N_{D_i}(\bar{{\bm\xi}}_i)$, we have $\bar{x} - \bar{{\bm\xi}}_i \in  N_{D_i}(\bar{{\bm\xi}}_i)$. This together with \cite[Proposition~6.17]{Rock_wets98} and the convexity of $D_i$ implies that
\begin{align*}
\bar{{\bm\xi}}_i = (I+N_{D_i})^{-1}(\bar{x})= P_{D_i}(\bar{x}).
\end{align*}
In particular, the second relation in \eqref{eq:stationary} holds and $\|\bar{x} - \bar{{\bm\xi}}_i\|^2 = d^2(\bar{x},D_i)$. Combining this with \eqref{baru} gives the third relation in \eqref{eq:stationary}. These prove \eqref{eq:stationary}.

Next, we deduce from \eqref{eq:psiand F}, \eqref{eq:stationary} and the definitions of $\widehat{\bm\xi}$ and $\widehat u$ that $\F(\bar{x}, \bar{{\bm\xi}}, \bar{u}) =  \Psi(\bar{x})$.
Finally, from \eqref{eq:stationary} and \eqref{rho'} we see that
\[
0 = \sum_{i=1}^{m} \bar{u}_i(\bar{x}- \bar{{\bm\xi}}_i) = \sum_{i=1}^{m} \rho'(d^2(\bar{x},D_i))(\bar{x}- P_{D_i}(\bar{x}))= \sum_{i=1}^{m} -\psi_+'(d^2(\bar{x},D_i))(\bar{x}- P_{D_i}(\bar{x})),
\]
i.e., $\nabla \Psi(\bar x) = 0$. This completes the proof.
\end{proof}

We now present our analysis concerning the two different KL assumptions used in Theorems~\ref{thm5.1} and \ref{KL_nonconvex}. In our analysis below, we assume that each $D_i$ is convex, $C = \R^n$ and $\psi\in \Theta$. We also require in addition that $\psi$ is \textbf{strict concave} on $\R_{+}$. This latter assumption together with \eqref{liftingrho} shows that $\rho$ is a \textbf{strictly convex} continuously differentiable nonincreasing function on $\R$. One can check that the functions $s\mapsto  \frac{s}{s+\epsilon} + \epsilon s$, $\epsilon > 0$, and $s\mapsto \log(s+\epsilon) - \log (\epsilon)$, $\epsilon\in (0,1)$, discussed in the beginning of Section~\ref{sec4} are both strictly concave on $\R_+$.

Under the additional strict concavity assumption on $\psi$, we see from \cite[Theorem~26.3]{Roc70} that $\rho^*$ is essentially smooth. Thus, ${\rm dom}\,\partial\rho^*$ is open thanks to \cite[Theorem~26.1]{Roc70} and $\rho^*$ is indeed continuously differentiable at each $u_i\in {\rm dom}\,\partial \rho^*$ in view of \cite[Theorem~25.5]{Roc70}. On the other hand, notice from \eqref{partial:F} that $(x,{\bm\xi},u)\in {\rm dom}\,\partial \F$ implies $u_i \in {\rm dom}\,\partial \rho^*$ for each $i$. Thus, if $(x,{\bm\xi},u)\in {\rm dom}\,\partial \F$, then $\rho^*$ is continuously differentiable at $u_i$ for each $i$. In the next lemma, we establish some inequalities concerning $(\rho^*)'$ at some special points in $\rm{dom}\,\partial \F$.

\begin{lemma}\label{lem:5.5}
Suppose that each $D_i$ is convex, $C = \R^n$, $\psi\in \Theta$ and is strictly concave on $\R_{+}$, $\Psi$ is defined in \eqref{sum_subproblem_concave} and $\F$ is defined in \eqref{def:F}. Let $0\in \partial \F(\bar{x}, \bar{{\bm\xi}}, \bar{u})$. Then there exist positive numbers $\epsilon$, $L$, $\bar c$, $c_1$ and $c_2$ so that whenever $(x,{\bm\xi},u)\in {\rm dom}\,\partial \F\cap {\bf B}((\bar x,\bar{\bm\xi},\bar u),\epsilon)$, the following inequalities hold:
\begin{equation}\label{ineq:fandpsi}
 \F(x,{\bm\xi},u) \leq \Psi(x)+ \frac{L}{2}\sum_{i=1}^{m}[d^2_{D_i}(x) - (\rho^*)^{'}(u_i)]^2 - \sum_{i=1}^{m}u_i\left[\norm{x- {\bm\xi}_i}^2 - d^2_{D_i}(x)\right],
\end{equation}
\begin{equation}\label{con(c)}
 L|(\rho^*)^{'}(u_i) - d^2_{D_i}(x) |\ge | u_i + \psi'_+(d^2_{D_i}(x))|\text{ for all $i$},
\end{equation}
\begin{equation}\label{ineq:dis}
 |d^2_{D_i}(x)- (\rho^*)^{'}(u_i)|< m^{-1}\text{ and } 0\leq \|x-{\bm\xi}_i\|^2 - d^2_{D_i}(x) < m^{-1}\text{ for all $i$},
\end{equation}
\begin{equation}\label{ineq:u_i}
 -\bar c < u_i < 0 \text{ and }u_i^2\ge c_1 \text{ for all $i$},
\end{equation}
\begin{equation}\label{con(a)}
 \inf_{\bm\mu_i\in N_{D_i}({\bm\xi}_i)}\,\| -2u_i({\bm\xi}_i-x) + \bm\mu_i\|^2 \geq c_1\left(\|{\bm\xi}_i - x\|^2 - d^2_{D_i}(x) \right)\geq 0\text{ for all $i$},
\end{equation}
\begin{equation}\label{con(b)}
 \inf_{\bm\mu_i\in N_{D_i}({\bm\xi}_i)}\,\| -2u_i({\bm\xi}_i-x) + \bm\mu_i\|^2 \geq c_2 \|P_{D_i}(x) -{\bm\xi}_i\|^2\text{ for all $i$}.
\end{equation}
\end{lemma}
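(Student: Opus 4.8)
The plan is to exploit the explicit forms of $\rho'$ in \eqref{rho'} and of $\partial\F$ in \eqref{partial:F}, together with the base-point information from Lemma~\ref{lem:5.4}. Fix $(\bar x,\bar{\bm\xi},\bar u)$ with $0\in\partial\F(\bar x,\bar{\bm\xi},\bar u)$. By Lemma~\ref{lem:5.4}, $\bar{\bm\xi}_i=P_{D_i}(\bar x)$ and $\bar u_i=\rho'(d^2_{D_i}(\bar x))<0$, so $(\rho^*)'(\bar u_i)=d^2_{D_i}(\bar x)=\|\bar x-\bar{\bm\xi}_i\|^2$ (via \cite[Proposition~11.3]{Rock_wets98}); in particular both $d^2_{D_i}(x)-(\rho^*)'(u_i)$ and $\|x-\bm\xi_i\|^2-d^2_{D_i}(x)$ vanish at the base point. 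Throughout, let $L$ be the global Lipschitz modulus of $\rho'$ and recall that on ${\rm dom}\,\partial\F$ each $u_i\in{\rm dom}\,\partial\rho^*$ (open, with $\rho^*$ of class $C^1$ there) and each $\bm\xi_i\in D_i$.

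Two of the six inequalities are pure convex duality and need no smallness of $\epsilon$. For \eqref{con(c)}, write $t:=(\rho^*)'(u_i)$ so $\rho'(t)=u_i$, and $s:=d^2_{D_i}(x)\ge0$ so $\rho'(s)=-\psi'_+(s)$ by \eqref{rho'}; then $|u_i+\psi'_+(s)|=|\rho'(t)-\rho'(s)|\le L|t-s|$, which is exactly \eqref{con(c)}. For \eqref{ineq:fandpsi}, I would use the Fenchel--Young gap: with the same $t,s$ one has $\rho^*(u_i)+\rho(s)-u_is=\rho(s)-\rho(t)-\rho'(t)(s-t)\le\frac{L}{2}(s-t)^2$, the inequality being the descent lemma for the $C^1$ convex function $\rho$ with $L$-Lipschitz derivative. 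Rearranging this per index $i$, using $\rho(s)=-\psi(s)$ for $s\ge0$, and summing after discarding the terms $\delta_{D_i}(\bm\xi_i)=0$ yields \eqref{ineq:fandpsi}.

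The two continuity inequalities \eqref{ineq:dis} and \eqref{ineq:u_i} follow by shrinking $\epsilon$. Since $d^2_{D_i}(\cdot)$ is continuous and $(\rho^*)'$ is continuous on the open set ${\rm dom}\,\partial\rho^*\ni\bar u_i$, the map $d^2_{D_i}(x)-(\rho^*)'(u_i)$ is continuous and vanishes at the base point, so it can be made $<m^{-1}$ in modulus near $(\bar x,\bar{\bm\xi},\bar u)$; the same holds for $\|x-\bm\xi_i\|^2-d^2_{D_i}(x)$, whose nonnegativity is automatic since $\bm\xi_i\in D_i$ forces $\|x-\bm\xi_i\|\ge d_{D_i}(x)$. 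This gives \eqref{ineq:dis}. For \eqref{ineq:u_i}, as $\bar u_i<0$ for every $i$, choosing $\epsilon<\min_i|\bar u_i|$ keeps each $u_i$ strictly negative and bounded away from $0$, yielding $-\bar c<u_i<0$ and $u_i^2\ge c_1$ for suitable $\bar c,c_1>0$.

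The crux is the error-bound pair \eqref{con(a)} and \eqref{con(b)}, where the key tool is the projection variational inequality for the convex $D_i$. Fix $i$, write $p_i:=P_{D_i}(x)$, and for arbitrary $\bm\mu_i\in N_{D_i}(\bm\xi_i)$ set $w:=-2u_i(\bm\xi_i-x)+\bm\mu_i=2u_i(x-\bm\xi_i)+\bm\mu_i$. Using $\langle\bm\mu_i,p_i-\bm\xi_i\rangle\le0$ (normality, as $p_i\in D_i$) and $\langle x-p_i,p_i-\bm\xi_i\rangle\ge0$ (projection inequality), I would expand $\langle w,p_i-\bm\xi_i\rangle=2u_i\langle x-\bm\xi_i,p_i-\bm\xi_i\rangle+\langle\bm\mu_i,p_i-\bm\xi_i\rangle$ and, writing $A:=\langle x-p_i,p_i-\bm\xi_i\rangle\ge0$ and $B:=\|p_i-\bm\xi_i\|^2$, bound it above by $2u_i(A+B)\le0$. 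Cauchy--Schwarz then gives $\|w\|\sqrt B\ge-\langle w,p_i-\bm\xi_i\rangle\ge2|u_i|(A+B)$, whence $\|w\|^2\ge 4u_i^2(A+B)^2/B\ge4u_i^2(2A+B)$, using $(A+B)^2/B\ge2A+B$. Since the gap equals $\|x-\bm\xi_i\|^2-d^2_{D_i}(x)=2A+B$ and $u_i^2\ge c_1$ by \eqref{ineq:u_i}, and since this holds for every $\bm\mu_i$, taking the infimum gives \eqref{con(a)} (the case $B=0$ forces $A=0$ and makes \eqref{con(a)} trivial). Finally, \eqref{con(b)} is immediate from \eqref{con(a)}: the projection inequality also gives $\|x-\bm\xi_i\|^2-d^2_{D_i}(x)=2A+B\ge B=\|P_{D_i}(x)-\bm\xi_i\|^2$, so the same residual bound controls $\|P_{D_i}(x)-\bm\xi_i\|^2$ up to a constant $c_2$. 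I expect the correct handling of the infimum over $N_{D_i}(\bm\xi_i)$ in \eqref{con(a)}, and the bookkeeping of constants across the six estimates, to be the only genuinely delicate points.
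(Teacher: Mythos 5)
Your proposal is correct, and for four of the six inequalities it coincides with the paper's argument: \eqref{ineq:fandpsi} and \eqref{con(c)} via the Fenchel--Young identity $\rho^*(u_i)=t u_i-\rho(t)$ with $t=(\rho^*)'(u_i)$, the descent lemma for the Lipschitz $\rho'$, and $\rho=-\psi$ on $\R_+$; and \eqref{ineq:dis}, \eqref{ineq:u_i} by continuity around the base point using the identities $(\rho^*)'(\bar u_i)=d^2_{D_i}(\bar x)=\|\bar x-\bar{\bm\xi}_i\|^2$ and $\bar u_i<0$ supplied by Lemma~\ref{lem:5.4}. Where you genuinely diverge is in \eqref{con(a)} and \eqref{con(b)}. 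For \eqref{con(a)} the paper factors out $4u_i^2$, recognizes the remaining infimum as $\dist^2(0,\partial f({\bm\xi}_i))$ for the strongly convex $f=\frac12\|\cdot-x\|^2+\delta_{D_i}$, and extracts the bound from a strong-convexity inequality; your direct expansion with $A=\langle x-p_i,p_i-{\bm\xi}_i\rangle\ge 0$, $B=\|p_i-{\bm\xi}_i\|^2$, the normal-cone inequality $\langle{\bm\mu}_i,p_i-{\bm\xi}_i\rangle\le 0$ and Cauchy--Schwarz reaches the same estimate (the identity $\|x-{\bm\xi}_i\|^2-d^2_{D_i}(x)=2A+B$ and $(A+B)^2/B\ge 2A+B$ both check out, and the $B=0$ case is indeed trivial). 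The more substantial difference is \eqref{con(b)}: the paper proves it by invoking the Aubin property of $P_{D_i}=(I+N_{D_i})^{-1}$ and metric regularity of $I+N_{D_i}$ at $(\bar{\bm\xi}_i,\bar x)$ via \cite[Theorem~9.43]{Rock_wets98}, which is the step that forces the final shrinking of $\epsilon$ to $\min_i\epsilon_i$; you instead observe that $\|x-{\bm\xi}_i\|^2-d^2_{D_i}(x)=2A+B\ge B=\|P_{D_i}(x)-{\bm\xi}_i\|^2$, so \eqref{con(b)} follows from \eqref{con(a)} with $c_2=c_1$ and no localization at all. Your route is more elementary and self-contained (it uses only convexity of $D_i$, not variational-analytic regularity machinery), at the cost of nothing; the paper's route would generalize more readily to settings where the pointwise inequality between the two residuals fails.
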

\begin{proof}
Since $(\bar x,\bar{\bm\xi},\bar u)\in {\rm dom}\,\partial \F$, from the discussion preceding this lemma, we see that there exists $\epsilon_0>0$ so that $(\rho^*)^{'}$ is continuous at $u_i$ for all $i$ whenever $(x,{\bm\xi},u)\in {\rm dom}\,\partial \F\cap {\bf B}((\bar x,\bar{\bm\xi},\bar u),\epsilon_0)$.

Let $(x,{\bm\xi},u)\in {\rm dom}\,\partial \F\cap {\bf B}((\bar x,\bar{\bm\xi},\bar u),\epsilon_0)$ and define $\tilde{y}_i = (\rho^*)^{'}(u_i)$ for each $i$. Then, from \cite[Proposition~11.3]{Rock_wets98}, we see that
\begin{align}\label{uiandyi}
u_i = \rho'(\tilde{y}_i)\text{~ and ~} \rho^* (u_i) = -\rho(\tilde{y}_i) + \tilde{y}_i u_i \text{~ for each ~}i.
\end{align}
Moreover, since $\rho'$ is Lipschitz continuous on $\R$ in view of $\psi\in \Theta$ and \eqref{rho'}, we have
\begin{equation}\label{hahaha}
  \rho(d^2_{D_i}(x)) \leq \rho(\tilde{y}_i) + u_i (d^2_{D_i}(x) - \tilde{y}_i) + \frac{L}{2}(d^2_{D_i}(x) - \tilde{y}_i)^2,
\end{equation}
where $L$ is the Lipschitz continuity modulus.
Then we have
\begin{align*}
\begin{aligned}
&\F(x,{\bm\xi},u) = \sum_{i=1}^{m}\left[ - u_i\norm{x- {\bm\xi}_i}^2 - \rho(\tilde{y}_i) +\tilde{y}_i u_i \right] \\
&= \sum_{i=1}^{m}\left[ - \rho(\tilde{y}_i) - u_i(d^2_{D_i}(x) - \tilde{y}_i) - u_i(\norm{x- {\bm\xi}_i}^2 - d^2_{D_i}(x)) \right]\\
&\leq \sum_{i=1}^{m}\left[ -\rho(d^2_{D_i}(x)) + \frac{L}{2}(d^2_{D_i}(x) - \tilde{y}_i)^2 \right] + \sum_{i=1}^{m} (- u_i)\left[\norm{x- {\bm\xi}_i}^2 - d^2_{D_i}(x)\right]\\
&= \Psi(x)+ \frac{L}{2}\sum_{i=1}^{m}[d^2_{D_i}(x) - (\rho^*)^{'}(u_i)]^2 + \sum_{i=1}^{m}(-u_i)\left[\norm{x- {\bm\xi}_i}^2 - d^2_{D_i}(x)\right],
\end{aligned}
\end{align*}
where the first equality follows from the second relation in \eqref{uiandyi}, the inequality is due to \eqref{hahaha} and the last equality follows from the definitions of $\Psi$ and $\tilde y_i$.
Furthermore, we deduce that for each $i$,
\begin{align*}
|u_i + \psi'_+(d^2_{D_i}(x))| = |\rho'(\tilde{y}_i) - \rho'(d^2_{D_i}(x))| \leq L |\tilde{y}_i -  d^2_{D_i}(x)| = L | (\rho^*)^{'}(u_i) -  d^2_{D_i}(x)|,
\end{align*}
where the first equality follows from the first relation in \eqref{uiandyi} and the expression of $\rho'$ in \eqref{rho'}, the inequality follows from the Lipschitz continuity of $\rho'$ and the last equality follows from the definition of $\tilde y_i$.
Thus, \eqref{ineq:fandpsi} and \eqref{con(c)} hold whenever $(x,{\bm\xi},u)\in {\rm dom}\,\partial \F\cap {\bf B}((\bar x,\bar{\bm\xi},\bar u),\epsilon_0)$.

Next, note that \eqref{eq:stationary} holds because $0\in \partial \F(\bar{x}, \bar{{\bm\xi}}, \bar{u})$. Then for each $i$, it holds that
\begin{equation}\label{limit0}
\lim_{u_i\to \bar u_i}(\rho^*)'(u_i) = (\rho^*)'(\bar u_i) = (\rho^*)^{'}\left(\rho'(d^2_{D_i}(\bar{x}))\right)= d^2_{D_i}(\bar{x}) = \lim_{x\to \bar x}d^2_{D_i}(x),
\end{equation}
where the second equality follows from the last relation in \eqref{eq:stationary} and the third equality follows from \cite[Proposition~11.3]{Rock_wets98}. Moreover, we have for each $i$,
\begin{equation}\label{limit1}
\lim_{(x,{\bm\xi}_i)\to (\bar x,\bar{\bm\xi}_i)}\|x-{\bm\xi}_i\|^2 = \|\bar x - \bar{\bm\xi}_i\|^2 = d^2_{D_i}(\bar x) = \lim_{x\to \bar x}d^2_{D_i}(x),
\end{equation}
where the second equality above follows from the second relation in \eqref{eq:stationary}. In addition, for any $(x,{\bm\xi},u)\in {\rm dom}\,\partial \F$, we have ${\bm\xi}_i\in D_i$ for all $i$ and hence $\|x-{\bm\xi}_i\|^2\ge d^2_{D_i}(x)$. In view of this, \eqref{limit0} and \eqref{limit1}, we conclude that one can further shrink $\epsilon_0$ so that \eqref{ineq:dis} also holds whenever $(x,{\bm\xi},u)\in {\rm dom}\,\partial \F\cap {\bf B}((\bar x,\bar{\bm\xi},\bar u),\epsilon_0)$. Now, recall that $\bar u < 0$ from the third relation in \eqref{eq:stationary}. Thus, one can shrink $\epsilon_0$ further so that \eqref{ineq:u_i} holds true for some positive numbers $c_1$ and $\bar c$ whenever $(x,{\bm\xi},u)\in {\rm dom}\,\partial \F\cap {\bf B}((\bar x,\bar{\bm\xi},\bar u),\epsilon_0)$.

It now remains to prove \eqref{con(a)} and \eqref{con(b)}. We first prove \eqref{con(a)}. Let $(x,{\bm\xi},u)\in {\rm dom}\,\partial \F\cap {\bf B}((\bar x,\bar{\bm\xi},\bar u),\epsilon_0)$. Fix any $i$. Then we have $u_i\le -\sqrt{c_1}$ from \eqref{ineq:u_i} and hence
\begin{align}\label{con1}
\begin{split}
&\inf _{\bm\mu_i\in N_{D_i}({\bm\xi}_i)}\,\| -2u_i({\bm\xi}_i-x) + \bm\mu_i\|^2 = 4u_i^2\cdot\inf_{-\frac{1}{2u_i}\bm\mu_i\in N_{D_i}({\bm\xi}_i)}\,\left\| {\bm\xi}_i-x -\frac{1}{2u_i}\bm\mu_i\right\|^2 \\
&\ge u_i^2\cdot\inf_{\bm\mu_i\in N_{D_i}({\bm\xi}_i)}\,\| {\bm\xi}_i-x +\bm\mu_i\|^2 \ge c_1 \dist^2 \left(0, \partial f ({\bm\xi}_i)\right),
\end{split}
\end{align}
where $f(\cdot):=\frac{1}{2}\|\cdot - x\|^2 + \delta_{D_i}(\cdot)$. Since $D_i$ is convex, we see that $f$ is a strongly convex function with modulus $1$. This together with $P_{D_i}(x) \in D_i$ and ${\bm\xi}_i \in D_i$ gives
\[
\frac{1}{2}\|P_{D_i}(x) - x\|^2 - \frac{1}{2}\|{\bm\xi}_i - x\|^2 \geq \inner{\eta}{P_{D_i}(x) - {\bm\xi}_i} + \frac{1}{2}\|P_{D_i}(x) - {\bm\xi}_i\|^2\text{~ for all ~}\eta \in \partial f({\bm\xi}_i).
\]
Multiplying the above inequality by 2 and rearranging terms, we obtain further that
\begin{align*}
2\inner{\eta}{{\bm\xi}_i - P_{D_i}(x)}\geq \|{\bm\xi}_i - x\|^2 -\|P_{D_i}(x) - x\|^2 + \|P_{D_i}(x) - {\bm\xi}_i\|^2.
\end{align*}
This together with the relation $\|\eta\|^2 + \|{\bm\xi}_i - P_{D_i}(x) \|^2 \geq 2\|\eta\| \|{\bm\xi}_i - P_{D_i}(x)\|$ gives
\begin{align*}
\|\eta\|^2 \geq \|{\bm\xi}_i - x\|^2 - \|P_{D_i}(x) - x\|^2\geq 0 \text{~ for all ~}\eta \in \partial f({\bm\xi}_i),
\end{align*}
where the last inequality holds because ${\bm\xi}_i \in D_i$.
This together with \eqref{con1} shows that \eqref{con(a)} holds whenever $(x,{\bm\xi},u)\in {\rm dom}\,\partial \F\cap {\bf B}((\bar x,\bar{\bm\xi},\bar u),\epsilon_0)$.

Finally, we show that there exists $\epsilon \in (0,\epsilon_0]$ so that \eqref{con(b)} holds whenever $(x,{\bm\xi},u)\in {\rm dom}\,\partial \F\cap {\bf B}((\bar x,\bar{\bm\xi},\bar u),\epsilon)$. To this end, we first recall from the second relation in \eqref{eq:stationary} that $\bar {\bm\xi}_i = P_{D_i}(\bar x)$ for each $i$. Moreover, recall also from \cite[Proposition~6.17]{Rock_wets98} that $(I + N_{D_i})^{-1} = P_{D_i}$, since $D_i$ is convex. Using these together with the nonexpansiveness (and hence, Lipschitz continuity) of $P_{D_i}$, we see that $(I + N_{D_i})^{-1}$ has the Aubin property at $(\bar x,\bar{\bm\xi}_i)$. Hence, $I + N_{D_i}$ is metrically regular at $(\bar{\bm\xi}_i, \bar x)$ thanks to \cite[Theorem 9.43]{Rock_wets98}. Thus, there exist $\epsilon_i >0$ and $\kappa_i>0$ such that
\begin{equation}\label{ineq:aubin}
\kappa_i\|{\bm\xi}_i - (I + N_{D_i})^{-1}(x)\| \leq \inf_{\bm\omega_i\in (I + N_{D_i})({\bm\xi}_i)}\|x - \bm\omega_i\|\text{~ for all ~}(x,{\bm\xi}_i)\in {\bf B}((\bar x,\bar{\bm\xi}_i),\epsilon_i).
\end{equation}
On the other hand, let $(x,{\bm\xi},u)\in {\rm dom}\,\partial \F\cap {\bf B}((\bar x,\bar{\bm\xi},\bar u),\epsilon_0)$. Then, from \eqref{con1}, we see that $\inf_{\bm\mu_i\in N_{D_i}({\bm\xi}_i)}\,\| -2u_i({\bm\xi}_i-x) + \bm\mu_i\|^2$ is bounded below by
\begin{align*}
 c_1 \inf_{\bm\mu_i\in N_{D_i}({\bm\xi}_i)}\,\| x - {\bm\xi}_i - \bm\mu_i\|^2 = c_1 \inf_{\bm\omega_i\in (I + N_{D_i})({\bm\xi}_i)}\,\| x - \bm\omega_i\|^2.
\end{align*}
Combining this with \eqref{ineq:aubin} and recalling that $(I + N_{D_i})^{-1} = P_{D_i}$, we conclude that \eqref{con(b)} holds for some $c_2 > 0$ whenever $(x,{\bm\xi},u)\in {\rm dom}\,\partial \F\cap {\bf B}((\bar x,\bar{\bm\xi},\bar u),\epsilon)$, where $\epsilon := \min\{\epsilon_i:0\le i\le m\}$. This completes the proof.
\end{proof}

We are now ready to prove the main theorem of this section.

\begin{theorem}
Suppose that each $D_i$ is convex, $C = \R^n$, $\psi\in \Theta$ and is strictly concave on $\R_{+}$. If $\Psi$ in \eqref{sum_subproblem_concave} satisfies the KL property with exponent $\theta \in [\frac12,1)$, then $\F$ in \eqref{def:F} satisfies the KL property with exponent $\theta$.
\end{theorem}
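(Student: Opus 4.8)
The plan is to verify, following the KL-calculus strategy of \cite{LiPong17}, that $\F$ satisfies the KL property with exponent $\theta$ at every point of $\dom\partial\F$. At a point $(\bar x,\bar{\bm\xi},\bar u)\in\dom\partial\F$ with $0\notin\partial\F(\bar x,\bar{\bm\xi},\bar u)$, the outer semicontinuity of $\partial\F$ keeps $\dist(0,\partial\F(\cdot))$ bounded away from zero on a neighborhood, so the KL inequality holds trivially for any exponent once the associated $\nu$ is taken small. Hence the whole burden is to treat a stationary point $(\bar x,\bar{\bm\xi},\bar u)$, i.e.\ one with $0\in\partial\F(\bar x,\bar{\bm\xi},\bar u)$. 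For such a point, Lemma~\ref{lem:5.4} supplies $\nabla\Psi(\bar x)=0$, $\F(\bar x,\bar{\bm\xi},\bar u)=\Psi(\bar x)$ and the relations \eqref{eq:stationary}; in particular $\Psi$ enjoys the KL property with exponent $\theta$ at $\bar x$ by hypothesis. Throughout I restrict to $(x,{\bm\xi},u)$ in the ball $\mathbf{B}((\bar x,\bar{\bm\xi},\bar u),\epsilon)$ from Lemma~\ref{lem:5.5} with $\Psi(\bar x)<\F(x,{\bm\xi},u)<\Psi(\bar x)+\nu$, and I may assume $\dist(0,\partial\F(x,{\bm\xi},u))<1$: if instead it is $\ge1$, then $\F(x,{\bm\xi},u)-\Psi(\bar x)<\nu\le\dist(0,\partial\F)^{1/\theta}$ holds after enlarging the final constant.

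First I would record, from \eqref{partial:F} and the essential smoothness of $\rho^*$ on $\dom\partial\F$, that for $(x,{\bm\xi},u)\in\dom\partial\F$
\[
\dist^2(0,\partial\F(x,{\bm\xi},u)) = a^2 + \sum_{i=1}^m b_i^2 + \sum_{i=1}^m e_i^2,
\]
where $a:=\|2\sum_i u_i(x-{\bm\xi}_i)\|$, $b_i:=\inf_{\bm\mu_i\in N_{D_i}({\bm\xi}_i)}\|-2u_i({\bm\xi}_i-x)+\bm\mu_i\|$ and $e_i:=|(\rho^*)'(u_i)-\|x-{\bm\xi}_i\|^2|$. Writing $v_i:=-u_i$ and using $\nabla\Psi(x)=2\sum_i\psi'_+(d^2_{D_i}(x))(x-P_{D_i}(x))$ from \eqref{nablaPsi}, I split $\nabla\Psi(x)-2\sum_i v_i(x-{\bm\xi}_i)$ into a coefficient error $|u_i+\psi'_+(d^2_{D_i}(x))|\,d_{D_i}(x)$ and a projection error $v_i\|{\bm\xi}_i-P_{D_i}(x)\|$, so that $\|\nabla\Psi(x)\|\le a+2\sum_i|u_i+\psi'_+(d^2_{D_i}(x))|\,d_{D_i}(x)+2\sum_i v_i\|{\bm\xi}_i-P_{D_i}(x)\|$. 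By \eqref{con(c)} together with $|(\rho^*)'(u_i)-d^2_{D_i}(x)|\le e_i+\tfrac1{c_1}b_i^2$ (the latter via \eqref{con(a)} and the triangle inequality), the coefficient error is $\le L(e_i+\tfrac1{c_1}b_i^2)$, and by \eqref{con(b)} the projection error is $\le\tfrac1{\sqrt{c_2}}b_i$, with $v_i$ bounded by \eqref{ineq:u_i} and $d_{D_i}(x)$ bounded by continuity. Since $a,b_i,e_i\le\dist(0,\partial\F)<1$ forces $b_i^2\le\dist(0,\partial\F)$, I obtain $\|\nabla\Psi(x)\|\le C\,\dist(0,\partial\F(x,{\bm\xi},u))$.

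Next I would upper bound $\F(x,{\bm\xi},u)-\Psi(\bar x)$ using \eqref{ineq:fandpsi}, splitting it into three pieces: (I)~$\Psi(x)-\Psi(\bar x)$, (II)~$\tfrac{L}{2}\sum_i[d^2_{D_i}(x)-(\rho^*)'(u_i)]^2$, and (III)~$\sum_i v_i[\|x-{\bm\xi}_i\|^2-d^2_{D_i}(x)]$. For (I), if $\Psi(x)>\Psi(\bar x)$ the KL inequality for $\Psi$ (using $\partial\Psi=\{\nabla\Psi\}$) gives $\Psi(x)-\Psi(\bar x)\le C_1\|\nabla\Psi(x)\|^{1/\theta}\le C_1'\dist(0,\partial\F)^{1/\theta}$ by the preceding step, while (I)$\le0$ otherwise; the $\nu$ is shrunk so that $\Psi(x)$ stays in the KL range of $\Psi$ by continuity. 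For (II), the bound $|d^2_{D_i}(x)-(\rho^*)'(u_i)|\le e_i+\tfrac1{c_1}b_i^2$ makes this piece $O(e_i^2+b_i^4)=O(\dist^2)$, and for (III), \eqref{con(a)} and \eqref{ineq:u_i} make it $O(b_i^2)=O(\dist^2)$. The decisive point — and the main obstacle — is the exponent bookkeeping in (II) and (III): these pieces are genuinely \emph{quadratic} in $\dist(0,\partial\F)$, and they can be absorbed into $\dist(0,\partial\F)^{1/\theta}$ only because $\theta\ge\tfrac12$ forces $1/\theta\le2$, whence $\dist^2\le\dist^{1/\theta}$ on the ball where $\dist<1$. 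This is precisely where the hypothesis $\theta\in[\tfrac12,1)$ enters and cannot be removed. Combining the three pieces yields $\F(x,{\bm\xi},u)-\Psi(\bar x)\le C\,\dist(0,\partial\F(x,{\bm\xi},u))^{1/\theta}$, equivalently $\dist(0,\partial\F(x,{\bm\xi},u))\ge C^{-\theta}(\F(x,{\bm\xi},u)-\F(\bar x,\bar{\bm\xi},\bar u))^{\theta}$ on the neighborhood where $\F>\Psi(\bar x)$, which is exactly the KL property of $\F$ with exponent $\theta$ at $(\bar x,\bar{\bm\xi},\bar u)$.
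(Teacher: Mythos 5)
Your proof is correct, and its skeleton coincides with the paper's: reduce to points where $0\in\partial\F$ (the paper invokes \cite[Lemma~2.1]{LiPong17}, you argue it directly via outer semicontinuity), then combine Lemma~\ref{lem:5.4}, the six inequalities of Lemma~\ref{lem:5.5}, and the block structure \eqref{partial:F} of $\partial\F$. Where you genuinely diverge is in the final assembly. The paper works ``backwards'': it lower-bounds $\dist^{1/\theta}(0,\partial\F)$ by a sum of block contributions via \cite[Lemma~2.2]{LiPong17}, then repeatedly applies the splitting inequality $\|a-b\|^{1/\theta}\ge\bar\tau\|a\|^{1/\theta}-\eta\|b\|^{1/\theta}$ from \cite[Lemma~3.1]{LiPong17}, which forces a delicate cascade of constants $\bar\kappa_0,\dots,\bar\kappa_3$ chosen so that every negative term produced in one block is dominated by a positive term harvested from another. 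You work ``forwards'': after normalizing $\dist(0,\partial\F)<1$ so that each block norm $a,b_i,e_i$ is at most $\dist(0,\partial\F)$, you upper-bound $\F-\F(\bar x,\bar{\bm\xi},\bar u)$ through \eqref{ineq:fandpsi} piece by piece with plain triangle inequalities, absorbing the quadratic pieces into $\dist^{1/\theta}$ because $1/\theta\le 2$. This eliminates the constant-cancellation bookkeeping entirely and makes the role of the hypothesis $\theta\ge\tfrac12$ more transparent; in the paper that hypothesis enters more obliquely, in the step $\bigl[\sum_i c_1 t_i\bigr]^{1/(2\theta)}\ge c_1^{1/(2\theta)}\sum_i t_i$ of \eqref{ineq:cone1}. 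The trade-off is negligible: both arguments use exactly the same ingredients, and your estimates (e.g.\ $|(\rho^*)'(u_i)-d^2_{D_i}(x)|\le e_i+b_i^2/c_1$ via \eqref{con(a)}, the projection error $\le b_i/\sqrt{c_2}$ via \eqref{con(b)}, the coefficient error via \eqref{con(c)}) all check out, so yours reads as a streamlined version of the same proof rather than a new one.
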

\begin{proof}
In view of \cite[Lemma~2.1]{LiPong17}, it suffices to prove that $\F$ satisfies the KL property with exponent $\theta$ at all points $(\bar x,\bar{{\bm\xi}}, \bar{u})$ verifying $0 \in \partial \F(\bar x,\bar{{\bm\xi}}, \bar{u})$. To this end, fix any $(\bar{x},\bar{{\bm\xi}}, \bar{u})$ that satisfies $0 \in \partial \F(\bar x,\bar{{\bm\xi}}, \bar{u})$. Notice that $\Psi$ is continuously differentiable because each $D_i$ is convex. Since $\Psi$ is also a KL function with exponent $\theta$, we see that there exist positive numbers $\eta$ and $\epsilon$ such that
\begin{align}\label{KLpsi}
\eta\|\nabla \Psi(x)\|^{\frac{1}{\theta}}\geq \Psi(x)- \Psi(\bar{x})\text{~ whenever ~} \|x - \bar{x}\| \leq \epsilon;
\end{align}
here, the condition on the bound on function values is waived by the continuity of $\Psi$ and by choosing a smaller $\epsilon$ if necessary.
In view of Lemma~\ref{lem:5.5}, we can shrink this $\epsilon$ further so that \eqref{ineq:fandpsi}, \eqref{con(c)}, \eqref{ineq:dis}, \eqref{ineq:u_i}, \eqref{con(a)} and \eqref{con(b)} hold whenever $(x,{\bm\xi},u)\in {\rm dom}\,\partial \F\cap {\bf B}((\bar x,\bar{\bm\xi},\bar u),\epsilon)$.

Now, fix any $(x,{\bm\xi},u)\in {\rm dom}\,\partial \F\cap {\bf B}((\bar x,\bar{\bm\xi},\bar u),\epsilon)$. By \cite[Lemma~2.2]{LiPong17} and a suitable scaling, there exists $c_3 > 0$ so that $\dist^{\frac{1}{\theta}}(0, \partial \F(x,{\bm\xi}, u))$ is bounded below by
\begin{align}\label{ineq:dist1}
c_3 \left[\dist^{\frac{1}{\theta}}(0,\partial_x \F(x,{\bm\xi}, u)) + 3\dist^{\frac{1}{\theta}}(0,\partial_{{\bm\xi}} \F(x,{\bm\xi}, u)) + \dist^{\frac{1}{\theta}}(0,\partial_u \F(x,{\bm\xi}, u))\right].
\end{align}
We now derive lower bounds for the three terms on the right hand side of \eqref{ineq:dist1}. We first derive bounds for the second term.
Using \cite[Lemma~2.2]{LiPong17} and \eqref{partial:F}, we see that there exists $c_0 >0$ so that 
\begin{align}\label{ineq:con}
\begin{split}
&2\dist^{\frac{1}{\theta}}(0,\partial_{{\bm\xi}} \F(x,{\bm\xi}, u)) = 2\left(\sum_{i=1}^m \inf_{\bm\mu_i \in N_{D_i}({\bm\xi}_i)}\|- 2u_i ({\bm\xi}_i - x) + \bm\mu_i \|^2\right)^{\frac{1}{2\theta}}\\
&\geq 2c_0 \sum_{i=1}^m \inf_{\bm\mu_i \in N_{D_i}({\bm\xi}_i)}\|- 2u_i ({\bm\xi}_i - x) + \bm\mu_i \|^{\frac{1}{\theta}}\\
&\geq c_0 \sum_{i=1}^m c_1^{\frac{1}{2\theta}}\left(\|{\bm\xi}_i - x\|^2 - d^2_{D_i}(x) \right)^{\frac{1}{2\theta}} + c_0 \sum_{i=1}^m c_2^{\frac{1}{2\theta}}\|P_{D_i}(x) -{\bm\xi}_i\|^{\frac{1}{\theta}}\\
&\geq \bar \kappa_0\left(\sum_{i=1}^m \left(\|{\bm\xi}_i - x\|^2 - d^2_{D_i}(x) \right)^{\frac{1}{\theta}} + \sum_{i=1}^m\|P_{D_i}(x) -{\bm\xi}_i\|^{\frac{1}{\theta}}\right),
\end{split}
\end{align}
where $\bar \kappa_0 := \min\{c_0 c_1^{\frac{1}{2\theta}},c_0 c_2^{\frac{1}{2\theta}}\}$, the second inequality follows from \eqref{con(a)} and \eqref{con(b)}, and the last inequality holds because $0\leq \|{\bm\xi}_i - x\|^2 - d^2_{D_i}(x)<1$ (see \eqref{ineq:dis}).
On the other hand, using \eqref{con(a)}, we also have 
\begin{align}\label{ineq:cone1}
\begin{split}
&\dist^{\frac{1}{\theta}}(0,\partial_{{\bm\xi}} \F(x,{\bm\xi}, u)) = \left(\sum_{i=1}^m \inf_{\mu_i \in N_{D_i}({\bm\xi}_i)}\|- 2u_i ({\bm\xi}_i - x) + \mu_i \|^2\right)^{\frac{1}{2\theta}} \\
&\geq \left[\sum_{i=1}^m c_1\left(\|{\bm\xi}_i - x\|^2 - d^2_{D_i}(x) \right)\right]^{\frac{1}{2\theta}}\geq c_1^{\frac{1}{2\theta}}\sum_{i=1}^m \left(\|{\bm\xi}_i - x\|^2 - d^2_{D_i}(x) \right),
\end{split}
\end{align}
where the last inequality follows from $0\leq \sum_{i=1}^m \left(\|{\bm\xi}_i - x\|^2 - d^2_{D_i}(x) \right)<1$ (see \eqref{ineq:dis}) and $\theta\in [\frac12,1)$.

We next derive a lower bound for the third term on the right hand side in \eqref{ineq:dist1}. In view of \cite[Lemma~2.2]{LiPong17}, there exists $\bar\kappa_1 \in (0,\bar\kappa_0)$, with $\bar \kappa_0$ given in \eqref{ineq:con}, so that
\begin{align}\label{ineq:rhoumain}
\begin{split}
&\dist^{\frac{1}{\theta}}(0,\partial_u \F(x,{\bm\xi}, u)) \ge \bar\kappa_1\sum_{i=1}^m \left|(\rho^*)^{'}(u_i) - \|x- {\bm\xi}_i\|^2 \right|^{\frac{1}{\theta}}\\
&\geq \bar\tau_1\sum_{i=1}^m \left|(\rho^*)^{'}(u_i) - d^2_{D_i}(x)\right|^{\frac{1}{\theta}} - \bar \kappa_0\eta_1 \sum_{i=1}^m\left( \|x- {\bm\xi}_i\|^2 - d^2_{D_i}(x) \right)^{\frac{1}{\theta}},
\end{split}
\end{align}
where the last inequality follows from \cite[Lemma~3.1]{LiPong17} for some $\eta_1 \in (0,1)$ and $\bar \tau_1 > 0$.

Finally, we derive a lower bound for the first term on the right hand side in \eqref{ineq:dist1}. To this end, observe that
\begin{equation}\label{underlinec}
\widehat{c} := \sup\left\{(|a_1| + \cdots + |a_m|)^{\frac{1}{\theta}}:\; |a_1|^\frac1\theta + \cdots + |a_m|^\frac1\theta=1\right\} \in (0,\infty).
\end{equation}
Choose $\bar\kappa_2 = \min\{2^\frac1\theta,{\bar c}^{-\frac1\theta}\widehat{c}^{-1}\bar\kappa_0\}$ (with $\bar c$ and $\bar \kappa_0$ given in \eqref{ineq:u_i} and \eqref{ineq:con}, respectively), we then have
\begin{align}\label{ineq:uandxi}
\begin{split}
& \dist^{\frac{1}{\theta}}(0,\partial_x \F(x,{\bm\xi}, u)) = \left\|-2\sum_{i=1}^{m} u_i(x- {\bm\xi}_i)\right\|^{\frac{1}{\theta}}\ge \bar\kappa_2\left\|\sum_{i=1}^{m} u_i(x- {\bm\xi}_i)\right\|^{\frac{1}{\theta}}\\
&\geq  \bar\tau_2 \left\|\sum_{i=1}^{m} u_i(x- P_{D_i}(x)) \right\|^{\frac{1}{\theta}} - \bar\kappa_2\eta_2\left[\sum_{i=1}^{m} |u_i|\|P_{D_i}(x) -{\bm\xi}_i\|\right]^{\frac{1}{\theta}}\\
&\geq  \bar\tau_2 \left\|\sum_{i=1}^{m} u_i(x- P_{D_i}(x)) \right\|^{\frac{1}{\theta}} - \bar\kappa_0\eta_2\sum_{i=1}^{m}\|P_{D_i}(x) -{\bm\xi}_i\|^{\frac{1}{\theta}},
\end{split}
\end{align}
where the second inequality follows from \cite[Lemma~3.1]{LiPong17} for some $\eta_2 \in (0,1)$ and $\bar\tau_2 > 0$ and the triangle inequality, and the last inequality follows from \eqref{ineq:u_i}, the definition of $\widehat{c}$ and the fact that $\bar\kappa_2 \le {\bar c}^{-\frac1\theta}\widehat{c}^{-1}\bar\kappa_0$.

Now, we derive a lower bound for the first term on the right hand side of \eqref{ineq:uandxi}. Let $\widehat M := \max\limits_{1\le i\le m}\max_{x\in {\bf B}(\bar x,\epsilon)}\|x - P_{D_i}(x)\|$. Choose $\bar \kappa_3 = \min\{2^{-\frac1\theta}\bar\tau_2,{(2\widehat M L)}^{-\frac1\theta}\widehat{c}^{-1}\bar\tau_1\}$ (with $L$, $\bar \tau_1$ and $\widehat{c}$ given in \eqref{con(c)}, \eqref{ineq:rhoumain} and \eqref{underlinec}, respectively), then we have
\begin{align}\label{mingzi}
\begin{aligned}
 &\bar\tau_2\left\|\sum_{i=1}^{m} u_i(x\!-\! P_{D_i}(x)) \right\|^{\frac{1}{\theta}} \!\!\!\!= \frac{\bar\tau_2}{2^\frac1\theta}\!\left\|\nabla \Psi(x) - 2\sum_{i=1}^{m} (u_i + \psi_+'(d^2_{D_i}(x)))(x\!-\! P_{D_i}(x)) \right\|^{\frac{1}{\theta}}\\
 &\ge \bar \kappa_3 \left\|\nabla \Psi(x) - 2\sum_{i=1}^{m} (u_i + \psi_+'(d^2_{D_i}(x)))(x- P_{D_i}(x)) \right\|^{\frac{1}{\theta}}\\
&\geq \bar \tau_3\|\nabla \Psi(x)\|^{\frac{1}{\theta}} - 2^\frac1\theta\bar \kappa_3\eta_3 \left(\sum_{i=1}^{m}|u_i + \psi_+'(d^2_{D_i}(x))|\cdot\|x- P_{D_i}(x)\| \right)^{\frac{1}{\theta}}\\
&\geq \bar \tau_3\|\nabla \Psi(x)\|^{\frac{1}{\theta}} - \bar \kappa_3\eta_3{(2\widehat M)}^\frac1\theta\widehat{c} \sum_{i=1}^{m}|u_i + \psi_+'(d^2_{D_i}(x))|^{\frac{1}{\theta}}\\
&\geq \bar \tau_3\|\nabla \Psi(x)\|^{\frac{1}{\theta}} - \bar \tau_1\eta_3\sum_{i=1}^{m}\left|(\rho^*)^{'}(u_i) - d^2_{D_i}(x)\right|^{\frac{1}{\theta}},
\end{aligned}
\end{align}
where the second inequality follows from \cite[Lemma~3.1]{LiPong17} for some $\eta_3 \in (0,1)$ and $\bar \tau_3 > 0$ and the triangle inequality, the third inequality follows from the definitions of $\widehat{c}$ and $\widehat M$, and the last inequality follows from \eqref{con(c)} and the fact that $\bar \kappa_3 \le {(2\widehat M L)}^{-\frac1\theta}\widehat{c}^{-1}\bar\tau_1$.

Combining \eqref{ineq:con}, \eqref{ineq:cone1}, \eqref{ineq:rhoumain}, \eqref{ineq:uandxi} and \eqref{mingzi} with the lower bound of the quantity $\dist^{\frac{1}{\theta}}(0, \partial \F(x,{\bm\xi}, u))$ in \eqref{ineq:dist1}, we see that 
\begin{align*}
  \begin{aligned}
  & c_3^{-1}\dist^{\frac{1}{\theta}}(0, \partial \F(x,{\bm\xi}, u))\\
  &\ge \bar \kappa_0\left(\sum_{i=1}^m\! \left(\|{\bm\xi}_i - x\|^2\! -\! d^2_{D_i}(x) \right)^{\frac{1}{\theta}}\!+ \!\!\sum_{i=1}^m\!\|P_{D_i}(x) -{\bm\xi}_i\|^{\frac{1}{\theta}}\right)\!+ \!c_1^{\frac{1}{2\theta}}\!\sum_{i=1}^m \left(\|{\bm\xi}_i - x\|^2\! - \!d^2_{D_i}(x) \right)\\
    & \ \ \ + \bar\tau_1\sum_{i=1}^m \left|(\rho^*)^{'}(u_i) - d^2_{D_i}(x)\right|^{\frac{1}{\theta}} - \bar \kappa_0\eta_1 \sum_{i=1}^m\left( \|x- {\bm\xi}_i\|^2 - d^2_{D_i}(x) \right)^{\frac{1}{\theta}}\\
    & \ \ \ + \bar \tau_3\|\nabla \Psi(x)\|^{\frac{1}{\theta}} - \bar \tau_1\eta_3\sum_{i=1}^{m}\left|(\rho^*)^{'}(u_i) - d^2_{D_i}(x)\right|^{\frac{1}{\theta}}- \bar\kappa_0\eta_2\sum_{i=1}^{m}\|P_{D_i}(x) -{\bm\xi}_i\|^{\frac{1}{\theta}}
  \end{aligned}
\end{align*}
Grouping like terms and noting that $\eta_1,\eta_2,\eta_3\in (0,1)$ and $\|{\bm\xi}_i - x\|^2 - d^2_{D_i}(x)\ge 0$ (thanks to \eqref{con(a)}), we see further that
\begin{align*}
  \begin{aligned}
  &c_3^{-1}\dist^{\frac{1}{\theta}}(0, \partial \F(x,{\bm\xi}, u))\\
   &\ge c_4\left[\eta \|\nabla \Psi(x)\|^{\frac{1}{\theta}} + \frac{L}2\sum_{i=1}^m \left|(\rho^*)^{'}(u_i) - d^2_{D_i}(x)\right|^{\frac{1}{\theta}} + \bar{c}\sum_{i=1}^m\left(\|{\bm\xi}_i - x\|^2 - d^2_{D_i}(x) \right)\right]\\
    & \ge c_4\left[\eta \|\nabla \Psi(x)\|^{\frac{1}{\theta}} + \frac{L}2\sum_{i=1}^m \left|(\rho^*)^{'}(u_i) - d^2_{D_i}(x)\right|^{\frac{1}{\theta}} + \sum_{i=1}^m(-u_i)\left(\|{\bm\xi}_i - x\|^2 - d^2_{D_i}(x) \right)\right]\\
    & \ge c_4\left[\Psi(x)- \Psi(\bar{x}) + \frac{L}{2} \sum_{i=1}^m \left|(\rho^*)^{'}(u_i) - d^2_{D_i}(x)\right|^{\frac{1}{\theta}}\!\!\! + \sum_{i=1}^m (-u_i)\left(\|{\bm\xi}_i - x\|^2 - d^2_{D_i}(x) \right)\right]\\
    & \ge c_4 \left(\F(x,{\bm\xi},u) - \F(\bar{x}, \bar{{\bm\xi}}, \bar{u})\right),
  \end{aligned}
\end{align*}
where: the first inequality holds for some $c_4 > 0$ upon a suitable scaling (recall that $\eta$ is given in \eqref{KLpsi}, $\bar c$ is defined in \eqref{ineq:u_i} and $L$ is given in \eqref{ineq:fandpsi}); the second inequality follows from \eqref{ineq:u_i}, the third inequality follows from \eqref{KLpsi}, and the last inequality follows from \eqref{ineq:fandpsi} and $\Psi(\bar{x}) = \F(\bar{x}, \bar{{\bm\xi}}, \bar{u})$, thanks to Lemma~\ref{lem:5.4} and the assumption that $0\in \partial \F(\bar x,\bar{\bm\xi},\bar u)$. This completes the proof.
\end{proof}

\section{Numerical test}\label{sec6}

In this section, we perform numerical experiments to study the performance of \EAS{} on some large-scale MFS$_C$ problems. All codes are written in Matlab, and the experiments are performed in  Matlab 2015b on a 64-bit PC with an Intel(R) Core(TM) i7-4790 CPU (3.60GHz) and 32GB of RAM.

In \EAS{}, we take $\varphi_{\epsilon_k}(s)=1-\frac{\log(s+\epsilon_k)}{\log\epsilon_k}$ with $\epsilon_k = 0.9(0.1)^{k-1}$, and terminate when $\epsilon_k \le 10^{-6}$. As for initialization, in our experiments below, we use randomly generated initial points $\tilde x_0$, which are projections onto $C$ of random vectors with i.i.d. standard Gaussian entries.

As discussed in Section~\ref{sec4}, the corresponding subproblems of \EAS{} take the form of \eqref{sum_subproblem_concave} with $\psi(s) = \psi_{\epsilon_k}(s) := \log(\epsilon_k + s) - \log (\epsilon_k)$, and we use \GP{} for solving them approximately. In the \GP{}, we pick $\alpha_{\text{min}}= 10^{-10}$, $\alpha_{\text{max}}= 10^{10}$, $\eta =1/2$, $M=9$ and $\sigma=10^{-4}$. We initialize the algorithm at $\tilde x^{k-1}$ for approximately minimizing $\Phi_{\epsilon_k} + \delta_C$, and terminate the algorithm when
$\|x^t-x^{t-1}\|\le \max\left\{\frac{10^{-5}}{3^{k-1}},10^{-7}\right\}\cdot\max\{1,\|x^t\|\}$.
As for $\alpha_t^0$, we initialize it at $\alpha_1^0 = 1$ and set, for $t \ge 1$,
\begin{equation*}
\alpha_t^0=
\begin{cases}
P_{[10^{-10},10^{10}]}\left(\frac{\|x^t - x^{t-1}\|^2}{\inner{x^t - x^{t-1}}{g^t - g^{t-1}}}\right)& {\rm if}\ \inner{x^t - x^{t-1}}{g^t - g^{t-1}} > 10^{-12},\\
P_{[10^{-10},10^{10}]}(2\alpha_{t-1}^0)     & {\rm otherwise};
\end{cases}
\end{equation*}
the choice of $\alpha^0_t$ is motivated by the renowned Barzilai-Borwein stepsize.

We apply \EAS{} as described above to two classes of MFS$_C$ problems. In the first class, we set
\[
C := \{x\in [-r,r]^n:\; \|x\|_0 \le s\}
\]
and $D_i = \{x\in \R^n:\; \langle {\bm a}_i,x\rangle\le b_i\}$, where ${\bm a}_i$ is the $i$th row of an $A\in \R^{m\times n}$. For the second class, we consider the same $C$ as above but we choose
$D_i = \{x\in \R^n:\; \langle {\bm a}_i,x\rangle\le b_i\}\cup \{x\in \R^n:\; \langle {\bm p}_i,x\rangle\le q_i\}$, where ${\bm a}_i$ and ${\bm p}_i$ are the $i$th row of $A\in \R^{m\times n}$ and $P\in \R^{m\times n}$, respectively. The matrices $A$, $P$ and vectors $b$ and $q$ are randomly generated as follows. We first randomly generate $A$ and $P$ to have i.i.d. standard Gaussian entries. We next generate a $\tilde w\in \R^n$ with $s$ i.i.d. standard Gaussian entries at random positions, and project $\tilde w$ onto $[-r,r]^n$ to form $w$. Set $\tilde b = Aw$ and fix a real number $\bar p \in [0,1]$. We then define $b\in \R^n$ by
\[
b_i = \begin{cases}
  \tilde b_i + 0.01 \varepsilon_i & \mbox{ if }i \le \lceil \bar p m\rceil,\\
  \tilde b_i - 50 \varepsilon_i & \mbox{ otherwise},
\end{cases}
\]
where $\varepsilon_i$ are chosen uniformly at random from $[0,1]$. Finally, we set $q = Pw - 50 \iota$ for some random vector $\iota\in [0,1]^m$. By construction, the system $\{C,D_1,\ldots,D_{\lceil \bar p m\rceil}\}$ is feasible and hence the optimal value of \eqref{MFSC} is at least $\lceil \bar p m\rceil$ for these problems. Moreover, the vector $w\in C$ is not in $C\cap \bigcap_{i=1}^m D_i$. Furthermore, the resulting system $\{C,D_1,\ldots,D_m\}$ is conceivably infeasible because of the subtractions of $50\varepsilon$ and $50\iota$.

In our experiments below, for the two classes of problems, we consider $m = 3000$, $5000$, $n = \frac{m}{5}$, $s = \frac{n}{5}$, $\bar p = 0.5$, $0.6$ and $0.7$, and $r = 10^8$. For each class of problems, for each $m$ and $\bar p$, we randomly generate $5$ instances as described above. For each instance, we solve the corresponding MFS$_C$ problem using \EAS{} from $5$ random initial points.\footnote{These are projections onto $C$ of random vectors with i.i.d. standard Gaussian entries. A projection onto $C$ can be computed efficiently according to \cite[Proposition~3.1]{Lu-Zhang 12}.} We report the number of iterations (iter) and the CPU time in seconds (CPU) in Table~\ref{t1}, averaged over the $5$ random initializations and the $5$ instances. We also use the following quantities to evaluate the performance of our algorithm:
\begin{itemize}
  \item feas$(x)$: For a given $x\in \R^n$, this corresponds to $\frac1m\#\{i:\; x\in D_i\}$.
  \item $\epsilon$-feas$(x)$: For a given $x\in \R^n$, this corresponds to $\frac1m\#\{i:\; \langle {\bm a}_i,x\rangle < b_i + \frac{10^{-5}m}4\}$ when each $D_i$ is a halfspace, and corresponds to
  $\frac1m\#\{i:\; \min\{\langle {\bm a}_i,x\rangle- b_i,\langle {\bm p}_i,x\rangle- q_i\} \!<\! \frac{10^{-5}m}4\}$ when each $D_i$ is a union of two halfspaces.
\end{itemize}
For each of the $5$ random instances, we take the maximum of feas$(x_0)$ and $\epsilon$-feas$(x_0)$ and the maximum of feas$(x_*)$ and $\epsilon$-feas$(x_*)$  over $5$ random initial points $x_0$, where $x_*$ is the approximate solution returned by our algorithm. We report in Table~\ref{t1} the average of these quantities over the $5$ random instances under the columns feas$_0$, $\epsilon$-feas$_0$, feas$_*$ and $\epsilon$-feas$_*$. One can see that our approach is able to identify a reasonably large (approximately) feasible subsystem with respect to $C$ (i.e., $\epsilon$-feas$_*\gtrsim\bar p$) in a reasonable period of time, even for large-scale problems. Moreover, we always obtain a larger feasible subsystem compared with that identified by the random initial points. Finally, we also observe that our algorithm is faster when each $D_i$ is convex.

\begin{table}[h]
\caption{Computational results for randomly generated MFS$_C$ problems}\label{t1}
\begin{center}
\begin{tabular}{|c|cc||cc|cc|cc|}\hline
  & $m$  & $\bar p$  & iter  & CPU  & feas$_0$   &$\epsilon$-feas$_0$ & feas$_*$   &$\epsilon$-feas$_*$ \\\hline
  \multirow{6}{*}{convex $D_i$}   &      & 0.5 &        8156 &   223.7 & 0.34 & 0.34 & 0.58 & 0.61   \\
                                  & 3000 & 0.6 &        2585 &    73.8 & 0.38 & 0.38 & 0.50 & 0.59   \\
                                  &      & 0.7 &         214 &     4.8 & 0.41 & 0.41 & 0.58 & 0.70   \\\cline{2-9}
                                  &      & 0.5 &        9638 &   807.1 & 0.37 & 0.37 & 0.56 & 0.59   \\
                                  & 5000 & 0.6 &        4107 &   386.7 & 0.39 & 0.39 & 0.51 & 0.59   \\
                                  &      & 0.7 &         246 &    15.3 & 0.42 & 0.42 & 0.57 & 0.70   \\\hline
  \multirow{6}{*}{Nonconvex $D_i$}&      & 0.5 &        9867 &   504.2 & 0.47 & 0.47 & 0.85 & 0.89 \\
                                  & 3000 & 0.6 &        9469 &   482.1 & 0.49 & 0.49 & 0.84 & 0.87 \\
                                  &      & 0.7 &       10372 &   541.0 & 0.52 & 0.52 & 0.84 & 0.87 \\\cline{2-9}
                                  &      & 0.5 &       14897 &  2268.3 & 0.51 & 0.51 & 0.85 & 0.89 \\
                                  & 5000 & 0.6 &       13637 &  2077.6 & 0.53 & 0.53 & 0.84 & 0.88 \\
                                  &      & 0.7 &       13164 &  2039.3 & 0.56 & 0.56 & 0.83 & 0.86 \\\hline
 \end{tabular}
\end{center}
\end{table}


\begin{thebibliography}{Siam}
\bibitem{Amaldi1995}
 E. Amaldi and V. Kann,
\newblock The complexity and approximability of finding maximum feasible subsystems of linear relations,
\newblock {\em Theor. Comput. Sci.}, 147:181--210, 1995.

\bibitem{Amaldi1998}
E. Amaldi and V. Kann,
\newblock On the approximability of minimizing nonzero variables or unsatisfied relations in linear systems,
\newblock {\em Theor. Comput. Sci.}, 209:237--260, 1998.

\bibitem{Amaldi1999}
 E. Amaldi, M. E. Pfetsch and L. E. Trotter. Jr,
 \newblock Some structural and algorithmic properties of the maximum feasible subsystem problem,
  Integer Programming and Combinatorial Optimization  Book Series: Lecture Notes in Computer Science. 1610:45--59, 1999.

\bibitem{Amaldi03}
E. Amaldi, M. E. Pfetsch and L. E. Trotter. Jr,
\newblock On the maximum feasible subsystem problem, IISs and IIS-hypergraphs,
\newblock {\em Math. Program.}, 95:533--554, 2003.




\bibitem{AttBolt09}
  H. Attouch and J. Bolte,
  \newblock On the convergence of the proximal algorithm for nonsmooth functions involving analytic features,
  \newblock {\em Math. Program.}, 116:5--16, 2009.

\bibitem{AtBoReSo10}
  H. Attouch, J. Bolte, P. Redont and A. Soubeyran,
  \newblock Proximal alternating minimization and projection methods for nonconvex problems. An approach based on the Kurdyka-Lojasiewicz inequality,
  \newblock {\em Math. Oper. Res.}, 35:438--457, 2010.


\bibitem{Attouch-Bolte13}
  H. Attouch, J. Bolte and B. F. Svaiter,
  \newblock Convergence of descent methods for semi-algebraic and tame problems: proximal algorithms, forward-backward splitting, and regularized Gauss-Seidel methods,
  \newblock {\em Math. Program.}, 137:91--129, 2013.


\bibitem{Bauschke_Borwein96}
  H. H. Bauschke,
  \newblock {\em Projection Algorithms and Monotone Operators},
  \newblock PhD thesis, Simon Fraser University, 1996.


  \bibitem{Bertsekas_99}
   D. P. Bertsekas,
   \newblock {\em Nonlinear Programming},
   \newblock 2nd edition, Athena Scientific, 1999.


  \bibitem{BolteSabach14}
  J. Bolte, S. Sabach and M. Teboulle,
  \newblock Proximal alternating linearized minimization for nonconvex and nonsmooth problems,
  \newblock {\em Math. Program.}, 146:459--494, 2014.

  \bibitem{chakravarti94}
  N. Chakravarti,
  \newblock Some results concerning post-infeasibility analysis,
  \newblock {\em Eur. J. Oper. Res.}, 73:139--143, 1994.

\bibitem{Chinneck96}
 J. W. Chinneck,
\newblock An effective polynomial-time heuristic for the minimum-cardinality IIS setcovering problem,
\newblock {\em Ann. Math. Artif. Intell.}, 17:127--144, 1996.

\bibitem{Chinneck01}
J. W. Chinneck,
\newblock Fast heuristics for the maximum feasible subsystem problem,
\newblock {\em INFORMS J. Comput.}, 13:210--223, 2001.

\bibitem{Greenberg91}
H. J. Greenberg and F. H. Murphy,
\newblock Approaches to diagnosing infeasible linear programs,
\newblock {\em ORSA J. Comput.}, 3:253--261, 1991.

  \bibitem{Lewis-Luke09}
  A. S. Lewis, D. R. Luke and J. Malick,
  \newblock  Local linear convergence for alternating and averaged nonconvex projections,
  \newblock {\em Found. Comput. Math.}, 9:485--513, 2009.

  \bibitem{LiPong17}
  G. Li and T. K. Pong,
  \newblock Calculus of the exponent of Kurdyka-$\L$ojasiewicz inequality and its applications to linear convergence of first-order methods,
  \newblock To appear in {\em Found. Comput. Math.},  https://doi.org/10.1007/s10208-017-9366-8.

  \bibitem{Lu-Zhang 12}
  Z. S. Lu and Y. Zhang,
  \newblock Sparse approximation via penalty decomposition methods,
  \newblock {\em SIAM J. Optim.}, 23:2448--2478, 2012.

  \bibitem{Morduk06}
  B. S. Mordukhovich.
  \newblock {\em Variational Analysis and Generalized Differentiation I: Basic Theory},
  \newblock Springer, 2016.

  \bibitem{Parker95}
  M. Parker,
  \newblock {\em A Set Covering Approach to Infeasibility Analysis of Linear Programming Problems and Related Issues},
  \newblock Ph.D. thesis, University of Colorado at Denver, 1995.

\bibitem{PRyan96}
  M. Parker and J. Ryan,
  \newblock Finding the minimum weight IIS cover of an infeasible system of linear inequalities,
  \newblock {\em Ann. Math. Artif. Intell.}, 17:107--126, 1996.

\bibitem{Pfetsch02}
  M. E. Pfetsch,
  \newblock {\em The Maximum Feasible Subsystem Problem and Vertex-Facet Incidence of Polyhedra},
  \newblock {Ph.D. thesis}, TU Berlin, Berlin, 2002.

\bibitem{Pfetsch08}
  M. E. Pfetsch,
  \newblock Branch-and-cut for the maximum feasible subsystem problem,
  \newblock {\em SIAM J. Optim.}, 19:21--38, 2008.


  \bibitem{Roc70}
  R. T. Rockafellar,
  \newblock {\em  Convex Analysis},
  \newblock Princeton University Press, 1970.

  \bibitem{Rock_wets98}
  R. T. Rockafellar and R. J.-B. Wets,
  \newblock {\em Variational Analysis},
  \newblock Springer, 1998.

  \bibitem{Wright_nowak09}
   S. J. Wright, R. D. Nowak and M. A. T. Figueiredo,
   \newblock Sparse reconstruction by separable approximation,
    \newblock {\em IEEE Trans. Signal Process.}, 57:2479--2493, 2009.



  \bibitem{Zhao12}
  Y. B. Zhao and  D. Li,
  \newblock Reweighted $\ell_1$-minimization for sparse solutions to underdetermined linear systems,
  \newblock {\em SIAM J. Optim.}, 22:1065--1088, 2012.

\end{thebibliography}
\end{document}